\let\oldfootnotemark\footnotemark
\let\oldfootnotetext\footnotetext
\let\oldfootnote\footnote
\renewcommand\footnote[1]{\addtocounter{footnote}{1}\hypertarget{fnbackref.\arabic{footnote}}{}\addtocounter{footnote}{-1}\oldfootnote{#1\fnbackref}}
\renewcommand\footnotemark{\addtocounter{footnote}{1}\hypertarget{fnbackref.\arabic{footnote}}{}\addtocounter{footnote}{-1}\oldfootnotemark}
\renewcommand\footnotetext[1]{\oldfootnotetext{#1\fnbackref}}
\newcommand{\fnbackref}{\hyperlink{fnbackref.\arabic{footnote}}{\footnotesize$\uparrow$}}
\newcommand{\sss}{\@startsection{subsubsection}{2}{0pt}{-3ex
plus -1ex minus -0.2ex}{-2mm plus -0pt minus
-2pt}{\normalfont\bfseries}} \makeatother
\let\oldmarginpar\marginpar
\renewcommand\marginpar[1]{\mbox{}\oldmarginpar[\raggedleft\hspace{0pt}\textcolor{Red}{\footnotesize #1}]%
{\raggedright\hspace{0pt}\textcolor{Red}{\footnotesize #1}}}
\theoremstyle{plain}
\newtheorem{theorem}{Theorem}[subsection]
\newtheorem{rconjecture}[theorem]{Conjecture}
\newtheorem{prop}[theorem]{Proposition}
\newtheorem{lemma}[theorem]{Lemma}
\newtheorem{corollary}[theorem]{Corollary}
\theoremstyle{definition}
\newtheorem{defn}[theorem]{Definition}
\newtheorem{constr}[theorem]{Construction}
\newtheorem{remark}[theorem]{Remark}
\newtheorem{example}[theorem]{Example}
\newtheorem{na}[theorem]{}
\par\vspace{2pt}\noindent\fbox{\vbox{\vspace{.25pt}\TheSbox\vspace{.25pt}}}}
\par\vspace{2pt}\noindent\fbox{\vbox{\vspace{.25pt}\TheSbox\vspace{.25pt}}}}
\newtheorem{rprob}{Problem}
\par\vspace{2pt}\noindent\fbox{\vbox{\vspace{.25pt}\TheSbox\vspace{.25pt}}}}
\newcommand{\isom}{\simeq}           %Not sure if I like the current distinctions
\newcommand{\eqdef}{\overset{\text{def}}{=}}
\newcommand{\dual}{{_\vee}}
\newcommand{\F}{\mathscr{F}}
\renewcommand{\L}{\mathscr{L}}
\newcommand{\Y}{\mathscr{Y}}
\newcommand{\B}{\mathcal{B}}
\renewcommand{\O}{\mathcal{O}}
\renewcommand{\AA}{\mathbb{A}}
\newcommand{\CC}{\mathbb{C}}
\newcommand{\GG}{\mathbb{G}}
\newcommand{\LL}{\mathbb{L}}
\newcommand{\PP}{\mathbb{P}}
\newcommand{\QQ}{\mathbb{Q}}
\newcommand{\ZZ}{\mathbb{Z}}
\newcommand{\mm}{\mathfrak{m}}
\newcommand{\Card}[1]{\##1}
\newcommand{\ol}[1]{\overline{#1}}
\newcommand{\oh}[1]{\widehat{#1}}
\newcommand{\sq}[1]{\widetilde{#1}}
\newcommand{\ip}[2]{\left\langle#1,#2\right\rangle}
\newcommand{\res}[2]{\left. #1 \right|_{#2}}
\newcommand{\adjunct}[2]{\xymatrix@1{ #1 \ar@<.7ex>[r] & \ar@<.7ex>[l] #2 }}
\newcommand{\ssetr}[2]{\xymatrix@1{ #1 \ar[r] & \ar@<.7ex>[l] #2 \ar@<-.7ex>[l] \ar@<.7ex>[r] \ar@<-.7ex>[r] & \cdots \ar@<1.4ex>[l] \ar@<-1.4ex>[l] \ar[l] }}
\newcommand{\ssetl}[2]{\xymatrix@1{ \cdots \ar@<1.4ex>[r] \ar@<-1.4ex>[r] \ar[r] & #2 \ar@<.7ex>[r] \ar@<-.7ex>[r] \ar@<.7ex>[l] \ar@<-.7ex>[l] & #1 \ar[l]}}
\newcommand{\ssetrr}[3]{\xymatrix@1{ #1 \ar[r] & #2 \ar@<.7ex>[l] \ar@<-.7ex>[l] \ar@<.7ex>[r] \ar@<-.7ex>[r] & #3 \ar@<1.4ex>[l] \ar@<-1.4ex>[l] \ar[l]  \ar@<1.4ex>[r] \ar@<-1.4ex>[r] \ar[r] & \cdots\ar@<.7ex>[l] \ar@<-.7ex>[l] \ar@<2.1ex>[l] \ar@<-2.1ex>[l]}}
\newcommand{\ssetll}[3]{\xymatrix@1{ \cdots\ar@<.7ex>[r] \ar@<-.7ex>[r] \ar@<2.1ex>[r] \ar@<-2.1ex>[r] & #3 \ar@<1.4ex>[r] \ar@<-1.4ex>[r] \ar[r]  \ar@<1.4ex>[l] \ar@<-1.4ex>[l] \ar[l] & #2 \ar@<.7ex>[r] \ar@<-.7ex>[r] \ar@<.7ex>[l] \ar@<-.7ex>[l] & #1 \ar[l]}}
\newcommand{\GL}{\operatorname{GL}}
\DeclareMathOperator{\id}{id}
\DeclareMathOperator{\im}{im}
\DeclareMathOperator{\End}{End}
\DeclareMathOperator{\Hom}{Hom}
\DeclareMathOperator{\Sym}{Sym}
\DeclareMathOperator{\Spec}{Spec}
\newcommand{\pt}{\ensuremath{\text{pt}}}
\newcommand{\ps}[1]{[\![#1]\!]}
\renewcommand{\c}{\mathbf{c}}
\newcommand{\ch}{\mathrm{ch}}
\newcommand{\CH}{\mathrm{CH}}
\newcommand{\Sm}{\mathrm{Sm}}
\newcommand{\Sch}{\mathrm{Sch}}
\newcommand{\MU}{\mathrm{MU}}
\newcommand{\MGL}{\mathrm{MGL}}
\renewcommand{\P}{\mathscr{P}}
\newcommand{\cN}{\mathcal{cN}}
\newcommand{\N}{\mathcal{N}}
\newcommand{\bB}{\mathbf{B}}
\newcommand{\bN}{\mathbf{N}}
\newcommand{\bT}{\mathbf{T}}
\newcommand{\bG}{\mathbf{G}}
\newcommand{\bH}{\mathbf{H}}
\newcommand{\bGLr}{\mathbf{\GL_r}}
\DeclareMathOperator{\ind}{ind}
\DeclareMathOperator{\Cycle}{Cycle}
\newcommand{\omegaBT}{\omega_{*,\bT}}
\newcommand{\omegaBG}{\omega_{*,\bG}}
\newcommand{\MUBG}{\MU_*(B\bG(\CC))}
\newcommand{\otg}{[\tfrac{1}{\tau_\bG}]}
\begin{document}
\title{Algebraic cobordism of varieties with $\bG$-bundles}
\author{Anatoly Preygel\\\today}
\maketitle
\begin{abstract}Lee and Pandharipande studied a ``double point'' algebraic cobordism theory of varieties equipped with vector bundles, and speculated that some features of that story might extend to the case of varieties with principal $\bG$-bundles.  This note shows that this expectation holds rationally, and more generally after inverting the torsion index of the group, for reductive $\bG$.  We show that (after inverting the torsion index) the full theory for bundles on varieties is an extension of scalars of standard algebraic cobordism, that the theory for a point is dual to $\Omega^*(B\bG)$, and describe how the theory for $\bG$ compares to that for a maximal torus $\bT$.
\end{abstract}

\section{Introduction}  
The central role of complex cobordism $MU$ in algebraic topology is partially due to its multiple incarnations.  The following history of \emph{algebraic} cobordism reflects this:
\begin{itemize} \item Voevodsky defined bi-graded \emph{motivic cobordism} groups $\MGL^{*,*}(X)$ for smooth $k$-schemes $X$ as the (motivic) representable cohomology theory associated to the Thom ($\PP^1$-)spectrum $\MGL$.  Just as the $(2n,n)$ piece of motivic cohomology admits a geometric description given by the classical Chow groups $\CH^n(X) \isom H^{2n,n}(X)$, so it is for motivic cobordism:
  \item In \cite{LM}, Levine and Morel constructed \emph{algebraic cobordism} $\Omega^*(X)$ as a universal ``oriented'' cohomology sense in the style of Quillen's treatment of complex cobordism via formal groups laws.  Conditional on unpublished work of Hopkins-Morel, Levine proved $\Omega^n(X) \isom \MGL^{2n,n}(X)$ so that this did in fact capture geometrically the appropriate piece of motivic cobordism.
  \item In \cite{LP1}, motivated by applications to Donaldson-Thomas theory, Levine and Pandharipande were able to give a purely geometric presentation, in the spirit of the construction from which complex co\emph{bordism} draws its name. This construction of $\omega_*(X)$ removed the need to algebraically insert the formal group law satisfied by $c_1$ and relied instead on geometric ``double point'' relations. These relations geometrically encode the formal group law in the simplest model for an equality $[C]=[A]+[B]$ of divisor classes: a smooth divisor $C$ rationally equivalent to the union of smooth divisors $A$ and $B$ intersecting transversely.
\end{itemize}

In \cite{LP}, also motivated by enumerative applications, Lee and Pandharipande defined an analogous ``double point'' cobordism theory for vector bundles, and for lists of line bundles, on varieties. They computed ``$\omega_{*,\bGLr}(X)$'' in terms of $\omega_*(X)$ via the following Kunneth-type theorem:
%But, first the pre-story: Voevodsky defined the motivic cobordism $\PP^1$-spectrum $\MGL$ and consequently the bi-graded motivic cobordism groups $\MGL^{*,*}(X)$ for smooth $k$-schemes $X$.  Just as the ``$(2n,n)$'' piece of motivic cohomology admits a geometric description, given by the classical Chow groups $\CH^n(X) \isom H^{2n,n}(X)$, so it is for the $(2n,n)$ piece of motivic cobordism: Levine \& Morel were able to give a geometric construction of this piece, \emph{algebraic cobordism} $\Omega^*(X) \isom \MGL^{2n,n}(X)$ (though historical accuracy requires us to remark that the proof of this isomorphism came a bit later).  Motivated by applications to Donaldson-Thomas theory, Levine \& Pandharipande were able to give an even more geometric presentation, removing the need to algebraically insert the formal group law satisfied by $c_1$ and instead obtaining it as a consequence of a geometric ``double point'' relation, taking into account the simple singularities that occur when smooth divisors intersect transversely.  And in \cite{LP}, Lee \& Pandharipande define an analogous ``double point'' cobordism theory for vector bundles on varieties and the results of that paper imply the following:
\begin{theorem}[Lee-Pandharipande]\label{thm:LP}  Let $\bG$ be one of $\mathbf{GL_{r}}$ or $\GG_m^r$ and denote by $\omega_{*,\bG}(X)$ the double point cobordism theory of $\bG$ bundles (see $\S$\ref{ss:algcob} for a precise definition.)   Set $\omega_{*,\bG} = \omega_{*,\bG}(\pt)$ and $\LL = \omega_{*}(\pt)$. Then, 
  \begin{enumerate}
    \item The natural map
      \[\gamma^X_{\bG}: \omega_*(X) \otimes_{\LL} \omega_{*,\bG} \longrightarrow \omega_{*,\bG}(X) \] 
      \[ [\pi: Y \to X] \otimes [Y', \P] \longmapsto [\pi \circ p_1: Y \times Y' \to X, p_2^* \P] \] 
      is an isomorphism of $\LL$-modules
    \item Suppose (for convenience) that $k \subset \CC$.  Then, the natural map (see Construction~\ref{constr:vartheta})
      \[ \vartheta: \omega_{*,\bG} \to \MUBG \] is an isomorphism of $\LL$-modules. (Recall that the natural map $\LL \to \MU_*(\pt)$ is known to be an isomorphism.)
    \item Suppose $\bG = \mathbf{GL_{r}}$ and set $\bT = \GG_m^r \subset \bG$.  Then, the natural map $\omega_{*,\bT}/S_r \to \omega_{*,\bG}$ is an isomorphism of $\LL$-modules.
  \end{enumerate}
\end{theorem}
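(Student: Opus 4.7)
The plan is to establish (i) first, then derive (ii) by a rank-and-basis comparison that leverages the Quillen identification $\LL \isom \MU_*(\pt)$, and finally derive (iii) from (ii) together with the topological description of $\MU_*(B\bGLr(\CC))$ as the $S_r$-coinvariants of $\MU_*(B\bT(\CC))$.

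For (i), the core case is $\bG = \GG_m$, where a class $[\pi\colon Y \to X, L]$ is presented by a single line bundle. The idea is to generate a double point relation from $L$ itself. I would pick an auxiliary very ample line bundle $A$ on $Y$ and a generic section $s$ of $L \otimes A$ whose zero locus $D \subset Y$ is smooth, and then run the deformation of $Y$ to $Y \cup_D \PP(L|_D \oplus \O_D)$ (the deformation to the projective completion of the normal bundle of $D$). Carrying $L$ coherently through this degeneration produces a double point relation that expresses $[\pi, L \otimes A]$ in terms of $[\pi, L]$, the restriction $[\pi|_D, L|_D]$, and a class supported on a $\PP^1$-bundle over $D$. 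Filtering by the dimension of the support and iterating, every generator reduces to a product of a class in $\omega_*(X)$ with a class in $\omega_{*,\GG_m}(\pt)$, which gives surjectivity of $\gamma^X_{\GG_m}$; injectivity follows by running the same degenerations on the tensor product side. For $\bG = \GG_m^r$ I would peel off factors one at a time and iterate this argument, and for $\bG = \bGLr$ I would pass to the full flag bundle of the tautological rank-$r$ bundle (an iterated projective bundle, hence cobordism-theoretically controlled) to reduce to the torus case.

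Granting (i), part (ii) should follow by checking that $\vartheta$ sends an explicit $\LL$-basis of $\omega_{*,\bG}$, coming from generators supported on Grassmannians or on $(\PP^\infty)^r$, to the standard $\MU_*(\pt)$-basis of $\MUBG$ (monomials dual to Chern classes in the $\bGLr$ case, products of Chern roots of the universal line bundles in the torus case). Both sides are free of the same rank in each degree for these $\bG$, so matching bases suffices. For (iii), surjectivity of $\omega_{*,\bT}/S_r \to \omega_{*,\bG}$ is the splitting principle: pulling a $\bGLr$-bundle back along its flag bundle is a cobordism along a tower of projective bundles that realizes the bundle as a sum of line bundles, with residual $S_r$-action coming from the Weyl group. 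Injectivity is then read off from (ii) via the topological identification $\MU_*(B\bGLr(\CC)) = \MU_*(B\bT(\CC))_{S_r}$, which holds because $\MU^*(B\bGLr(\CC)) = \MU^*(B\bT(\CC))^{S_r}$ is free of the expected rank over $\MU^*(\pt)$.

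The main obstacle is the degeneration argument in (i): one must arrange not merely a double point degeneration of varieties but one that carries the bundle data coherently, so that the resulting identity actually lives in $\omega_{*,\bG}(X)$ rather than only in $\omega_*(X)$, and verify that the bundle on the total space of the degeneration restricts correctly to both components of the special fiber. Once this bundle-enhanced double point formalism is set up and a sufficiently flexible generic-smoothness statement for sections of $L \otimes A$ is in place, the remainder is a rank-and-dimension induction that runs in parallel to the arguments underlying Levine-Pandharipande's presentation of $\omega_*$.
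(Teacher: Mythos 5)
The paper does not reprove Theorem~\ref{thm:LP} --- it is quoted from \cite{LP} --- so the only in-paper material to compare with is the one-line outline (torus first, then $\mathbf{GL_r}$ via the diagonal torus and its conjugation $S_r$-action) and Lemma~\ref{lem:bt}, which records what is actually imported: the generators $p_{m_1,\ldots,m_r}$ of $\omega_{*,\bT}$ and the perfect pairing $\ip{-}{-}_\bT$ with $\LL\ps{\oh{\bT}}$. Your high-level plan (torus, then flag bundle, explicit bases, topological comparison, coinvariants) agrees with this architecture.

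However, the two load-bearing steps in your argument for (i) do not work as written. The degeneration you describe is deformation to the normal cone of $D = Z(s) \subset Y$ for $s \in H^0(L \otimes A)$. Its general fiber is $Y$, its special fiber is $Y \cup_D \PP_D(\N_{D/Y} \oplus \O_D)$ with $\N_{D/Y} = (L\otimes A)|_D$ (not $L|_D$), and the only line bundle you have named on the total space is (the pullback of) $L$, which restricts to $L$ on both $Y$-copies and to $p^*(L|_D)$ on the exceptional component. The resulting double-point relation is then $[Y,L] = [Y,L] + [\PP_D(\N\oplus\O), p^*(L|_D)] - [\PP_D(\N), p^*(L|_D)]$, which is vacuous because the last two classes coincide. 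To produce a nontrivial identity one must twist the line bundle on the total space by a multiple of the exceptional divisor and then record its distinct restrictions to the strict transform and the exceptional component; that twist is precisely where the formal group law enters, and it yields a different identity than the one you state. As written, the ``reduce by dimension of support'' induction has no engine.

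Second, your injectivity step for (i) --- ``running the same degenerations on the tensor product side'' --- is not an argument: double-point relations lie in the kernel of $\gamma$ by construction, so the issue is to show nothing else does. The proof in \cite{LP}, reformulated here as Lemma~\ref{lem:bt}(ii),(iv) and reused verbatim in the paper's injectivity argument for Theorem~\ref{thm:coinv}, detects classes with Chern operations: one proves $\ip{-}{-}_\bT$ is a perfect pairing by a block-triangular computation on the $p_{m_1,\ldots,m_r}$, and then uses the resulting duality to separate the image of $\gamma$. Your rank-matching plan for (ii) and your coinvariants plan for (iii) (splitting principle for surjectivity, $\MU_*(B\bT(\CC))_{S_r} \isom \MU_*(B\mathbf{GL_r}(\CC))$ for injectivity) are sound in spirit once such a pairing is available and once one checks that $\vartheta$ intertwines $\ind_\bT^\bG$ with the topological induction map.
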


One can easily define a ``double point'' cobordism theory $\omega_{*,\bG}(-)$ for more general principal $\bG$-bundles on varieties.  The authors of \cite{LP} were thus led to the following speculation: Is $\omega_{*,\bG}(\pt)$ dual to $\MU^*(B\bG)$  (or rather, the closely related $\Omega^*(B\bG)$) for classical groups $\bG$?  More generally, one can ask to what degree Theorem~\ref{thm:LP} remains true for more general groups $\bG$.

The goal of this note is to show how the methods and results of \cite{LP} extend to imply an analog of Theorem~\ref{thm:LP} \emph{rationally} for \emph{any} reductive algebraic group $\bG$.  One can also give a precise description of what happens \emph{integrally} for \emph{special} groups like $\mathbf{SL_n}$ and $\mathbf{SP_n}$ (or more generally, after inverting certain \emph{torsion primes}).

In addition to the abstract statements above, \cite{LP} obtained explicit geometric bases for $\omega_{*, \GG_m^r}$ and $\omega_{*,\mathbf{GL_r}}$.  Unfortunately, this step does not immediately generalize for a simple algebraic reason which is already visible in comparing the situations in ordinary cohomology of $\mathbf{GL_n}$ and $\mathbf{SL_n}$.  The formal structure is similar in both situations: If $\bT \subset \bG$ is the maximal torus, $W$ the Weyl group, and $\oh{\bT}$ the character lattice of $\bT$, then $H^*(B\bT) \isom \Sym \oh{\bT}$ and $H^*(B\bG) \isom (\Sym \oh{\bT})^W$ in these cases.  But the nature of the Weyl action is different: For $\mathbf{GL_n}$, if we fix a basis $\lambda_1,\ldots,\lambda_n$ for $\oh{\bT}$, then $W$ acts on this basis as a set.  For $\mathbf{SL_n}$, if we fix a basis $\lambda_1,\ldots,\lambda_{n-1}$ for $\oh{\bT}$, then some elements $\sigma \in W$ will act as, e.g., $\sigma \lambda_1 = -\lambda_1$.

%Now we explain how this causes trouble:
%\begin{enumerate}
%  \item For $G = \mathbf{GL_n}$: We know an $\LL$-basis for $\omega_{*,\bT}$ and $\Sigma_n = W_{\mathbf{GL_n}}$ acts by permuting this basis.  This gives an $\LL$-basis for $\omega_{*,\bT}/W$ by simply forming the $W$-orbits of basis elements.
%  \item For $G = \mathbf{SL_n}$: We know an $\LL$-basis for $\omega_{*,\bT}$ and $\Sigma_n = W_{\mathbf{SL_n}}$ acts by permutations \emph{and} by more complicated versions of a sign change, e.g.,
%    \begin{align*} p_{m_1,\ldots,m_{n-1}} &= \left[\PP^{m_1} \times \PP^{m_{n-1}}, \left(\O_{\PP^{m_1}}(1), \ldots, \O_{\PP^{m_{n-1}}}(1)\right)\right] \longmapsto \\& \left[\PP^{m_1} \times \PP^{m_{n-1}}, \left(\O_{\PP^{m_1}}(-1), \ldots, \O_{\PP^{m_{n-1}}}(1)\right)\right] = - p_{m_1,\ldots,m_{n-1}} + \text{correction terms}  \end{align*} where the correction terms can be computed in terms of the formal group law, and lie in $\LL_{\geq 1} \omega_{*,\bT}$.  Then $\omega_{*,\bT}/W$ doesn't have a nice basis coming from $W$-orbits.  (Or any basis.  It's not free since $\omega_{*,\bT}/W \otimes_{\LL} \ZZ$ will have $2$-torsion by the above.)  It will turn out that $\omega_{*,\bG} = (\omega_{*,\bT}/W)/\text{torsion}$.
%\end{enumerate}

\subsection{Rational results}
\begin{na} We work over an algebraically closed field $k$ of characteristic zero.
\end{na}

\begin{theorem}[Main Theorem]\label{thm:main}  Let $\bG$ be a reductive algebraic group and denote by $\omega_{*,\bG}(X)$ the double point cobordism theory of $\bG$ bundles (see $\S$\ref{ss:algcob} for a precise definition).  Set $\omega_{*,\bG} = \omega_{*,\bG}(\pt)$ and $\LL = \omega_{*}(\pt)$. Then, 
  \begin{enumerate}
    \item The natural map
      \[ \gamma^X_{\bG}: \omega_*(X) \otimes_{\LL} \omega_{*,\bG} \longrightarrow \omega_{*,\bG}(X) \] 
      \[ [\pi: Y \to X] \otimes [Y', \P] \longmapsto [\pi \circ p_1: Y \times Y' \to X, p_2^* \P] \] 
      induces an isomorphism $(\gamma^X_{\bG})_{\QQ}$ of $\LL_{\QQ}$-modules upon tensoring $- \otimes_{\ZZ} \QQ$.
    \item If $k \subset \CC$, natural map  (see Construction~\ref{constr:vartheta})
      \[ \vartheta_\bG: \omega_{*,\bG} \longrightarrow \MUBG \] induces an isomorphism $\vartheta_{\QQ}$ of $\LL_{\QQ}$-modules upon tensoring $- \otimes_{\ZZ} \QQ$.
  \end{enumerate}
\end{theorem}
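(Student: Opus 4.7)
The plan is to reduce to the case $\bG = \bT$ of a torus, where Theorem~\ref{thm:LP} already gives the statement, and combine this with a comparison using the Weyl group $W$. The heart of the argument identifies $\omega_{*,\bG}$ rationally with the $W$-invariants of $\omega_{*,\bT}$, via natural maps induced by extension of structure group and by the flag bundle.

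I construct two natural maps between these theories. Extension of structure group along the inclusion $\bT \hookrightarrow \bG$ yields $\phi: \omega_{*,\bT}(X) \to \omega_{*,\bG}(X)$, $[Y,Q] \mapsto [Y, Q \times^\bT \bG]$; this factors through $W$-coinvariants because Weyl-conjugate $\bT$-reductions of a $\bG$-bundle give isomorphic $\bG$-bundles. In the reverse direction, choose a Borel $\bT \subset \bB \subset \bG$ with unipotent radical $\bU$; given $[f: Y \to X, P] \in \omega_{*,\bG}(X)$, form the smooth proper flag bundle $p: P/\bB \to Y$ and its tautological $\bT$-bundle $P/\bU \to P/\bB$, and set $\tau([Y, P]) = [P/\bB \to X,\, P/\bU] \in \omega_{*,\bT}(X)$ (unipotency of $\bU$ lets one pass freely between $\bB$- and $\bT$-bundles in cobordism). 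Since $(P/\bU) \times^\bT \bG \cong p^*P$, the composite $\phi\tau$ sends $[Y, P]$ to $[P/\bB \to Y \to X,\, p^*P]$, which by the projection formula equals $f_*\bigl([P/\bB \xrightarrow{p} Y] \cdot [Y, P]\bigr)$; rationally the fiber class $p_* 1_{P/\bB} \in \omega_*(Y)_\QQ$ has augmentation $\chi(\bG/\bB) = |W|$, so a filtered Nakayama-type argument (using that $|W|$ is a unit in $\QQ$) shows $\phi_\QQ \tau_\QQ$ is an isomorphism. For the other composite $\tau\phi$ on $\omega_{*,\bT}(X)$, the Bruhat stratification $\bG/\bB = \bigsqcup_{w \in W} \bB w \bB / \bB$ yields, rationally and modulo lower-filtration terms, the Weyl-averaging operator $\sum_{w \in W} w$. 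Together these produce $\omega_{*,\bG}(X)_\QQ \cong \omega_{*,\bT}(X)_\QQ^W$, and likewise for $X = \pt$.

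With this identification in hand, I combine with Theorem~\ref{thm:LP}. For (i), Theorem~\ref{thm:LP}(a) applied to $\bT$ gives $\omega_{*,\bT}(X) \cong \omega_*(X) \otimes_\LL \omega_{*,\bT}$; since $W$ acts trivially on $\omega_*(X)$ and on $\LL$, taking $W$-invariants commutes with the tensor product to yield $\omega_{*,\bT}(X)_\QQ^W \cong \omega_*(X)_\QQ \otimes_{\LL_\QQ} \omega_{*,\bT,\QQ}^W$; applying the identification of the previous paragraph to both sides then shows $(\gamma^X_{\bG})_\QQ$ is an isomorphism. For (ii), Theorem~\ref{thm:LP}(b) for $\bT$ gives $\omega_{*,\bT} \cong \MUBT$, and the classical rational identification $(\MUBT)_\QQ^W \cong (\MUBG)_\QQ$ (valid for any reductive $\bG$ because the fibration $B\bT \to B\bG$ has fiber of nonzero Euler characteristic $|W|$) yields the chain $\omega_{*,\bG,\QQ} \cong \omega_{*,\bT,\QQ}^W \cong (\MUBT)_\QQ^W \cong (\MUBG)_\QQ$, which is exactly the statement that $\vartheta_\QQ$ is an isomorphism.

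The main obstacle will be rigorously establishing the compositional identities for $\phi\tau$ and $\tau\phi$ within \cite{LP}'s geometric double-point framework. These are classical Becker-Gottlieb-style facts in ordinary cohomology or $\MU_*$, but they require independent verification in the cobordism-of-bundles theory; the required lemmas should parallel \cite{LP}'s arguments for $\GL_r$ (Theorem~\ref{thm:LP}(c)) but with the Bruhat stratification of $\bG/\bB$ playing the role of the symmetric-group orbit structure. In contrast to the $\GL_r$ case, the Weyl action on a torus of a general reductive $\bG$ is not merely a permutation, which is why an integral statement fails and only a rational (or after-inverting-the-torsion-index) version is accessible.
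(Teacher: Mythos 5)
The overall architecture you have in mind — reduce to the torus case from Theorem~\ref{thm:LP}, then identify $\omega_{*,\bG}$ rationally with the $W$-(co)invariants of $\omega_{*,\bT}$ via an induction map and a splitting through the flag bundle — is exactly the paper's strategy (see Theorem~\ref{thm:coinv}, Proposition~\ref{prop:surj}, Lemma~\ref{lem:bTbB}, Lemma~\ref{lem:bt}).  The final diagram chases deducing Theorem~\ref{thm:main}(i),(ii) from that identification are also the paper's.  But there are two concrete gaps in the middle where the paper does something genuinely different.

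First, your $\tau$ is the flag-bundle pushforward $[Y,\P]\mapsto[\P/\bB\to X,\P/\bN]$ with no further modification.  This map \emph{raises} degree by $d=\dim\bG/\bB$ (as in operation~(\ref{it:op-GB}) of Section~\ref{sec:operations}: $(\bG/\bB)_-:\omega_{*,\bG}\to\omega_{*+d,\bB}$), so $\phi\tau$ is a map $\omega_{*,\bG}(X)\to\omega_{*+d,\bG}(X)$ and the statement ``$\phi_\QQ\tau_\QQ$ is an isomorphism'' cannot hold as written, and ``$\equiv|W|\cdot\id\pmod\mm$'' is ill-typed.  Relatedly, $p_*[\P/\bB]\in\omega_{*+d}(Y)$ lies in $\mm\,\omega_*(Y)$ when $d>0$ (its image in $\sq\omega_*(Y)\cong\CH_*(Y)$ vanishes), so the ``augmentation equals $\chi(\bG/\bB)=|W|$'' step as stated is not correct and gives no foothold for a Nakayama argument.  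The fix, and what the paper actually does (Lemma~\ref{lem:retract}), is to cap with a degree-$d$ characteristic class $A$ on the flag bundle before pushing forward.  The existence of such an $A$ with $p_*(c_\bT(A)\cap[\bG/\bB])=\tau_\bG[\pt]$ in $\CH_0$ comes from Grothendieck's theorem on the torsion index (Proposition~\ref{prop:BGsect}); topologically $A$ is essentially a top Chern class of a representation, the Euler class of the vertical tangent bundle giving the special value $|W|$.  This is also precisely the extra ingredient that makes the Becker--Gottlieb transfer degree-preserving, so the analogy you are reaching for is right but the transfer is $p_*(e(T_v)\cap-)$, not $p_*$.

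Second, your injectivity argument rests on the claim that $\tau\phi\equiv\sum_{w\in W}w$ modulo lower filtration, which you flag as the ``main obstacle.''  This Bruhat-cell computation is exactly what made the $\GL_r$ case in \cite{LP} delicate, and it is the step the paper deliberately \emph{avoids}.  Instead the paper proves injectivity of $(\ind_\bT^\bG)_\QQ$ on $W$-coinvariants purely algebraically: given $x$ with $\ind_\bT^\bG(x)=0$, apply the pairings of Section~\ref{sec:operations} with Weyl-averaged characters, using the compatibility $\ip{\sum_w w\cdot A}{x}_\bT=\ip{\sum_w w\cdot A}{\ind_\bT^\bG x}_\bG$ (from $c_\bG|_{\bT\text{-reduction}}=c_\bT$) and Lemma~\ref{lem:bt}(iv) to conclude $\sum_w wx=0$, hence $x=0$ in $(\omega_{*,\bT})_W\otimes\QQ$.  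Establishing the characteristic operations $c_\bG$ rationally (Lemma~\ref{lem:cBG-Q}, Corollary~\ref{cor:opsBG}) is much less work than the Bruhat computation, and it is also what lets the paper upgrade the rational result to a $\ZZ[\tfrac1{\tau_\bG}]$-statement.  If you want to push your route through, you will need to both insert the Chern-class capping into $\tau$ and actually carry out the $\tau\phi$ computation; or else substitute the paper's pairing argument for the injectivity half.
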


The proof in \cite{LP} proceeds by first proving getting control over the case for tori and then deducing the case of $\mathbf{GL_r}$ from this by studying the torus of diagonal matrices together its conjugation action by permutation matrices. Our proof will proceed along similar lines, by establishing the following analog:%of a result of Edidin \& Graham comparing $A^*(B\bG)_{\QQ}$ with $(A^*(B\bT)_{\QQ})^W$:
\begin{theorem}\label{thm:coinv} Suppose $\bG$ is a reductive group with maximal torus $\bT$ and Weyl group $W$.  Let $X \in \Sch_k$.  Then, the natural map
  \[ \ind_\bT^\bG: \omegaBT(X)/W \longrightarrow \omegaBG(X) \] 
  \[ [\pi: Y \to X, \P] \longmapsto [\pi: Y \to X, (\P \times \bG)/\bT)] \] induces an isomorphism $(\ind_\bT^\bG)_{\QQ}$ of $\LL_\QQ$-modules upon tensoring $- \otimes_\ZZ \QQ$. 
\end{theorem}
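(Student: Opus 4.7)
The plan is to adapt the strategy of \cite{LP} for Theorem~\ref{thm:LP}(c) (the case $(\bG,\bT) = (\mathbf{GL_r}, \GG_m^r)$, $W = S_r$) to an arbitrary reductive $\bG$. Well-definedness of $\ind_\bT^\bG$ on coinvariants is immediate geometrically: for $w \in W = N_\bG(\bT)/\bT$ and any $\bT$-bundle $\P$, the twisted bundle $w \cdot \P$ determines a canonically isomorphic associated $\bG$-bundle, since any two reductions of a single $\bG$-bundle to a maximal torus differ by an element of $W$.

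For rational surjectivity of $\ind_\bT^\bG$, given a generator $[\pi: Y \to X, \mathcal{Q}] \in \omega_{*,\bG}(X)$, I would form the complete flag bundle $p: Z := \mathcal{Q}/\bT \to Y$. On $Z$ the pullback $p^*\mathcal{Q}$ carries a tautological reduction $\widetilde\P$ to $\bT$, so
\[
\ind_\bT^\bG\bigl[Z \to X, \widetilde\P\bigr] = \bigl[Z \to X, p^*\mathcal{Q}\bigr].
\]
Since $\mathcal{Q}/\bT \to \mathcal{Q}/\bB$ is an affine bundle (a cobordism equivalence by homotopy invariance), and $\mathcal{Q}/\bB$ is built from $Y$ by an iterated $\PP^1$-bundle tower via the Bott--Samelson resolution, iterated application of the $\PP^1$-bundle case of the double point relation yields an identity
\[
\bigl[Z \to X, p^*\mathcal{Q}\bigr] = |W| \cdot \bigl[Y \to X, \mathcal{Q}\bigr] + (\text{classes in the image of } \ind_\bT^\bG),
\]
with $|W| = \chi(\bG/\bT)$ arising as the number of Schubert cells in the tower. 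Dividing by $|W|$, which is permissible rationally, exhibits $[Y,\mathcal{Q}]$ in the image.

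For rational injectivity, I would construct a splitting $\sigma_\QQ$ of $\ind_\bT^\bG$ built from the same flag-bundle geometry, averaged over $W$. The Schubert stratification of $\bG/\bT$ into $|W|$ cells permuted by $W$ (made geometric via Bott--Samelson resolutions) furnishes $|W|$ ``cell contributions'' to a fiber-integration-type operation along $p$; averaging these and dividing by $|W|$ provides the rational section.

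The main obstacle I anticipate is making the fiber integration rigorous inside the double point cobordism framework, where $\omega_{*,\bG}(-)$ is defined via explicit generators and relations rather than as a representable theory. Concretely, one needs $\sigma_\QQ \circ \ind_\bT^\bG = \id$ on $(\omega_{*,\bT}(X)/W)_\QQ$, and this reduces via the Bott--Samelson double point computation to the combinatorial identity $\chi(\bG/\bT) = |W|$ together with the $W$-equivariance of the Schubert decomposition; extracting and organizing these cancellations for a general reductive $\bG$, where the Weyl action can include sign changes (as the introduction highlights for $\mathbf{SL_n}$), is the most delicate part of the argument.
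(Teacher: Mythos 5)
Your outline diverges from the paper's proof and, more importantly, has two concrete gaps in the surjectivity argument and leaves the injectivity argument as an unrealized sketch.

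First, the claim that $\mathcal{Q}/\bB$ (equivalently $\bG/\bB$) is ``built from $Y$ by an iterated $\PP^1$-bundle tower via the Bott--Samelson resolution'' is false for essentially every $\bG$ of rank $\geq 2$. The Bott--Samelson variety attached to a reduced word for $w_0$ is indeed an iterated $\PP^1$-bundle, but it is only \emph{birational} to $\bG/\bB$, not isomorphic; already $\SL_3/\bB$ is a $\PP^1$-bundle over $\PP^2$, and $\PP^2$ is not a $\PP^1$-bundle over a point. Relatedly, the formula $[Z \to X, p^*\mathcal{Q}] = |W|\cdot[Y \to X, \mathcal{Q}] + \cdots$ cannot hold as stated: the two sides lie in different graded pieces of $\omega_{*,\bG}(X)$, since $\dim Z = \dim Y + \dim \bG/\bB$. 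To produce a class in the correct degree that pushes forward to (a multiple of) $[Y]$ you need to cap with a top Chern class along the fibers. The paper does exactly this, but the multiple it obtains is the torsion index $\tau_\bG$ (via Grothendieck's multisection result, Prop.~\ref{prop:BGsect}, together with resolution of singularities and the fact that $\omega_*/\mm \isom \CH_*$), not the Euler characteristic $|W|$. Rationally your Euler-class heuristic could be repaired — $c_d(T_{\bG/\bB})$ is a product of Chern classes of root line bundles and integrates to $|W|$ — but that is a different, and weaker, route than the torsion-index argument, which is what makes the better-than-rational results possible.

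Second, you dispatch the step from $\bB$-bundles to $\bT$-bundles (your ``affine bundle, a cobordism equivalence by homotopy invariance'') too quickly. Double point cobordism $\omega_{*,\bH}(-)$ carries no a priori $\AA^1$-homotopy invariance: it is defined by generators and relations, not as a representable theory, and the fact that $\bB/\bT$ is an affine space does not by itself produce an isomorphism $\omega_{*,\bT}(X) \isom \omega_{*,\bB}(X)$. This is the content of the paper's Lemma~\ref{lem:bTbB}, which is proved by a specific geometric degeneration: conjugating $\bB$ by a one-parameter subgroup $\lambda(t)$ for a strictly dominant coweight to build an $\AA^1$-family of group endomorphisms interpolating between $\id_\bB$ and the retraction $\bB \to \bT$, and feeding this into an explicit double point cobordism. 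The paper flags this lemma as its ``main new input.'' Your proposal implicitly needs it but does not supply it.

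Finally, the injectivity argument you propose (a $W$-averaged fiber-integration splitting) is not how the paper proceeds, and in the form you sketch it would still face the problem of making fiber integration well-defined on $\omega_{*,\bG}$ — a point you acknowledge but do not resolve. The paper instead uses characteristic operations: for $x$ with $\ind_\bT^\bG(x) = 0$, one computes $\ip{A}{\sum_{w} w\cdot x}_\bT = \ip{\sum_w w\cdot A}{x}_\bT = \ip{\sum_w w\cdot A}{\ind_\bT^\bG(x)}_\bG = 0$ for all $A \in \LL_\QQ\ps{\oh\bT}$, and then invokes Lemma~\ref{lem:bt}(iv) to conclude $\sum_w w\cdot x = 0$, hence $x = 0$ in coinvariants rationally. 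This is considerably cleaner and sidesteps the fiber-integration issue entirely. I'd encourage you to study the paper's two-pronged structure: the torsion-index/resolution argument for surjectivity, and the characteristic-class pairing argument for injectivity, both resting on Lemma~\ref{lem:bTbB} and the operations framework of Section~\ref{sec:operations}.
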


\subsection{Better-than-rational results} 
Each compact Lie group (or reductive algebraic group) has certain \emph{torsion primes}. In the compact Lie case, these are the prime numbers $p$ such that $H^*(BG, \ZZ)$ contains non-trivial $p$-torsion.  Upon inverting the \emph{torsion index} $\tau_{\bG}$, a number divisible by precisely these primes, we obtain that $H^*(BG, \ZZ\otg)$ is a free $\ZZ$-module and with a bit more work that $\MU_*(BG) \otimes_\ZZ \ZZ\otg$ is a free $\LL\otg$-module.  The following Theorems tell us that the structure of $\omega_{*,\bG}(X)$ is easy to describe \emph{away from these interesting primes}.

\begin{theorem}\label{thm:better-than-Q} Suppose $\bG$ is a reductive algebraic group with torsion index $\tau_{\bG}$ (see $\S$\ref{ss:torsion} for details).   Let $\omega^{Zar}_{*,\bG}(X)$ denote the double-point cobordism theory of \emph{Zariski-locally trivial} principal $\bG$-bundles.  Then,
  \begin{enumerate}
    \item The natural map $\omega^{Zar}_{*,\bG}(X) \to \omega_{*,\bG}(X)$ induces an isomorphism of $\LL\otg$-modules after inverting $\tau_{\bG}$.
    \item The map $\gamma^X_{\bG}: \omega_*(X) \otimes_{\LL} \omega_{*,\bG} \to \omega_{*,\bG}(X)$ of Theorem~\ref{thm:main} induces an isomorphism $\gamma^X_{\bG}\otg$ of $\LL\otg$-modules.
    \item $\omegaBG\otg$ is a projective $\LL\otg$-module, and its dual is $\Omega^*(B\bG)\otg \isom \Omega^*(B\bT)\otg^W = \LL\otg\ps{\oh{\bT}}^W \isom \LL\otg\ps{\lambda_1,\ldots,\lambda_r}^W$ ($\isom \MU^*(B\bT(\CC), \ZZ\otg)^W$ if we're over $\CC$) where $r$ is the rank of $\bG$ (see $\S$\ref{ss:char-notation} for details on the notation).
  \end{enumerate}
\end{theorem}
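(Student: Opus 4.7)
The plan mirrors the strategy for the rational Theorem~\ref{thm:main}: first establish the torus-to-$\bG$ reduction integrally (after inverting $\tau_\bG$), then bootstrap to the three stated properties. The key ingredient is an integral version of Theorem~\ref{thm:coinv}: the induction map
\[ (\ind_\bT^\bG)\otg \colon \omega_{*,\bT}(X)\otg / W \longrightarrow \omega_{*,\bG}(X)\otg \]
is an isomorphism.

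I would prove this by constructing a geometric transfer. For a generator $[\pi \colon Y \to X, \P]$ of $\omega_{*,\bG}(X)$, the bundle $\P/\bT \to Y$ is a Zariski-locally trivial $\bG/\bT$-fibration over which $\P \to \P/\bT$ is tautologically a principal $\bT$-bundle; this produces a class $\bar{t}([Y,\P]) \in \omega_{*,\bT}(X)/W$, well-defined up to the $W$-ambiguity in the choice of Borel. The crux is the pair of composition identities
\[ \bar{t} \circ \ind_\bT^\bG = \tau_\bG \cdot \id, \qquad \ind_\bT^\bG \circ \bar{t} = \tau_\bG \cdot \id, \]
which express the defining property of the torsion index (see $\S$\ref{ss:torsion}) in terms of pushforward along $\bG/\bT$-bundles and the associated Euler class. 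After inverting $\tau_\bG$ both compositions become invertible, and so does $(\ind_\bT^\bG)\otg$.

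Given this reduction, the three parts follow by combining with Theorem~\ref{thm:LP} applied to the torus $\bT = \GG_m^r$. For (ii), the natural commutative square whose top row is $\gamma^X_\bT\otg / W \colon (\omega_*(X) \otimes_\LL \omega_{*,\bT})\otg / W \to \omega_{*,\bT}(X)\otg / W$ and whose bottom row is $\gamma^X_\bG\otg$, with vertical sides induced by $\ind_\bT^\bG$, exhibits $\gamma^X_\bG\otg$ as the $W$-coinvariants of $\gamma^X_\bT\otg$; the top is an isomorphism by Theorem~\ref{thm:LP}(i) applied to the torus (noting $W$ acts trivially on the $\omega_*(X)$ factor), and the verticals are isomorphisms by the first step. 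For (i), $\ind_\bT^\bG$ visibly lands in $\omega^{\Zar}_{*,\bG}(X)$ since induced bundles from Zariski-locally trivial $\bT$-bundles remain Zariski-locally trivial, so surjectivity of $(\ind_\bT^\bG)\otg$ gives surjectivity of $\omega^{\Zar}_{*,\bG}\otg \to \omega_{*,\bG}\otg$; injectivity follows by lifting double-point cobordism witnesses through $\bar{t}$ to the Zariski-locally trivial setting on the $\bT$-side. For (iii), Theorem~\ref{thm:LP}(ii) identifies $\omega_{*,\bT}\otg$ with $\MUBT\otg$, a free $\LL\otg$-module whose dual is $\Omega^*(B\bT)\otg = \LL\otg\ps{\lambda_1, \ldots, \lambda_r}$; the defining property of $\tau_\bG$ makes $\Omega^*(B\bG)\otg \to \Omega^*(B\bT)\otg^W$ an isomorphism onto a projective $\LL\otg$-direct summand, and dually $\omega_{*,\bG}\otg \isom \omega_{*,\bT}\otg / W$ is projective over $\LL\otg$ with the stated dual.

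The main obstacle is the transfer construction: while the analog in $\MU$-cohomology is a classical pushforward along the $\bG/\bT$-fibration, realizing $\bar{t}$ as a well-defined operation on double-point cobordism cycles --- and verifying the key $\tau_\bG$-identities at the cycle level --- requires careful analysis of $\bG/\bT$-fibrations and the associated Euler class computation. Once that technical core is in place, the rest is a straightforward propagation through the inclusion $\bT \subset \bG$.
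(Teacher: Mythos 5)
Your central intermediate claim --- that $(\ind_\bT^\bG)\otg\colon\omega_{*,\bT}(X)\otg/W\to\omega_{*,\bG}(X)\otg$ is an isomorphism --- is not what the paper proves, and it is likely false in general. The paper takes pains to avoid asserting exactly this: Prop.~\ref{prop:direct-summand}(ii) states $\omega_{*,\bG}\otg\isom(\omega_{*,\bT}\otg/W)/\text{torsion}$, and the introduction explicitly flags that the better-than-rational proofs involve ``algebraic trickery (Prop.~\ref{prop:retract}) to make up for the fact that $\omega_{*,\bT}/W$ need not be free.'' The map $\ind_\bT^\bG\otg$ is \emph{surjective} (Prop.~\ref{prop:surj}), but its kernel is torsion and need not vanish after inverting $\tau_\bG$. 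Your two proposed composition identities would force $\ind_\bT^\bG\otg$ to be an isomorphism, so at least one must fail; the problematic one is $\bar t\circ\ind_\bT^\bG=\tau_\bG\cdot\id$ on $\omega_{*,\bT}(X)/W$. This also undercuts the intended proof of projectivity in (iii): a $W$-coinvariant quotient of a free module is not automatically projective, so even if the map were an isomorphism, projectivity would not follow from the identification with $\omega_{*,\bT}\otg/W$.

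There are also two technical gaps in the construction of $\bar t$. First, $\bG/\bT$ is (quasi-)affine, so $\P/\bT\to X$ is not a projective morphism and does not define a double-point cobordism cycle; one must instead push forward along the projective flag bundle $\P/\bB\to Y$ and then use Lemma~\ref{lem:bTbB} to pass from $\bB$-bundles to $\bT$-bundles. Second, the pushforward-times-Euler-class computation only gives $\tau_\bG\cdot\id$ modulo the augmentation ideal $\mm=\LL_{\geq 1}$, not on the nose (the formal group law corrections survive); this is adequate because one then invokes graded Nakayama (Lemma~\ref{lem:grNAK}), but the identity should not be stated as exact.

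The paper's actual route sidesteps the coinvariants entirely. Lemma~\ref{lem:retract} composes $(\bG/\bB)_{-}$, a characteristic operation $\c_\bT(A)\cap-$ for an $A$ witnessing the torsion index, and $\ind_\bB^\bG$, to get an endomorphism $\phi$ of $\omega_{*,\bG}(X)$ that is $\tau_\bG\cdot\id\pmod\mm$; graded Nakayama makes $\phi\otg$ invertible, exhibiting $\omega_{*,\bG}(X)\otg$ as a \emph{direct summand} of $\omega_{*,\bB}(X)\otg\isom\omega_{*,\bT}(X)\otg$ without passing through $W$-coinvariants. This at once gives (i) (by comparing the corresponding retraction diagram for $\omega^{Zar}$), (ii) (by naturality, retracting $\gamma_\bG^X$ off $\gamma_\bB^X$), and the projectivity half of (iii). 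Identifying the dual then uses the pairings $\ip{-}{-}_\bT$, $\ip{-}{-}_\bG$ and the perfectness from Lemma~\ref{lem:bt}, not a direct appeal to Theorem~\ref{thm:LP}. You should restructure the argument around this retraction idea rather than an isomorphism onto coinvariants.
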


As a corollary, we obtain:
\begin{theorem}\label{thm:special} Suppose $\bG$ is a \emph{special} reductive algebraic group, in the sense that every principal $\bG$-bundle is Zariski-locally trivial (e.g., iterated extensions of $\GG_m$, $\mathbf{SL_n}$, and $\mathbf{Sp_n}$).  Then,
  \begin{enumerate}
    \item The map $\gamma^X_{\bG}: \omega_*(X) \otimes_{\LL} \omega_{*,\bG} \to \omega_{*,\bG}(X)$ of Theorem~\ref{thm:main} is an isomorphism of $\LL$-modules.
    \item $\omegaBG$ is a projective $\LL$-module, and its dual is $\Omega^*(B\bG) \isom \Omega^*(B\bT)^W = \LL\ps{\oh{\bT}}^W \isom \LL\ps{\lambda_1,\ldots,\lambda_r}^W$ ($\isom \MU^*(B\bT(\CC))^W$ if we're over $\CC$) where $r$ is the rank of $\bG$ (see $\S$\ref{ss:char-notation} for details on the notation).
  \end{enumerate}
\end{theorem}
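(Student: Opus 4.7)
My plan is to derive Theorem~\ref{thm:special} as an immediate specialization of Theorem~\ref{thm:better-than-Q}. Only two observations are needed, and given them, parts (ii) and (iii) of Theorem~\ref{thm:better-than-Q} read word-for-word as parts (i) and (ii) of Theorem~\ref{thm:special}, because $\LL\otg = \LL$ once $\tau_{\bG}=1$.

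The first observation is tautological: by the very hypothesis that $\bG$ is special, every principal $\bG$-bundle on every $k$-scheme is already Zariski-locally trivial. Consequently, every bundle appearing in a cycle defining $\omega_{*,\bG}(X)$ is Zariski-locally trivial, and the comparison map $\omega^{Zar}_{*,\bG}(X) \to \omega_{*,\bG}(X)$ of Theorem~\ref{thm:better-than-Q}(i) is literally the identity on cycles (and hence on classes), before any localization.

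The second observation, which carries the real content, is that $\tau_{\bG} = 1$ for every special reductive $\bG$. By the Grothendieck--Serre classification, in characteristic zero the special reductive groups are exactly the iterated extensions of $\GG_m$, $\mathbf{SL_n}$, and $\mathbf{Sp_n}$. For each of these building blocks I would verify $\tau_{\bG}=1$ directly: for $\GG_m$ this is trivial; for $\mathbf{SL_n}$ and $\mathbf{Sp_n}$ one uses that $H^*(B\bG(\CC),\ZZ)$ is the torsion-free polynomial ring on the usual Chern classes $c_2,\ldots,c_n$ (resp.\ symplectic Pontryagin classes $p_1,\ldots,p_n$), from which the surjectivity of the integration pairing on $\bG/\bT$ that defines $\tau_{\bG}$ in $\S$\ref{ss:torsion} is immediate. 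Since the torsion index is multiplicative in short exact sequences of reductive groups, this propagates to every special $\bG$.

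With both observations in hand, one substitutes $\tau_\bG=1$ into Theorem~\ref{thm:better-than-Q}(ii),(iii) to conclude. The only genuine obstacle in this plan is the torsion-index computation for the building blocks; everything else is unpacking definitions and citing the classification.
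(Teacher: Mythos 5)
Your proposal is correct and follows the same route the paper intends: Theorem~\ref{thm:special} is stated as a corollary of Theorem~\ref{thm:better-than-Q}, obtained by substituting $\tau_{\bG}=1$ so that $\LL\otg=\LL$ and the Zariski-local hypothesis is vacuous. The only divergence is peripheral: you establish $\tau_{\bG}=1$ for special $\bG$ by invoking the Grothendieck--Serre classification plus a cohomology computation for $\GG_m$, $\mathbf{SL_n}$, $\mathbf{Sp_n}$ and multiplicativity of the torsion index, whereas the paper's remark in $\S$\ref{ss:torsion} records a direct geometric argument (special $\iff$ every $(\bG/\bB)_{\P}$ has Zariski-local sections $\iff$ $\tau_{\bG}=1$) that avoids the classification entirely; either verification is fine, though if you keep yours it would be cleaner to cite the torsion-freeness of $H^*(BG,\ZZ)$ as one of the TFAE conditions in the paper's Proposition rather than appealing to a ``surjectivity of the integration pairing'' that is only loosely connected to the definition of $\tau_{\bG}$ given there.
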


%\begin{remark} 
%  After reading the statements of the previous theorems, it is natural to ask how this dual compares to $\MU^*(B\bG(\CC))$ (here, $\MU^*$ will implicitly be with $\tau_\bG$ inverted).  By results of Section~\ref{sec:rational} (and their analogs in topology) $\Omega^*(B\bG) \oh{\otimes} \ZZ\otg \isom \left(\Omega^*(B\bT) \oh \otimes \ZZ\otg\right)^W$. (And the same with $\Omega^*$ replaced by $MU^*$).
%\end{remark}

\begin{remark} The identification of the dual module in Theorem~\ref{thm:better-than-Q}(ii) is via ``characteristic operations.'' They are an algebraic model of the following: $MU$ is a ring spectrum, so that $MU_*(B\bG(\CC))$ is a module over the ring $MU^*(B\bG(\CC))$. % While the natural map $MU^*(B\bG(\CC)) \to MU^*(B\bT(\CC))^W$ need not be an isomorphism integrally, it is ``not far'' (in the sense that it is an isomorphism rationally).  That the a priori larger ring intervenes is a reflection of the fact that Zariski-locally trivial $\bG$-bundles have ``extra'' characteristic classes (see Section~\ref{ss:charBG-2}).
\end{remark}

It is also reasonable to ask what we can say about $\omega_{*,\bG}$, rather than merely its dual.  This is somewhat addressed by the following Proposition.
\begin{prop}\label{prop:direct-summand} Suppose $\bG$ and $\tau_{\bG}$ are as in Theorem~\ref{thm:better-than-Q}.  Then, \begin{enumerate} \item $\omega_{*,\bG}\otg$ is a direct summand of the free $\LL\otg$-module $\omega_{*, \bT}\otg$.  Thus, it is the quotient of $\omega_{*, \bT}\otg$ by its complementary direct summand, which is precisely the submodule of $x \in \omega_{*, \bT}\otg$ which pair trivially with all classes in $\LL\otg\ps{\oh{\bT}}^W \subset \LL\otg\ps{\oh{\bT}} \isom \MU^*(B\bT, \ZZ\otg)$ under $\ip{-}{-}_\bT$ (see Section~\ref{sec:operations}).
  \item $\omega_{*,\bG}\otg \isom (\omega_{*,\bT}\otg/W)/\text{torsion}$. \end{enumerate}
\end{prop}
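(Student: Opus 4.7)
The plan is to deduce both parts from the duality of Theorem~\ref{thm:better-than-Q}(iii) together with Theorem~\ref{thm:coinv}. Write $R := \LL\otg\ps{\oh{\bT}}$, so that $\omega_{*,\bT}\otg$ is a free $\LL\otg$-module whose characteristic-operations dual is $R$, and $\omega_{*,\bG}\otg$ has dual $R^W$. A direct compatibility check against the geometric definition of the pairings shows that $\ind_\bT^\bG: \omega_{*,\bT}\otg \to \omega_{*,\bG}\otg$ is adjoint to the inclusion $\iota: R^W \hookrightarrow R$, i.e. $\ip{\ind_\bT^\bG(x)}{f}_\bG = \ip{x}{\iota(f)}_\bT$ for every $x \in \omega_{*,\bT}\otg$ and $f \in R^W$; this expresses the fact that the $\bG$-characteristic classes of $(\P \times \bG)/\bT$ are the $W$-invariant $\bT$-characteristic classes of $\P$.

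For (i), the key ingredient is that after inverting $\tau_\bG$, the inclusion $\iota: R^W \hookrightarrow R$ splits as a map of $\LL\otg$-modules; in fact $R$ is a free $R^W$-module of rank $|W|$. This is the formal-power-series analog of the Borel--Demazure theorem that $H^*(B\bT, \ZZ\otg)$ is free of rank $|W|$ over $H^*(B\bG, \ZZ\otg)$, and it follows by the same inductive argument applied to the formal group law on $\LL\otg$ in place of the additive law (or, when $k \subset \CC$, by direct comparison with $\MU^*$ via Theorem~\ref{thm:better-than-Q}). Fix a splitting $R = R^W \oplus J$ of $\LL\otg$-modules; dualizing produces a decomposition $\omega_{*,\bT}\otg \isom \omega_{*,\bG}\otg \oplus J^\vee$ that exhibits $\omega_{*,\bG}\otg$ as an $\LL\otg$-summand, and by construction $J^\vee$ is precisely the set of $x \in \omega_{*,\bT}\otg$ with $\ip{x}{f}_\bT = 0$ for every $f \in R^W$, as claimed.

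For (ii), the associated-bundle construction $\P \mapsto (\P \times \bG)/\bT$ depends on the $\bT$-reduction only through its $W$-orbit, so $\ind_\bT^\bG$ factors through a surjection $q: \omega_{*,\bT}\otg/W \twoheadrightarrow \omega_{*,\bG}\otg$. By Theorem~\ref{thm:coinv}, the map $q \otimes_\ZZ \QQ$ is an isomorphism, so $\ker q$ is $\ZZ$-torsion. By (i), $\omega_{*,\bG}\otg$ is a direct summand of the free $\LL\otg$-module $\omega_{*,\bT}\otg$ and hence is torsion-free; therefore $q$ descends to the asserted isomorphism $(\omega_{*,\bT}\otg/W)/\text{torsion} \stackrel{\isom}{\longrightarrow} \omega_{*,\bG}\otg$.

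I expect the main obstacle to be making the freeness of $R$ over $R^W$ fully rigorous in the formal-power-series setting after inverting $\tau_\bG$, together with checking that the chosen splitting interacts correctly with the (necessarily continuous) characteristic-operations pairing; the underlying Borel--Demazure input is classical, but the bookkeeping around the $\mm$-adic topology on $R$ and its compatibility with the duality of Theorem~\ref{thm:better-than-Q}(iii) requires care.
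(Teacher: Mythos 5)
Your part~(ii) is correct and is essentially the paper's argument: Prop.~\ref{prop:surj} gives the surjection $\omega_{*,\bT}\otg/W \twoheadrightarrow \omega_{*,\bG}\otg$, Theorem~\ref{thm:coinv} shows the kernel is torsion, and torsion-freeness of $\omega_{*,\bG}\otg$ (from Prop.~\ref{prop:retract}(iv)) forces the kernel to coincide with the torsion submodule.

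Your part~(i) has a genuine gap in the step ``dualizing produces a decomposition $\omega_{*,\bT}\otg \isom \omega_{*,\bG}\otg \oplus J^\vee$.'' The duality here is one-sided: Lemma~\ref{lem:bt}(ii) gives $R = (\omega_{*,\bT}\otg)^\dual$, but $\omega_{*,\bT}\otg$ is a free $\LL\otg$-module of infinite rank and is therefore \emph{not} reflexive --- the natural map $\omega_{*,\bT}\otg \to R^\dual$ is a strict inclusion. So a splitting $R = R^W \oplus J$ dualizes to a decomposition of $R^\dual$, not of $\omega_{*,\bT}\otg$, and there is no a priori reason the induced projections $R^\dual \to (R^W)^\dual$ and $R^\dual \to J^\dual$ should carry $\omega_{*,\bT}\otg$ into itself. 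Concretely: for your proposed section $\sigma = \pi^\vee|_{\omega_{*,\bG}\otg}$ (where $\pi: R \to R^W$ splits $\iota$) to land in $\omega_{*,\bT}\otg \subset R^\dual$ is an extra condition on the choice of $\pi$ that you have not verified. On top of this, the freeness of $R$ over $R^W$ after inverting $\tau_\bG$ --- the ``formal-group-law Borel--Demazure theorem'' --- is a nontrivial input that is nowhere established in the paper (Prop.~\ref{prop:flagbundle} only sketches related material and is explicitly flagged as not needed), so your proof would take on a substantial additional burden even if the dualization step were repaired.

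The paper avoids both difficulties. The direct-summand statement is obtained geometrically (Lemma~\ref{lem:retract}, Prop.~\ref{prop:retract}(iii)): the composite
$\phi = \ind_\bB^\bG \circ (\c_\bT(A)\cap -) \circ (\bG/\bB)_{-}$
is shown to be $\equiv \tau_\bG$ modulo $\mm$, hence invertible after inverting $\tau_\bG$, which exhibits $(\bG/\bB)_{-}: \omega_{*,\bG}\otg \to \omega_{*,\bB}\otg \isom \omega_{*,\bT}\otg$ as a split injection with no appeal to duality at all. The identification of the complementary summand is then done by a different, more careful dual argument: having established $(\omega_{*,\bG}\otg)^\dual \isom R^W$ (Theorem~\ref{thm:better-than-Q}(iii)) and projectivity of $\omega_{*,\bG}\otg$, one embeds $\omega_{*,\bG}\otg$ into its double dual $(R^W)^\dual$ and observes that the surjection $\omega_{*,\bT}\otg \twoheadrightarrow \omega_{*,\bG}\otg$ matches the restricted-pairing map $\omega_{*,\bT}\otg \to (R^W)^\dual$, whose kernel is by definition $\{x : \ip{A}{x}_\bT = 0 \text{ for all } A \in R^W\}$. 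Projectivity then makes this kernel a complementary direct summand. Your adjointness observation ($\ind_\bT^\bG$ is adjoint to $\iota$) is correct and is implicitly the content of the commuting triangle in the paper's proof, so that piece of your reasoning is sound; it is specifically the dualization-of-a-splitting step that does not go through.
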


\subsection{Relation to nearby definitions} At least if $X$ is projective, $\omega_{*,\bG}(X)$ can be thought of as a geometric version of ``bordism of free $\bG$-spaces over $X$,'' which one can regard as (at least the free piece of) a $\bG$-equivariant variant of algebraic bordism.\footnote{It is not hard to make a reasonable analogous definition of $\omega_{*,\bG}(X)$ for $X$ a scheme with $\bG$-action, satisfying some of the usual compatibilities such as $\omega_{*,\bG}((X \times \bG)/\bH) = \omega_{*,\bH}(X)$ for a $\bH$-scheme $X$, $\omega_{*,\bG}(X) = \omega_{*}(X/\bG)$ if the action is free, etc.  However, various other formal properties (e.g., localization, pullbacks) seem to be tricky at best.}  There is a more standard definition of  (the free piece of) ``equivariant algebraic cobordism'' $\Omega^*_\bG(X)$, worked out in \cite{Deshpande}, as (up to details of the limiting argument) $\Omega^*((X \times E\bG)/\bG)$.  These latter objects are sometimes reindexed and relabelled as $\Omega_*^\bG(X)$, but this notation is perhaps misleading since the limiting is still carried out using pullbacks rather than pushforwards.  Concretely, take $X = \pt$: $\Omega_*^\bG(\pt) = \Omega^{-*}(B\bG)$ lives in all degrees ($\LL$ is pushes the degree up, while the images of elements in Chow live in non-positive degree), completely unlike what one might expect of $\MU^G_*(\pt) = \MU_*(BG)$ in topology.  There is however a relationship between $\omega_{*,\bG}(X)$ and $\Omega^*_\bG(X)$: $\omega_{*,\bG}(X)$ is a module for $\Omega^*_\bG(\pt)$, and this is precisely the structure of ``characteristic operations'' that we will use.

\subsection{Plan of the paper} Section~\ref{sec:bg} reviews some background on torsion indices and algebraic cobordism.  
Section~\ref{sec:charBG} develops what we need of characteristic classes in algebraic cobordism.
Section~\ref{sec:operations} develops some operations on double-point cobordism ``with extra structure.''
The remaining three sections are devoted to proving the main results.  Section~\ref{sec:bTbB} provides the main new input (Lemma~\ref{lem:bTbB}) and recalls a slight reformulation of results of \cite{LP}.
Section~\ref{sec:rational} gives a proof of the rational statements that is just an elaboration of the proof given in \cite{LP} for $\mathbf{GL_r}$, with Lemma~\ref{lem:bTbB} and rational characteristic classes for $\bG$-bundles providing ``drop-in'' replacements for parts of the arguments there.

Section~\ref{sec:better-than-Q} contains the proofs of the ``better-than-rationally' results, which involve some algebraic trickery (Prop.~\ref{prop:retract}) to make up for the fact that $\omega_{*,\bT}/W$ need not be free.  We emphasize that the proofs are relatively formal from Lemma~\ref{lem:bTbB}, the properties of torsion indices, and the existence of enough characteristic classes.  It should be noted that this section is logically independent of the previous one.\footnote{Making this literally true requires minor rewording. Remark~\ref{rmk:depends} indicates an alternate path to the better-than-rationally results which depends on Section~\ref{sec:rational} but does not require the construction of better-than-rational characteristic classes.  However, we thought it worthwhile to include Section~\ref{sec:rational} for the slightly more hands-on feel of its proofs and to include better-than-rational characteristic classes for their own sake.}

\subsection{Acknowledgements} The author thanks Davesh Maulik for bringing this problem to his attention, and for encouragement while this work was being carried out and written.  The author thanks Rahul Pandharipande for helpful conversation, and for his interest in this work.  This work was carried out while the author was supported by an NSF Graduate Fellowship.

%\begin{enumerate}
%  \item The degeneration argument (Props.~11, 14 of \cite{LP}) that gives us generators in the case of tori.  We need this only as a black box, but it is key for limiting the necessary generators.
%  \item Grothendieck's results on torsion indices, telling us that Chern classes of (associated bundles of) $\bG$-representations generate the Chow ring of the generalized flag variety $\bG/\bB$ up to inverting the torsion index $\tau_\bG$.  This is the ingredient that allows us to reduce from $\bG$-bundles to $\bB$-bundles (and then by a standard trick to $\bT$-bundles).
%  \item Characteristic classes for $\bG$-bundles, which are used to get a handle on the kernel of $\omega_{*, \bT} \to \omega_{*,\bG}$.  More precisely, we use work of Calmes, Petrov, \& Zainoulline on algebraic cobordism of generalized flag varieties to develop suitable characteristic classes for $\bG$-bundles.  (<<TaODO>> Actually, such precise understanding of Schubert calculus for algebraic cobordism as CPZ is not necessary.  It looks like this dependence is a figment of the current writeup.)
%\end{enumerate}

\subsection{Notation} 
We work throughout over a fixed \emph{algebraically closed field $k$ of characteristic zero}.  (Algebraically closed can be eliminated by e.g., requiring tori to be split, or working only rationally.  The characteristic zero assumption seems deeply ingrained for the time being.)  It will sometimes be convenient (for comparing to topology) to assume that our field embeds in (or just is) $\CC$, and we will do so without further notice.

Unless otherwise stated, we adopt the following notation
\begin{itemize}
  \item $\Sch_k$, $\Sm_k$: Category of separated schemes of finite type over $k$, the full subcategory of smooth quasi-projective $k$-schemes.
  \item $\bH$; $\bG$, $\bG_0$, $\bB$, $\bT$, $\bN$, and $W$: A linear algebraic group; a reductive algebraic group, its connected component of identity, a Borel subgroup, a maximal torus in $\bB$, the unipotent radical of $\bB$, and the Weyl group $W = N_{\bG}(\bT)/\bT$.
  \item $G$, $T$: Compact real forms of $\bG(\CC)$, $\bT(\CC)$.
  \item $\P$, $X_{\P}$, $p: (\bG/\bB)_{\P} \to Y$, $\P^{\bB} \to (\bG/\bB)_{\P}$, $\P^{\bT}$: $\P$ is a principal $\bG$-bundle on $Y$. If $X$ is a scheme acted on by $\bG$ then $X_{\P} \eqdef (\P \times X)/G$ is the \emph{associated space}.  (If the action of $\bG$ on $X$ is polarized by some ample line bundle on $X$, then $X_{\P}$ will be a $Y$-scheme.  This will be the case in all cases of interest to us.)  The generalized flag bundle $p:\P/\bB = (\bG/\bB)_{\P} \to Y$ of $\P$ carries a natural $\bB$-bundle denoted $\P^{\bB}$: this is just $\P \to \P/\bB$ regarded as a $\bB$-bundle on the flag variety, and it is a $\bB$-reduction of $p^* \P$.  If $\P$ is a principal $\bB$-bundle (in most cases, $\P^{\bB}$), then $\P^{\bT} = (\bB/\bN)_{\P} = \P/\bN$ is the associated principal $\bT$-bundle.
  \item $\LL$, $\mm = \LL_{\geq 1}$: The Lazard ring (the ring over which the universal formal group law is defined, and $\omega_*(\Spec k)$), and the ideal of positively graded elements in $\LL$ (=the augmentation ideal of $\LL \to \ZZ = \LL/\mm$).
  \item $\omega_{*}(X)$, $\sq{\omega_{*}}(X)$, $\omega_{*,\bG}(X)$, $\sq{\omega_{*,\bG}}(X)$: Double point cobordism, double point cobordism $- \otimes_{\LL} \ZZ$, double point cobordism with principal $\bG$-bundles, same $-\otimes_{\LL} \ZZ$.
  \item If $M$ is an abelian group, $M_{\QQ} = M \otimes_{\ZZ} \QQ$ and $M\otg = M \otimes_\ZZ \ZZ\otg$.  If $\phi: M \to M'$ is a homomorphism of abelian groups, $\phi_{\QQ} = \phi \otimes_{\ZZ} \QQ$ and $\phi\otg = \phi \otimes_\ZZ \ZZ\otg$.  There is an \emph{exception} to this notation: If $M$ is complete with respect to a filtration (e.g., $\Omega^*(B\bG)$ with respect to the coniveau filtration, $MU^*(B\bG(\CC))$ with respect to the skeletal filtration, etc.) then $M_\QQ$ and $M\otg$ are to be interpreted in terms of completed tensor product.
\end{itemize}

%%%%%%%%%%%%%%%%%%%%%%%%%%%%%%%%%%%%%%%%%%%%%%%%%

\section{Background}\label{sec:bg}

\subsection{Torsion primes}\label{ss:torsion} We recall here some classical facts about torsion in cohomology of Lie groups.
\begin{prop} Suppose $G$ is a compact connected Lie group and $p$ is a prime.  Then, TFAE
  \begin{enumerate} \item $H^*(G, \ZZ)$ has non-trivial $p$-torsion.
    \item $H^*(BG, \ZZ)$ has non-trivial $p$-torsion.
    \item The cokernel of the restriction map $H^*(BT, \ZZ) \to H^*(G/T, \ZZ)$, associated to the fibration $G/T \to BT \to BG$, has non-trivial $p$-torsion.
    \item There is an elementary abelian $p$-subgroup of $G$ that is not contained in a maximal torus.
  \end{enumerate} 
\end{prop}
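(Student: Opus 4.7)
The plan is to establish the cycle (i) $\Leftrightarrow$ (ii) $\Leftrightarrow$ (iii) by standard spectral-sequence arguments, and then invoke Borel's classical theorem for the equivalence with (iv), which is the substantive content.

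For (i) $\Leftrightarrow$ (ii), I would apply the Serre spectral sequence of the universal bundle $G \to EG \to BG$ with $\FF_p$-coefficients.  Since $G$ is a connected $H$-space and $BG$ is simply connected, Borel's transgression theorem identifies $H^*(BG, \FF_p)$ as a polynomial algebra on even-degree generators precisely when $H^*(G, \FF_p)$ is an exterior algebra on odd-degree generators, and either of these conditions is equivalent (via universal coefficients and finite generation) to the absence of $p$-torsion in the corresponding integral cohomology.

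For (ii) $\Leftrightarrow$ (iii), I would use the fibration $G/T \to BT \to BG$.  The Bruhat decomposition gives $G/T$ a cell structure with only even-dimensional cells, so $H^*(G/T, \ZZ)$ is torsion-free and concentrated in even degrees; and $H^*(BT, \ZZ) \isom \Sym \oh{T}$ is a polynomial ring.  In the integral Serre spectral sequence, the only potential obstruction to collapse (and hence to $H^*(BG, \ZZ)$ being torsion-free) sits in the cokernel of the edge map $H^*(BT, \ZZ) \to H^*(G/T, \ZZ)$: a small diagram chase (comparing with the rational spectral sequence, which collapses by Leray--Hirsch since $H^*(BG, \QQ)$ acts freely on $H^*(BT, \QQ)$) shows $H^*(BG, \ZZ)$ has no $p$-torsion iff this cokernel has no $p$-torsion.

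For the equivalence with (iv) I would invoke the classical theorem of Borel (\emph{Sous-groupes commutatifs et torsion des groupes de Lie compacts connexes}, 1961).  The direction (iv) $\Rightarrow$ (i) goes by exhibiting classes in $H^*(BA, \FF_p)$ for an elementary abelian $A \not\subset T$ that are incompatible with any factorization through $H^*(BT, \FF_p)^W$, producing $p$-torsion in $H^*(BG, \ZZ)$.  The converse is deeper and uses (a forerunner of) Quillen's stratification together with a case-by-case analysis for the simple factors.  The main obstacle is this last equivalence: there is no short spectral-sequence derivation, and we essentially take Borel's theorem as a black box.  Since the paper is recalling these as classical facts, we would simply cite Borel's paper rather than reproving it.
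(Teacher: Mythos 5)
The paper does not actually prove this Proposition: it is stated in $\S$2.1 as a recall of ``classical facts about torsion in cohomology of Lie groups,'' with no argument given and no citation attached beyond the surrounding discussion of the torsion index (which points to Grothendieck for Prop.~\ref{prop:BGsect} but not for this Proposition). So there is no ``paper's own proof'' against which to compare; the relevant question is only whether your sketch is a defensible account of the standard sources, and on that score it is basically sound.

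Your organization of the equivalences is the conventional one: (i)$\Leftrightarrow$(ii) via the Serre spectral sequence of $G \to EG \to BG$ and Borel's transgression theorem, (ii)$\Leftrightarrow$(iii) via the flag-bundle fibration $G/T \to BT \to BG$, and (iv) as Borel's 1961 theorem taken as a black box. Two places could use more care if this were to be expanded into an actual proof. First, for (i)$\Leftrightarrow$(ii) you slide between ``$H^*(G,\ZZ)$ has no $p$-torsion,'' ``$H^*(G,\FF_p)$ is exterior on odd generators,'' and ``$H^*(BG,\FF_p)$ is polynomial on even generators''; each of these implications is a theorem of Borel and the last one needs the Hopf--Borel structure theorem plus the fact that $G$ is finite-dimensional (so no polynomial factors can appear in $H^*(G,\FF_p)$). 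Second, the step in (ii)$\Leftrightarrow$(iii) that you call a ``small diagram chase'' is the least transparent: the precise claim that the $p$-torsion of $\coker\bigl(H^*(BT,\ZZ) \to H^*(G/T,\ZZ)\bigr)$ detects exactly the $p$-torsion in $H^*(BG,\ZZ)$ is itself one of the main results of Borel's paper, not a consequence of the degenerate rational spectral sequence alone; in particular, torsion in $H^*(BG,\ZZ)$ does not simply ``sit in the cokernel of the edge map'' without further argument about how the integral filtration interacts with the torsion-free ring $H^*(G/T,\ZZ)$. You are right, though, that (iv) is the genuinely deep equivalence and has no short derivation, and that citing Borel (\emph{Sous-groupes commutatifs et torsion des groupes de Lie compacts connexes}, T\^ohoku Math.\ J.\ 1961) is the appropriate resolution --- indeed that is precisely what one would expect the paper to have done had it supplied a reference.
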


\begin{na}\label{na:char-hom} Suppose $\lambda \in \oh{\bT}$ is a character of $\bT$.  It determines a character of $\bB$ by extending trivially along the unipotent radical $\bN \subset \bB$, and an associated line bundle $\L(\lambda) = (\AA^1_\lambda \times \bG)/\bB$ on $\bG/\bB$.  Define the \emph{characteristic homomorphism} \[ \ch: \Sym \oh{\bT} \to \CH^*(\bG/\bB) \qquad \text{by} \qquad \ch(\lambda) = c_1(\L(\lambda)). \]  This is an algebro-geometric analog of the restriction map in (iii) above.  It is rationally surjective.
\end{na}

\begin{defn} The \emph{torsion index} of a connected reductive group $\bG$, denoted $\tau_{\bG}$, is the least positive integer that kills the cokernel of the characteristic homomorphism $\ch$.   If $\bG$ is not necessarily connected, set $\tau_\bG = \tau_{\bG_0} \times \Card{\bG/\bG_0}$.

  We say that a prime $p$ is a \emph{torsion prime} for a reductive group $\bG$ if $p$ divides $\tau_{\bG}$.  This is equivalent to $p$ either dividing $\Card{\bG/\bG_0}$ or satisfying one of the equivalent conditions of the Proposition (for a compact real form of $\bG$).
\end{defn}

\begin{remark} In effect, the torsion primes measure the degree to which we cannot mimic the usual Leray-Hirsch argument which shows that $H^*(BU(n),\ZZ) \hookrightarrow H^*(BU(1)^n, \ZZ)^{S_n}$ for the analogous map $H^*(B\bG) \to H^*(B\bT)^W$: It is precisely the failure of surjectivity in (iii) that prevents Chern classes from providing us with cohomology classes on the total space satisfying the hypotheses of Leray-Hirsch!
\end{remark}

\begin{remark} A linear algebraic group $\bH$ is said to be \emph{special} if every (\'etale-locally trivial) principal $\bH$-bundle is Zariski-locally trivial.   For instance, $\bH = \GG_a$, $\GG_m$, $\mathbf{SL_r}$, and $\mathbf{Sp_r}$ are special, and they are essentially (i.e., up to extensions) the only examples.  Since $\bB$ is always special, it is not too hard to verify that a reductive group $\bG$ is special iff all $\bG$-bundles $\P$ admit Zariski-local $\bB$-reductions iff for all $\P$ the generalized flag bundle $(\bG/\bB)_{\P}$ has Zariski-local sections.  Using this, it's not hard to show that a reductive connected group $\bG$ is special iff $\tau_{\bG} = 1$.
\end{remark}

The last remark can be amplified via the following geometric reformulation of the relationship between the flag manifold $\bG/\bB$ and torsion, which is essentially Th\'eor\`eme~2 of \cite{Grot}:
\begin{prop}\label{prop:BGsect} There exist integers $d_i > 0$ with $\gcd (d_i) = \tau_{\bG}$ satisfying the following property: For any  principal $\bG$-bundle $\P \to X$, there exist closed subschemes $Z_1, \ldots, Z_k \subset (\bG/\bB)_{\P} = \P/\bB$ such that $Z_i$ is generically finite of degree $d_i$ over $X$, $i=1,\ldots,k$.  In fact, there exist representations $V_1,\ldots,V_k$ of dimension $d = \dim \bG/\bB$ such that $[Z_i] = c_d\left((V_i)_{\P}\right) \cap [(\bG/\bB)_{\P}]$.
\end{prop}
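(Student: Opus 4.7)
The plan is to extract both the $d_i$ and the representations $V_i$ directly from the definition of $\tau_\bG$, and then to realize each $Z_i$ as the common scheme-theoretic zero locus of generic sections of certain ample line bundles on the flag bundle $p: \P/\bB \to X$.

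The algebraic input comes from unwinding the definition: by definition of $\tau_\bG$ (equivalently, by Grothendieck's identification with the top-degree invariant), the image of $\ch^d: \Sym^d \oh{\bT} \to \CH^d(\bG/\bB) = \ZZ \cdot [\pt]$ is the ideal $\tau_\bG \ZZ$. Since every character of $\bT$ is a difference of two dominant characters, any monomial $\lambda_1 \cdots \lambda_d$ expands as a signed sum of monomials in dominant characters; hence this ideal is already generated by values $\deg \ch(\lambda_1\cdots \lambda_d)$ with each $\lambda_j$ \emph{strictly} dominant (so each $\L(\lambda_j)$ is ample on $\bG/\bB$ and the top intersection numbers are strictly positive). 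I would pick finitely many such monomials $\mu_i = \lambda_{i,1}\cdots \lambda_{i,d}$ so that the positive integers $d_i := \deg \ch(\mu_i)$ satisfy $\gcd(d_i) = \tau_\bG$, and set $V_i := \bigoplus_{j=1}^d \AA^1_{\lambda_{i,j}}$ as a $\bB$-representation (with $\bN$ acting trivially); then $(V_i)_\P = \bigoplus_j \L(\lambda_{i,j})_\P$ on $\P/\bB$, and the Whitney sum formula gives $c_d((V_i)_\P) = \prod_j c_1(\L(\lambda_{i,j})_\P)$.

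Next I would construct $Z_i$ geometrically. Strict dominance of the $\lambda_{i,j}$ makes $(V_i)_\P$ relatively ample over $X$. A Bertini-type argument on the flag bundle (possibly after compactifying $X$, or after twisting each line bundle by $p^* L^{N}$ for a very ample $L$ on $X$ and $N \gg 0$ to guarantee global generation) produces generic sections $s_{i,j}$ of $\L(\lambda_{i,j})_\P$ whose vanishing loci are relative divisors meeting transversally; their scheme-theoretic intersection $Z_i := Z(s_{i,1}) \cap \cdots \cap Z(s_{i,d})$ is closed of pure codimension $d$ with
\[
[Z_i] \;=\; \prod_{j=1}^d c_1(\L(\lambda_{i,j})_\P) \cap [\P/\bB] \;=\; c_d((V_i)_\P) \cap [\P/\bB].
\]
Restricting this class to a generic fiber $\bG/\bB$ of $p$ yields $d_i \cdot [\pt]$, so $Z_i \to X$ is generically finite of degree $d_i$.

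The main technical obstacle is precisely that last construction: when $X$ is not projective the bundles $\L(\lambda_{i,j})_\P$ need not be globally generated on $\P/\bB$, and the natural twisting workaround alters the Chern class by lower-order correction terms coming from pullbacks of positive-codimension classes on $X$. The cleanest fix is to first compactify $(X,\P)$, apply Bertini there to obtain the sections $s_{i,j}$ globally, and then restrict back to recover the literal cycle-class identity $[Z_i] = c_d((V_i)_\P)\cap[\P/\bB]$; alternatively, one notes that the offending correction terms vanish after $p_*$ for dimension reasons, so that the degree computation $Z_i \to X$ of generic degree $d_i$ is unaffected and only the on-the-nose cycle-class identity on $\P/\bB$ requires the compactification step.
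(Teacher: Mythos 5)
The paper offers no proof of this Proposition: it attributes it to Grothendieck (``essentially Th\'eor\`eme~2 of [Grot]'') and moves on. So you are reconstructing a classical argument from scratch, and there is nothing internal to compare against.

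The algebraic half of your reconstruction is correct. Two small remarks. First, $\im(\ch^d)=\tau_\bG\ZZ$ is not quite ``by definition'' of $\tau_\bG$ --- the definition in the paper refers to the full cokernel of $\ch$, in all degrees --- but rather a theorem (Grothendieck, Demazure) that the top degree already detects the torsion index; you flag this equivalence, and it is a correct and independent input. Second, your $V_i$ are $\bB$-representations, so the bundle $(V_i)_\P$ is really $(V_i)_{\P^\bB}$ on $\P/\bB$; that is the only sensible reading, since a $\bG$-representation would produce a Chern class pulled back from $X$, which is killed by $p_*$ and so cannot yield a multisection of positive degree. The paper's literal notation is loose here and your interpretation is the right one.

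The genuine gap is the one you flag yourself, and neither of your two workarounds closes it. (a)~Compactifying $(X,\P)$ requires extending the $\bG$-torsor $\P$ across the boundary of a compactification of $X$, for which there is no general mechanism. (b)~Twisting by $p^*L^N$ over a non-proper $X$ does not by itself give global generation (ample on a quasi-projective scheme need not be globally generated); and even where it does, it changes the top Chern class, so the cycle-class identity $[Z_i]=c_d((V_i)_\P)\cap[\cdots]$ asserted in the Proposition is lost --- your projection-formula observation correctly shows the \emph{degree} is unaffected, but that only salvages the weaker first sentence. Moreover, in either set-up you still owe an argument that the zero locus of a generic section actually \emph{dominates} $X$ with the asserted degree: ``the restriction of the \emph{class} $[Z_i]$ to a generic fiber is $d_i[\pt]$'' is perfectly compatible with the actual scheme $Z_i$ being supported entirely over a proper closed subset of $X$. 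A genuine Bertini argument does deliver dominance and transversality on a dense open --- but only once global generation is in hand, which is precisely what the compactification/twisting was meant to supply, so the circle is not closed.

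In sum: the representation-theoretic and Chern-class reductions are on target, but the effectivity step --- producing actual closed subschemes $Z_i$ with the asserted class and dominance over a base that is merely quasi-projective --- remains open in your sketch. As a proof of what the paper actually uses downstream (in Prop.~6.1.1, only the degree-gcd statement is invoked) your strategy is much closer to complete, but as a proof of the Proposition as stated it is not.
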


\begin{remark} At least if $\bG$ is connected Grothendieck also proved the converse: There exists a principal $\bG$-bundles $\P \to X$ for which any such degree \emph{must} be divisible by $\tau_{\bG}$.
\end{remark}

\subsection{Algebraic Cobordism}\label{ss:algcob}
We summarize results on algebraic cobordism that we need:
\begin{prop}[Levine-Morel]\label{prop:lm}\mbox{}\begin{enumerate} \item $\omega_*(-)$ is an ``oriented Borel-Moore homology theory.''  That is, it has projective pushforwards, lci pullbacks, Chern classes, and these satisfy the usual compatibilities (e.g., base-change relations in Cartesian squares, push-pull relations for Chern classes of pullback bundles, formal group law for $c_1$ of a tensor product) and conditions (right-exact localizations sequence, $\AA^1$-homotopy invariance).
  \item The formal group law on $\omega_{*}(\Spec k)$ induces an isomorphism $\LL \isom \omega_{*}(\Spec k)$.
  \item Universality of $\omega_*(X)$ (as oriented Borel-Moore homology theory) induces a natural homomorphism $\omega_*(X) \to \CH_*(X)$.  This induces an \emph{isomorphism}, compatible with the structures in (i),
    \[ \sq{\omega}_*(X) = \omega_*(X) \otimes_{\LL} \ZZ \isom \CH_*(X) \]
  \item (A special case of (iii)) Suppose $f: Y \to X$ is generically finite of degree $d$.  Then, 
    \[ f_* [Y] = d [X] + \mm \omega_*(X) \qquad \text{i.e.,} \qquad  f_* [Y] = d [X] \in \sq{\omega}_*(X).\]
\end{enumerate}
\end{prop}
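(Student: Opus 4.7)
The plan is to treat this proposition as a curated summary of foundational work of Levine--Morel \cite{LM} and Levine--Pandharipande \cite{LP1}, so the task is essentially to marshal the existing inputs in the right order rather than to prove anything new. I would open by reminding the reader that \cite{LM} constructs $\Omega_*$ as the universal oriented Borel--Moore homology theory --- presenting it via cobordism cycles modulo explicit relations encoding the formal group law of $c_1$ --- and that \cite{LP1} identifies this with the geometric double-point presentation $\omega_*$ used throughout this note.

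For part (i), all of the structures (projective pushforwards, lci pullbacks, Chern classes) and their compatibilities are established in \cite{LM}; the deepest technical step is the construction of refined lci pullbacks compatible with pushforwards via deformation to the normal cone, and these descend through the comparison with the double-point presentation. For part (ii), the universal property of $\omega_*$ applied to the FGL satisfied by $c_1$ of tensor products of line bundles over $\Spec k$ gives a ring map $\LL \to \omega_{*}(\Spec k)$; Levine--Morel show this is an isomorphism by comparing their generators-and-relations presentation to the Lazard ring itself.

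For part (iii), $\CH_*$ is an oriented BM theory with additive formal group law, so universality yields a canonical comparison $\omega_*(X) \to \CH_*(X)$. Because the additive law corresponds under the isomorphism of (ii) to the augmentation $\LL \to \LL/\mm = \ZZ$, this comparison factors through $\sq{\omega}_*(X) = \omega_*(X)\otimes_\LL \ZZ$; the bijectivity of the resulting map is precisely the content of the final identification in \cite{LM}. Part (iv) then follows formally: the classical fact that $f_*[Y] = d[X]$ in $\CH_*(X)$ for $f$ generically finite of degree $d$, combined with (iii), forces $f_*[Y] - d[X] \in \ker(\omega_*(X) \to \sq{\omega}_*(X)) = \mm \cdot \omega_*(X)$.

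The main obstacle, were one reconstructing the theory from scratch, would be part (i): the careful construction of refined lci pullbacks and verification that the double-point relations produce a theory agreeing with $\Omega_*$. For the present note, however, no such obstacle arises since these are invoked as black boxes; the only genuine work is ensuring the statement of (iv) is formulated so as to follow cleanly from (iii) --- in particular noting that the $\mm \omega_*(X)$ correction term is exactly the kernel of the comparison to Chow.
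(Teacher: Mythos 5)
Your account is correct and matches the paper's treatment: the proposition is stated without proof, simply as a curated summary of results from \cite{LM} (with the double-point presentation coming from \cite{LP1}), which is exactly the stance you take. Your derivation of (iv) from (iii), via the classical fact that $f_*[Y]=d[X]$ in $\CH_*(X)$ together with $\ker(\omega_*(X)\to\sq{\omega}_*(X))=\mm\,\omega_*(X)$, is the intended reading of the parenthetical ``(A special case of (iii)).''
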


We recall the double point degeneration package:
\begin{na} We say that $t: \Y \to \PP^1$ is a \emph{double point degeneration} if $\Y \in \Sm_k$ is of pure dimension, $t$ is flat, and for any non-regular value $\zeta \in \PP^1$ the fiber $\Y_\zeta = t^{-1}(\zeta)$ decomposes as
  \[ \Y_\zeta = A \cup_D B \] where $A$ and $B$ are smooth divisors in $\Y$ intersecting transversely along $D$.  Note that $A, B, D$ are allowed to be disconnected or empty. (This holds trivially at regular values.)

  Let $\cN_{A/D}$ (resp., $\cN_{B/D}$) denote the normal bundle of $D$ in $A$ (resp., $B$).  One can check that $\cN_{A/D} \otimes \cN_{B/D} \isom \O_D$ and so the projective bundles \[ \PP_D(\O_D \oplus \cN_{A/D}) \to D \qquad \text{and} \qquad \PP_D(\O_D \oplus \cN_{B/D}) \to D \] are isomorphic, since the vector bundles being projectivized differ by twisting by a line bundle.  Let $\PP_D(\cN)$ denote either of these.
\end{na}

We now give the definition of $\omega_{*,\bH}$ along the lines of \cite{LP1} and \cite{LP}: 
\begin{defn}
For $X \in \Sch_k$, let \[ \Cycle_{*,\bH}(X) \eqdef \ZZ \cdot \left\{\left[\pi:Y \to X, \P\right] : \begin{gathered} \text{$Y \in \Sm_k$, irreducible of dim. $*$}\\\text{$\pi$ projective}\\\text{$\P$ principal $\bG$-bundle on $Y$}\end{gathered}\right\} \] denote the free abelian group on isomorphism classes of pairs $[\pi: Y \to X, \P]$ as indicated.  Note that not-necessarily irreducible $Y$ also have well-defined cycles, by taking a sum (in the group structure) over their connected components.
  
  Define $\omega_{*,\bH}(X)$ as the quotient graded abelian group
  \[ \omega_{*,\bH}(X) = \frac{\Cycle_{*,\bG}(X)}{\text{double point cobordism relations}} \] where by double point cobordism relations we mean the following:  Suppose $\Y \in \Sm_k$ is of pure dimension, $\P$ is a $\bG$-bundle on $\Y$, and $(\pi, t): \Y \to X \times \PP^1$ is a projective morphism such that $t: \Y \to \PP^1$ is a double double degeneration.  Suppose $0 \in \PP^1$ is a regular value, and $1 \in \PP^1$ not necessarily so.  Writing $\Y_{1} = A \cup_D B$ as above, the resulting \emph{double point cobordism relation} is
  \[ [\Y_0 \to X, \res{\P}{\Y_0}] = [A \to X, \res{\P}{A}] + [B \to X, \res{\P}{B}] - [\PP_D(\N) \to X, \res{\P}{\PP_D(\N)}]. \]
\end{defn}

\begin{na} Taking $\bH = \{\id\}$ this specializes to a definition of $\omega_{*}(X)$.  There are external product maps $\omega_{*,\bH}(X) \times \omega_{*,\bH'}(X') \to \omega_{*,\bH \times \bH'}(X \times X')$, and in particular each of these groups is a module over $\LL = \omega_{*}(k)$ by taking external products.
\end{na}

\begin{constr}\label{constr:vartheta} We now construct the map $\vartheta: \omega_{*,\bH} \to \MU_*(B\bH(\CC))$ of Theorem~\ref{thm:main}.  
  
  The construction relies on the following geometric description of the complex bordism $\MU_*$: 
  \[ \MU_*(X) = \frac{ \ZZ \cdot \left\{ [f: M \to X]: \begin{gathered} \text{$M$ is a closed stably almost-complex manifold}\\\text{and $f: M \to X$ is a continuous map}\end{gathered}\right\}}{\text{the usual cobordism relations}} \]

    We first define $\vartheta$ on the generators of $\omega_{*,\bH}$: $[Y \to \pt, \P]$ gives rise to the stably almost-complex manifold $Y(\CC)$ and a continuous map, defined up to homotopy, $f: Y(\CC) \to B\bH(\CC)$ classifying the $\bH$-bundle $\P$; the cobordism relations show that homotopic maps give the same element of $\MU_*(B\bH(\CC))$, so that we may unambiguously define
    \[ \vartheta\left([Y \to \pt, \P]\right) = [f: Y(\CC) \to B\bH(\CC)] \]
    It remains to verify that the double-point relations are satisfied so that $\vartheta$ descends to a map from $\omega_{*,\bH}$.  This follows by the usual argument that double point relations hold in complex cobordism.%<<TODO>>
\end{constr}

%%%%%%%%%%%%%%%%%%%%%%%%%%%%%%%%%%%%%%%%%%%%%%%%%

\section{Characteristic classes for $\bG$ bundles}\label{sec:charBG}
\subsection{Some notation}\label{ss:char-notation} We recall some convenient notation from \cite{CPZ} for dealing with algebras of Chern classes in the presence of a formal group law.

\begin{na} Suppose that $R$ is a (graded) $\LL$-algebra, that is a (graded) algebra equipped with a (graded) formal group law $F \in R\ps{x,y}$ ($-2 = \deg x = \deg y = \deg F$).  For any complete $R$-algebra $R'$, the formal group law gives rise to an abelian group 
  $(R', +_F)$ by defining $a +_F b = F(a,b) \in R'$. This determines a functor from complete $R$-algebras (with continuous homomorphisms) to abelian groups.
\end{na}

\begin{defn}  Suppose $M$ is an abelian group.  Define the \emph{twisted group algebra} $R\ps{M}$ by the universal property
  \[ \Hom_{\text{compl.}\atop{R'\text{alg}}}(R\ps{M}, R') = \Hom_{\text{gp}}(M, (R',+_F)). \]  There is also a variant suitably taking into account gradings.  We may explicitly construct $R\ps{M}$, together with a grading and filtration, as follows:
\end{defn}

\begin{constr} Let 
  \[ R\sq{\ps{M}} \eqdef R\left[x_m: m \in M\right]/\left( \begin{gathered} x_0 = 0\\ -_F x_m = x_{-m}  \\ x_m +_F x_{m'} = x_{m+m'}\end{gathered}\right) \]
    Elementary properties of formal group laws imply that it is a \emph{filtered} ring by the \emph{internal degree}: $\sq{I}_{\geq k}$ is spanned over $R$ by the monomials $x_{m_1} \cdots x_{m_{k'}}$ with $k' \geq k$.  
    Our grading conventions for $F$ imply that it is also a \emph{graded} ring via the \emph{total grading}: $\deg r x_{m_1} \cdots x_{m_k} = \deg r - 2 k$.
    
    Now let $R\ps{M}$ be the completion of $R\sq{\ps{M}}$ at the ideal $\sq{I}_{\geq 1}$.  We remark that it is graded (by \emph{total grading}), filtered (by \emph{internal degree}), and complete with respect to the filtration.
\end{constr}

\begin{example}\mbox{}
  \begin{itemize}
    \item Suppose $R_{\GG_a}$ is any graded algebra equipped with the \emph{additive} formal group law $F(x,y) = x+y$.  Then,
      \[ R_{\GG_a}\ps{M} = \oh{\Sym} M \] is the completed symmetric algebra.  In this case, the filtration by internal degree is a grading and the ring is bi-graded.
    \item Suppose $R_{\GG_m}$ is any (graded) algebra equipped with the \emph{multiplicative} formal group law $F(x,y) = x+y + \beta x y$, with $\beta \in R$ an invertible element of degree $2$.  (If we're willing to lose the grading, we can forget about $\beta$.)  Then, there is an isomorphism
      \[ R_{\GG_m}\ps{M} \isom \oh{R_{\GG_m}[M]} \qquad \text{where} \qquad R_{\GG_m}[M] = R_{\GG_m}\left[q^m : m \in M\right]/\left(q^{0}-1, q^{-m} q^m - 1, q^{m+m'}-q^m q^{m'} \right) \] with the completed group algebra.  The isomorphism is given by
      \[ x_m \mapsto  \frac{q^m-1}{\beta} \qquad \text{and} \qquad  q^m \mapsto 1 + \beta x_m   \] 
    \item Suppose $M = \ZZ^r$, with basis $\lambda_i = (\ldots,0,1,0,\ldots)$, $i=1,\ldots,r$.  Then, there is an isomorphism 
      \[ R\ps{t_1,\ldots,t_r} \isom R\ps{M}  \quad \deg t_i = -2, \qquad\text{given by $t_i \mapsto x_{\lambda_i}$}\] However, the map $M \to R\ps{M}$ ($m \mapsto x_m$) is complicated from this point of view: If $m = \sum n_i \lambda_i \in M$, then 
      \[ [n_1]_F t_1 +_{F} [n_2]_F t_2 +_F \cdots +_F [n_r]_F t_r \mapsto x_m. \]
    \item Suppose $\bT$ is a torus, so that the group $\oh{\bT}$ of characters is isomorphic as abelian group to $\ZZ^r$ for some $r$.  Choosing a basis of characters $\lambda_1, \ldots, \lambda_r$, $\oh{\bT} \isom \ZZ^r$, puts us in the previous situations.  That is, it gives rise to an isomorphism $R\ps{\oh{\bT}} \isom R\ps{t_1,\ldots,t_r}$, but the map $\oh{\bT} \to R\ps{t_1,\ldots,t_r}$ involves writing a character in terms of the fixed basis and then using the formal group law (e.g., $\lambda_1 \otimes \lambda_2$ goes  $t_1 +_F t_2 = t_1 + t_2 + (\text{higher filtration})$, $\lambda_1^{\otimes -1}$ goes to $-_F t_1 = -t_1 + (\text{higher filtration})$, etc.).
  \end{itemize}
\end{example}

Finally, we record a convenient Lemma for working with graded modules:
\begin{lemma}\label{lem:grNAK} Suppose $R$ is a non-negatively graded ring, and let $R_+$ denote the ideal of positively graded elements of $R$.  Suppose $M, N$ are non-negatively graded $R$-modules, and $\phi: M \to N$ a graded $R$-module homomorphism. \begin{enumerate}
  \item $\phi$ is surjective iff $\ol{\phi}: M/R_+ M \to N/R_+ N$ is surjective.
  \item $\phi$ is an isomorphism iff there exists a graded $R$-module isomorphism $\varphi: M \isom N$ such that $\ol{\phi}=\ol{\varphi}: M/R_+ M \to N/R_+ N$ coincide.
\end{enumerate}
\end{lemma}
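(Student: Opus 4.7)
The plan is to treat this as a graded version of Nakayama's lemma, with the non-negative grading playing the role of a local (maximal-ideal) assumption.

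For part (i), the forward direction is immediate. For the converse, I would set $C := N/\phi(M)$, a non-negatively graded $R$-module, and observe that surjectivity of $\ol{\phi}$ is precisely the statement $C/R_+ C = \coker\ol{\phi} = 0$, i.e.\ $C = R_+ C$. Then $C = 0$ follows by induction on degree: the base case $C_0 = (R_+ C)_0 = 0$ holds because $R_+$ lives in strictly positive degree while $C$ lies in non-negative degree, and inductively $C_n \subset (R_+ C)_n = \sum_{k \geq 1} R_k \, C_{n-k}$ vanishes by the inductive hypothesis.

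For part (ii), one direction is trivial: take $\varphi = \phi$. For the converse, given an iso $\varphi$ with $\ol{\varphi} = \ol{\phi}$, I would reduce to the case of an endomorphism by setting $\psi := \varphi^{-1} \circ \phi \colon M \to M$, which satisfies $\ol{\psi} = \id$. Writing $\psi = \id + g$, one has $g(M) \subset R_+ M$ and hence $g^k(M) \subset R_+^k M$. Since every element of $R_+^k M$ has total degree at least $k$, we get $(R_+^k M)_n = 0$ for $k > n$; that is, $g$ is locally nilpotent in each degree. The truncated geometric series $\sum_{k \geq 0}(-g)^k$ therefore defines a two-sided inverse of $\psi$, and so $\phi = \varphi \circ \psi$ is an isomorphism.

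I do not expect a serious obstacle beyond the careful degree bookkeeping just described; the non-negativity of the grading is essential, providing the base case in (i) and the local nilpotence in (ii), and without it the whole argument collapses.
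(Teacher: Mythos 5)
Your proof is correct and takes essentially the same route as the paper: both parts use the degree-bounded-below grading to run a graded Nakayama/descending-filtration argument, and for (ii) both reduce to an endomorphism of the form $\id + g$ with $g$ mapping into $R_+M$. The only cosmetic difference is that you invert $\id+g$ explicitly by a degreewise-finite geometric series, whereas the paper argues that $\ker\phi \subset \bigcap_N R_+^N M = 0$ and gets surjectivity separately from part (i); both rest on the same observation $g^k(M)\subset R_+^k M$.
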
 \begin{proof}\mbox{}\begin{enumerate} \item Follows from the graded Nakayama's Lemma (or an explicit induction on the graded degree). 
  \item The ``only if'' implication is trivial, and it suffices to prove ``if.''  Surjectivity of $\phi$ follows by applying (i).  Replacing $\phi$ by $\phi \circ \varphi^{-1}$, we may assume $\varphi = \id$.  Our hypothesis is then equivalent to the condition that $m - \phi(m) \in R_+ M$ for all $m \in M$.  To prove injectivity it suffices to show that $\ker \phi \subset (R_+)^N M$ for all $N$ for then $\ker \phi = 0$ by grading reasons.

  We prove this by induction on $N$.  Suppose $m \in (R_+)^N M \cap \ker \phi$.  Then, we may write $m = \sum r_i m_i$, with $r_i \in (R_+)^N$.  By assumption, $m_i - \phi(m_i) \in R_+ M$ for all $i$ so that
  \[ m = m - \phi(m) = \sum r_i (m_i - \phi(m_i)) \in (R_+)^{N+1} M. \qedhere\]
  \end{enumerate}
\end{proof}

\subsection{Overview}
\begin{na} For $\bG$ connected reductive, the Schubert (algebraic cell) decomposition determines the additive structure of $\omega_*(\bG/\bB)$ as $\LL$-module: It is free of rank $\Card{W}$, with $w \in W$ contributing a generator in dimension $\ell(w)$.  However, in order to benefit from the \emph{orientability} of $\omega_*$ we will need to exploit the relationship between this additive structure and Chern classes of line bundles.  It turns out we can get away with just statements about the torsion index, but this is just the tip of a richer picture.\footnote{Due to Bernstein-Gelfand-Gelfand for homology, Demazure for K-theory (and homology?), Bressler-Evens for complex cobordism.  This picture is developed in algebraic cobordism in \cite{CPZ}.}
\end{na}

\begin{defn} Suppose $\P^\bT \to Y$ is a principal $\bT$-bundle.  A character $\lambda \in \oh{\bT}$ determines a $1$-dimensional representation $\AA^1_\lambda$ of $\bT$ and hence an \emph{associated line bundle} $\L(\lambda) = (\AA^1_\lambda)_{\P^\bT}$ on $Y$.  There is a unique $\LL$-algebra homomorphism
  \[ c_{\bT}(-, \P^\bT): \LL\ps{\oh{\bT}} \to \End_\LL\left(\omega_*(Y)\right) \qquad \text{determined by} \qquad c_{\bT}(\lambda, \P^\bT) \cap - = c_1(\L(\lambda)) \cap -. \]
(By the usual abuse of notation, we denote application of the endomorphism $c_{\bT}(A, \P^\bT)$ by $c_{\bT}(A, \P^\bT) \cap -$.)
\end{defn}

\begin{na}\label{na:char-hom2} Suppose $\P \to Y$ is a principal $\bG$-bundle, $p: (\bG/\bB)_{\P} \to Y$ the associated $\bG/\bB$-bundle, and $\P^{\bT}$ the $\bT$-bundle on $(\bG/\bB)_{\P}$.  Consequently, there is a homomorphism $c_{\bT}(-,\P^\bT): \LL\ps{\oh{\bT}} \to \End\left(\omega_*((\bG/\bB)_\P)\right)$.  This is the cobordism, in-family, version of the \emph{characteristic homomorphism} of \ref{na:char-hom}.
\end{na}

\begin{na}
Our goal will be to refine (first rationally, then over $\ZZ\otg$) $c_{\bT}$ to \[ c_{\bG}: \LL\otg\ps{\oh{\bT}}^W \longrightarrow \End_{\LL\otg}(\omega_*(Y)\otg).\] 
\end{na}

\begin{defn}\label{defn:char-class} An assignment, for all $X \in \Sch_k$,
  \[ c(-): \left\{ \text{Principal $\bG$-bundles on $X$} \right\} \to \End(\omega_*(X)) \] is a characteristic class if it satisfies
  \begin{enumerate}
    \item (Compat. with pushforward.) Suppose $\pi: Y \to X$ is projective, and $\P$ a $\bG$-bundle on $X$.  Then, \[ \pi_*\left(c(\pi^* \P) \cap x\right) = c(\P) \cap \pi_* x. \]
    \item (Compat. with pullback.) Suppose $f: Y' \to Y$ is a map with $Y, Y' \in \Sm_k$, and $\P$ a $\bG$-bundle on $Y$.  Then,
      \[ f^*\left( c(\P) \cap x \right) = c(f^* \P) \cap f^* x. \]
      \end{enumerate}
\end{defn}

\begin{na}\label{na:char-prod}
 The two conditions above imply the following compatability with external product: Suppose $Y,Y' \in \Sm_k$, $\P$ a $\bG$-bundle on $Y$, and $p_1: Y \times Y' \to Y$ the projection.  Then,
      \[ c(p_1^* \P) \cap (y \times y') = (c(\P) \cap y) \times y'. \]  (Indeed, (ii) implies this for $y' = [Y']$.  Then, (i) extends this to pushforwards of such classes, which generate.)

 For $Y\in \Sm_k$ is smooth, it follows that $c(\P) \in \End(\omega_*(Y))$ is  given by capping with $c(\P) \cap [Y]$ regarded as an element of $\Omega^*(Y)$:
  \[ c(\P) \cap y = \Delta^* p_1^*(c(\P) \cap y) = \Delta^*\left(c(p_1^* \P) \cap ([Y] \times y)\right) = \Delta^*\left( (c(\P) \cap [Y]) \times y \right) \]
\end{na}

\begin{prop}\label{prop:BG} Characteristic classes for $\bG$-bundles are in bijection with $\Omega^*(B\bG)$, via $c \mapsto c(E\bG) \cap [B\bG]$.  Similarly, characteristic classes with coefficients in $R = \ZZ\otg$ or $\QQ$ are in bijection with $\Omega^*(B\bG) \oh{\otimes} R$.
\end{prop}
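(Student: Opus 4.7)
The plan is a Yoneda-style identification of characteristic classes with $\Omega^*(B\bG)$ via Totaro's algebraic model for $B\bG$.

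I begin by reducing the data. By axiom (i) and the fact that $\omega_*(X)$ is generated over $\LL$ by projective pushforwards $\pi_*[Y']$ from $Y' \in \Sm_k$, a characteristic class $c$ is determined on all of $\Sch_k$ by its restriction to $\Sm_k$. On smooth $Y$, observation \ref{na:char-prod} says $c(\P)$ acts as cap product with the single class $c(\P) \cap [Y] \in \Omega^*(Y)$. Hence the data of a characteristic class is equivalent to an assignment $\P \mapsto c(\P) \cap [Y] \in \Omega^*(Y)$ for each $\bG$-bundle on each $Y \in \Sm_k$, natural under pullback by axiom (ii); the pushforward compatibility for projective maps between smooth schemes is then a consequence of the projection formula.

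Next comes the comparison with $\Omega^*(B\bG)$. Using Totaro's construction, fix a cofinal system of open subschemes $U_n \subseteq V_n$ in representations of $\bG$ on which $\bG$ acts freely with $\mathrm{codim}_{V_n}(V_n \setminus U_n) \to \infty$; the smooth quotients $U_n/\bG$ approximate $B\bG$, carry tautological bundles $E_n \to U_n/\bG$, and $\Omega^*(B\bG) = \varprojlim_n \Omega^*(U_n/\bG)$ (with the implicit coniveau filtration). The forward map $c \mapsto (c(E_n) \cap [U_n/\bG])_n$ lies in this limit by pullback compatibility along the open inclusions $U_n/\bG \hookrightarrow U_{n+1}/\bG$. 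Conversely, given $\alpha = (\alpha_n) \in \varprojlim_n \Omega^*(U_n/\bG)$ and a $\bG$-bundle $\P \to Y$ with $Y \in \Sm_k$, pick a classifying map $f: Y \to U_n/\bG$ for $n$ large enough that one exists, and set $c_\alpha(\P) \cap [Y] := f^* \alpha_n$; independence of choices follows from the $\AA^1$-homotopy invariance of $\Omega^*$ (Proposition~\ref{prop:lm}(i)) together with the standard fact that any two such classifying maps become $\AA^1$-homotopic after enlarging $n$. Extend to $\Sch_k$ via axiom (i). The two constructions are mutually inverse by direct computation using axiom (ii). The variants with coefficients $R = \QQ$ or $R = \ZZ\otg$ follow by applying $- \otimes_\ZZ R$ throughout, in the completed sense already used for $\Omega^*(B\bG)\oh{\otimes}R$.

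The main technical obstacle is verifying that the recipe for $c_\alpha$ actually descends to a well-defined endomorphism of $\omega_*(X)$, i.e., respects double-point cobordism relations. For a double-point degeneration $t:\Y \to \PP^1$ equipped with a $\bG$-bundle $\P_\Y$ on $\Y$, applying the recipe to the generator $[\Y_0 \to X, \res{\P_\Y}{\Y_0}]$ and to the combination $[A] + [B] - [\PP_D(\N)]$ (each with its restriction of $\P_\Y$) must yield equal elements of $\omega_*(X)$. After choosing a classifying map $F: \Y \to U_n/\bG$, the class $F^* \alpha_n \in \Omega^*(\Y)$ behaves compatibly under restriction to the components of $\Y_\zeta$; the required identity then reduces to the statement that cap product with a fixed class in $\Omega^*(\Y)$ preserves the double-point relation inside $\omega_*(\Y)$, which is a standard consequence of the oriented Borel-Moore formalism of Proposition~\ref{prop:lm}(i) (compare the treatment of Chern class operations in \cite{LM}).
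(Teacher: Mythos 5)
Your reduction to $\Sm_k$ and your identification of a characteristic class on smooth $Y$ with a natural family $\P\mapsto c(\P)\cap[Y]\in\Omega^*(Y)$ are both correct and parallel the paper. The genuine gap is in your construction of the inverse map $\alpha\mapsto c_\alpha$: you write "pick a classifying map $f:Y\to U_n/\bG$ for $n$ large enough that one exists," but for a general $Y\in\Sm_k$ (which in this paper means smooth \emph{quasi-projective}) such a map need not exist for \emph{any} finite $n$. Totaro's finite approximations only classify $\bG$-bundles over smooth \emph{affine} schemes — on a non-affine $Y$ a bundle may fail to be pulled back from any $U_n/\bG$ (already for $\bG=\GG_m$: a negative line bundle on a projective curve has no nonzero sections, so no map to a finite $\PP^N$). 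The same issue poisons your appeal to uniqueness of classifying maps up to $\AA^1$-homotopy, which is again an affine-local statement. The paper circumvents this exactly at this point via Jouanolou's trick: replace $Y$ by an affine torsor $Y'\to Y$ under a vector bundle, so that $\Omega^*(Y')\isom\Omega^*(Y)$ by $\AA^1$-invariance and $\res{\P}{Y'}$ \emph{does} admit a classifying map, unique up to $\AA^1$-homotopy after enlarging $n$; independence of the choice of $Y'$ is then handled by dominating any two choices by a fiber product. Your proof needs this insertion to go through.

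A secondary, smaller point: in your last paragraph you phrase the well-definedness check in terms of double-point relations on generators $[\Y_0\to X,\res{\P_\Y}{\Y_0}]$ of $\omega_{*,\bG}(X)$, but that is the wrong group — a characteristic class is an endomorphism of $\omega_*(X)$ (bundles do not appear in the cycles), so the relevant relations are the double-point relations of $\omega_*(X)$ with $\P$ held fixed on $X$ and pulled back along $\Y\to X$. The substance of your argument (cap product with the restrictions of a single class $F^*\alpha_n\in\Omega^*(\Y)$ is compatible with the double-point relation, by the oriented Borel--Moore formalism) is fine once restated in the right group, but as written it conflates the two theories; the compatibility of characteristic classes with double-point relations for $\omega_{*,\bG}$ is a separate matter, handled in Section~\ref{sec:operations} of the paper.
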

\begin{proof} By (i), every characteristic class is determined by its restriction to $\Sm_k$ since $\omega_*(X)$ is generated by pushforwards from smooth schemes.  Injectivity would follow from (ii) + \ref{na:char-prod}, if every $\bG$-bundle $\P$ on a smooth scheme were pulled back from (a finite approximation to) the universal bundle on $B\bG$. This is not quite true, but there is a well-known workaround: By Jouanolou's trick, we may replace $Y \in \Sm_k$ (recall, this includes quasi-projective!) by a smooth \emph{affine} scheme $Y'$ which is $\AA^1$-equivalent to it (in fact, a torsor for a vector bundle over $Y$); then $\res{\P}{Y'}$ \emph{will} be pulled back from $B\bG$ (or rather, a finite approximation) by a map which is unique up to $\AA^1$-homotopy (in a potentially larger approximation).  So the assignment is injective.  

Reversing the previous argument shows that it is also surjective: To define $c$ on smooth schemes, we need only verify independence of the choice of affine scheme $Y'$ and map to $B\bG$.  By taking fiber products to dominate any two choices of $Y'$, we reduce to showing independence of the map to $B\bG$, which follows by the claim above that the map is unique up to $\AA^1$-homotopy.  Verifying that property (i) holds on smooth schemes (by general push-pull properties), we extend to arbitrary varieties by (i).
\end{proof}

\subsection{Rationally}
\begin{na} Applying $c_\bT$ to the ``universal bundle'' $E\bT \to B\bT$ induces an isomorphism $\LL\ps{\oh{\bT}} \isom \Omega^*(B\bT)$, and analogously with coefficients in $\ZZ\otg$ or $\QQ$.  It is no surprise that $\Omega^*(B\bG)_\QQ \isom \Omega^*(B\bT)_\QQ^W \isom \LL_\QQ\ps{\oh{\bT}}^W$.  (Between when we started writing this document and the present, a reference for this fact has appeared in \cite{Krishna}.)  Combining these facts with Prop.~\ref{prop:BG}, we obtain the following Lemma.
\end{na}

\begin{lemma}\label{lem:cBG-Q} Suppose $X \in \Sch_k$, and $\P$ is a principal $\bG$-bundle on $X$. There is an $\LL_\QQ$-module homomorphism
  \[ c_{\bG}(-,\P): \LL_\QQ\ps{\oh{\bT}}^W \to \End(\omega_*(X)) \] satisfying the conditions of Defn.~\ref{defn:char-class}.
  
  Suppose furthermore that $\P$ has a $\bB$-reduction $\P^\bB$, and let $\P^{\bT} = (\bT)_{\P^{\bB}}$ be the associated principal $\bT$-bundle.  Then, $c_{\bG}(-,\P) = c_{\bT}(-, \P^{\bB})$.
\end{lemma}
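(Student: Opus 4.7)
The plan is to deduce this lemma as a nearly formal consequence of the results already assembled: Proposition~\ref{prop:BG} (characteristic classes correspond to elements of $\Omega^*(B\bG)$), the established isomorphism $c_\bT\colon \LL\ps{\oh{\bT}} \isom \Omega^*(B\bT)$ from the paragraph preceding the lemma, and the isomorphism $\Omega^*(B\bG)_\QQ \isom \Omega^*(B\bT)_\QQ^W \isom \LL_\QQ\ps{\oh{\bT}}^W$ cited from Krishna.

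First, I would use Proposition~\ref{prop:BG} to convert the question into producing characteristic classes indexed by $\LL_\QQ\ps{\oh{\bT}}^W$. By the cited isomorphism, any element $A \in \LL_\QQ\ps{\oh{\bT}}^W$ corresponds to an element of $\Omega^*(B\bG)_\QQ$, which by Proposition~\ref{prop:BG} defines a characteristic class $c_\bG(A, -)$ satisfying the pushforward and pullback compatibilities of Definition~\ref{defn:char-class}. The assignment $A \mapsto c_\bG(A, \P)$ is an $\LL_\QQ$-module homomorphism (indeed a ring homomorphism to $\End(\omega_*(X))$) because Proposition~\ref{prop:BG} itself is a ring isomorphism and the identification $\Omega^*(B\bG)_\QQ \isom \LL_\QQ\ps{\oh{\bT}}^W$ is one of $\LL_\QQ$-algebras.

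Next, to prove the comparison $c_\bG(-, \P) = c_\bT(-, \P^\bT)$ when $\P$ admits a $\bB$-reduction $\P^\bB$, I would work naturality through two successive maps of classifying spaces. The $\bB$-reduction gives a classifying map that factors (up to $\AA^1$-homotopy, via Jouanolou's trick as in the proof of Proposition~\ref{prop:BG}) through $B\bB$. Since $\bN$ is unipotent, the projection $B\bB \to B\bT$ is an $\AA^1$-equivalence, inducing $\Omega^*(B\bB) \isom \Omega^*(B\bT)$, so every $\bB$-bundle yields a well-defined $\bT$-bundle (namely $\P^\bT$) whose characteristic classes are $c_\bT(-, \P^\bT)$. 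By functoriality of Proposition~\ref{prop:BG}, the class $c_\bG(A, \P)$ is the pullback to $\P$ of the image of $A$ under $\Omega^*(B\bG)_\QQ \to \Omega^*(B\bT)_\QQ$. The key input, which I would take from the Krishna reference, is that this map is precisely the inclusion $\LL_\QQ\ps{\oh{\bT}}^W \hookrightarrow \LL_\QQ\ps{\oh{\bT}}$; hence $c_\bG(A, \P) = c_\bT(A, \P^\bT)$ for any $W$-invariant $A$.

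The only real subtlety, and hence the main thing to double-check carefully, is the compatibility of the two displayed identifications: namely that the isomorphism $\Omega^*(B\bG)_\QQ \isom \LL_\QQ\ps{\oh{\bT}}^W$ provided by Krishna is realized, under the characteristic-class bijection of Proposition~\ref{prop:BG}, by literally restricting to $\bB$-reductions and then to the associated $\bT$-bundle and applying $c_\bT$. This is essentially a matter of tracing through definitions and the $\AA^1$-invariance $\Omega^*(B\bB) \isom \Omega^*(B\bT)$, but it is the one place where the soft, representable-functor argument used throughout this subsection has to be reconciled with the concrete formula for $c_\bT$ on associated line bundles.
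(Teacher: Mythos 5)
Your proposal follows essentially the same route as the paper: the existence of $c_\bG$ is obtained from Prop.~\ref{prop:BG} combined with the identifications $\LL\ps{\oh{\bT}} \isom \Omega^*(B\bT)$ and (via Krishna) $\Omega^*(B\bG)_\QQ \isom \Omega^*(B\bT)_\QQ^W$, and the comparison with $c_\bT$ in the presence of a $\bB$-reduction is deduced by factoring the classifying map through $B\bB$ and using $\Omega^*(B\bB)\isom\Omega^*(B\bT)$. The paper's proof is a one-sentence version of exactly this argument; your write-up just makes explicit the step that the map $\Omega^*(B\bG)_\QQ \to \Omega^*(B\bT)_\QQ$ is the inclusion of $W$-invariants, which the paper leaves implicit.
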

\begin{proof} A $\bB$-reduction induces a factorization of the map $Y' \to B\bG$ (in the proof of Prop.~\ref{prop:BG}) through $B\bT$, and there is a well-known isomorphism $\Omega^*(B\bT) \isom \Omega^*(B\bB)$ (since $B\bT \to B\bB$ is a Zariski-locally trivial bundle with fibers $\bB/\bT \isom \bN$ scheme-theoretically isomorphic to affine spaces).
\end{proof}

\begin{na}\label{na:cG-split} In light of the last sentence of the Lemma, we may use the splitting principal (see Lemma~\ref{lem:equaliz} for one formulation) to give a description of $c_{\bG}$.  Suppose $X$ and $\P$ are as in the Lemma, and consider $p: (\bG/\bB)_{\P} \to Y$; the $\bG$-bundle $p^* \P$ has a $\bB$-reduction $\P^\bB$, with associated $\bT$-bundle $\P^\bT$.  Then,
    \[ c_{\bT}(A,\P^\bT) \cap p^* x = p^*(c_{\bG}(A, \P) \cap x) \] and this requirement uniquely determines $c_{\bG}$ since $p^*$ is injective.  Then, the claim of the Lemma is essentially equivalent to the claim that for $A \in \LL_\QQ\oh{\ps{\bT}}^W$, the operation $c_{\bT}(A, \P^\bT)$ preserves $\im p^* \subset \omega_*((\bG/\bB)_{\P})_\QQ$.  The above proof ``reduces this to the universal example,'' but it is possible to avoid this detour and give a more direct argument by constructing $c_{\bG}$ directly using Chern classes of vector bundles associated to representations (analogous to Vistoli's original construction in Chow theory).
\end{na}
  
\subsection{Better than rationally}\label{ss:charBG-2}
\begin{na} By Remark~\ref{rmk:depends}, and its footnote, this section is not strictly necessary for the proofs of our results.  However, we include it as it may be of independent interest.
\end{na}

We have the following amusing strengthening of the ``splitting principle'' for general flag bundles:
\begin{lemma}\label{lem:equaliz} The diagram
  \[ \xymatrix{\omega_*(Y)\ar[r]^-{p^*} & \omega_{*+d}\left( (\bG/\bB)_{\P}\right) \ar@<1ex>[r]^-{p_1^*} \ar@<-1ex>[r]_-{p_2^*} & \omega_{*+2d}\left( (\bG/\bB)_\P \times_Y (\bG/\bB)_\P\right) } \] becomes a split equalizer diagram after inverting $\tau_\bG$. (Here $d = \dim \bG/\bB$.)
\end{lemma}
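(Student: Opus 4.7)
Write $X = (\bG/\bB)_\P$ and $X_2 = X \times_Y X$, with the two projections $p_1, p_2 : X_2 \to X$ and common composite $p \circ p_1 = p \circ p_2$. My plan is to construct explicit $\LL\otg$-linear splitting maps
\[ r : \omega_*(X)\otg \to \omega_*(Y)\otg, \qquad s : \omega_*(X_2)\otg \to \omega_*(X)\otg \]
satisfying the three identities characterizing a split equalizer, namely $r \circ p^* = \id$, $s \circ p_1^* = \id$, and $s \circ p_2^* = p^* \circ r$. The raw material is a single class $\zeta = \sum a_i \zeta_i \in \Omega^d(X)\otg$ where $\zeta_i = c_d((V_i)_\P)$ and the $V_i$, $d_i$ come from Prop.~\ref{prop:BGsect}; after inverting $\tau_\bG$, pick $a_i \in \ZZ\otg$ with $\sum a_i d_i = 1$.

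First I take $\tilde r(x) := p_*(\zeta \cap x)$ as a candidate retraction of $p^*$. For a generator $y = [f : W \to Y]$ of $\omega_*(Y)$, base change along $f$ identifies $\zeta \cap p^* y$ with the analogous combination of Chern classes on the flag bundle of $f^*\P$, and then Prop.~\ref{prop:BGsect} applied to $f^*\P \to W$ together with Prop.~\ref{prop:lm}(iv) gives $\tilde r(p^* y) \equiv y \pmod{\mm\,\omega_*(Y)\otg}$. Since $\omega_*(Y)\otg$ is a non-negatively graded $\LL\otg$-module, Lemma~\ref{lem:grNAK}(ii) (with $\varphi = \id$) upgrades this to say that $\tilde r \circ p^*$ is an automorphism. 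Setting $r := (\tilde r \circ p^*)^{-1} \circ \tilde r$ then gives a genuine retraction.

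Analogously, define $\tilde s(u) := p_{1*}(p_2^* \zeta \cap u)$. Viewing $X_2 = (\bG/\bB)_{p^*\P}$ as the flag bundle of $p^*\P$ via $p_1$, the identical argument produces an automorphism $\tilde s \circ p_1^*$, so $s := (\tilde s \circ p_1^*)^{-1} \circ \tilde s$ is a retraction of $p_1^*$. For the third identity, I use two base-change facts in the Cartesian square: compatibility of pullback with cap gives $p_2^* \zeta \cap p_2^* x = p_2^*(\zeta \cap x)$, and the base-change identity $p_{1*} \circ p_2^* = p^* \circ p_*$ (valid since $p$ is smooth projective) then yields $\tilde s \circ p_2^* = p^* \circ \tilde r$. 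Combining this with $p_1^* \circ p^* = p_2^* \circ p^*$ forces $(\tilde s \circ p_1^*) \circ p^* = p^* \circ (\tilde r \circ p^*)$; inverting the two automorphisms gives $s \circ p_2^* = p^* \circ r$, as required.

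The main point of care is the graded Nakayama step: one must check that all constructed operators are $\LL\otg$-linear (automatic, since $\zeta$ is a fixed cohomology class and cap product, pullback, and pushforward each respect the $\LL$-action) and that the non-negative grading hypothesis of Lemma~\ref{lem:grNAK} applies to both $\omega_*(Y)\otg$ and $\omega_*(X)\otg$ (immediate). Everything else reduces mechanically to the projection formula, base change in Cartesian squares of smooth projective maps, and Prop.~\ref{prop:BGsect}.
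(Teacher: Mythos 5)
Your proof is correct and follows essentially the same approach as the paper: push forward against a degree-$d$ Chern class normalized via Prop.~\ref{prop:BGsect}, verify the split-equalizer identities modulo $\mm$, and upgrade via Lemma~\ref{lem:grNAK}. The only differences are cosmetic — you build $\zeta=\sum a_i c_d((V_i)_\P)$ directly from Prop.~\ref{prop:BGsect} and normalize so $\sum a_i d_i = 1$, where the paper abstractly picks $A\in\LL\ps{\oh\bT}$ with $p_*(c_\bT(A)\cap[\bG/\bB])=\tau_\bG$ and checks \'etale-locally; and you spell out the algebra showing $s\circ p_2^*=p^*\circ r$ survives the renormalization, a step the paper compresses into ``modifying $s$ and $t$ by these isomorphisms.''
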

\begin{proof}
  Let $A \in \LL\ps{\oh{\bT}}$, in filtration $d$ and degree $d$, be such that $p_*(c_\bT(A) \cap [\bG/\bB]) = \tau_{\bG} \in \CH_0(\pt)$.  Define
  \[ s: E_*( (\bG/\bB)_{\P}) \to E_{*-d}(Y) \qquad \text{by} \qquad s(x) = p_*(c_\bT(A, \P^\bT) \cap x) \]
  \[ t: E_*( (\bG/\bB)_\P \times_Y (\bG/\bB)_\P) \to E_{*-d}( (\bG/\bB)_\P) \qquad \text{by} \qquad t(x) = (p_1)_*\left( c_\bT(A, (p_2)^* \P^\bT) \cap x\right). \]
  I claim that $s \circ p^* \equiv \id \pmod{\mm}$, $t \circ (p_1)^* \equiv \tau_{\bG} \id \pmod{\mm}$, and $t \circ (p_2)^* = p^* \circ s$ (on the nose); in other words, up to $\mm = \LL_{\geq 1}$, the diagram is a \emph{split} equalizer.  Lemma~\ref{lem:grNAK} implies that $s \circ p^*$ and $t \circ (p_1)^*$ are isomorphisms and (modifying $s$ and $t$ by these isomorphisms) the diagram is a split equalizer.
  
More explicitly:  The composites agree, so it suffices to prove that $p^*$ is injective and that $p^*$ is surjective onto the equalizer $K = \ker(p_1^* - p_2^*)$.  By Lemma~\ref{lem:grNAK}, $s \circ p^*$ is an isomorphism and in particular $p^*$ is injective.  Note that $p^* \circ s$ restricts to an endomorphism of $K$, and by the above it coincides with the identity modulo $\LL_{\geq 1}$; by Lemma~\ref{lem:grNAK}, it is an isomorphism and in particular surjective.

Now we prove the claim.  The compatibility of Chern classes with pullback and ``base-change'' for pullbacks-pushforwards implies
\[ t \circ p_2^*(x) = (p_1)_*\left(c_\bT(A, p_2^* \P^\bT)\cap  p_2^* x\right) = (p_1)_* p_2^*\left(c_\bT(A, \P^{\bT}) \cap x\right) = p^* p_*\left(c_{\bT}(A, \P^{\bT}) \cap x\right) = p^* \circ s(x). \]
By Prop.~\ref{prop:lm}, it suffices to prove the other two equalities in Chow theory.  These are standard, but we include a proof for completeness:

We may as well assume $x = f_* [X]$ for $f: X \to Y$ a map from a smooth connected variety and $[X] \in \CH^{\dim X}(X)$ the fundamental class, since such elements span Chow.  Let $p'$ and $f'$ be defined by the pullback square
\[ \xymatrix{ 
(\bG/\bB)_{f^* \P}\ar[d]_{p'} \ar[r]^{f'} & (\bG/\bB)_{\P}\ar[d]^p \\
X \ar[r]_f & Y } \] We first reduce to proving the analogous claim over $X$, and only for $[X]$: Since
\begin{align*} s \circ p^*(x) &= p_*\left(c_\bT(A,\P^\bT) \cap p^* f_*[X]\right) =p_*\left(c_\bT(A,\P^\bT) \cap f'_* (p')^* [X]\right)\\
&=p_* f'_*\left(c_\bT(A,(f')^* \P^\bT) \cap (p')^* [X]\right)  =f_* p'_* \left(c_\bT(A,(f')^* \P^\bT) \cap (p')^* [X]\right)  \end{align*}
it suffices to show that $p'_* \left(c_\bT(A,(f')^* \P^\bT) \cap (p')^* [X]\right) = \tau_{\bG} [X]$. We're asking for an equality of elements in $\CH^{\dim X}(X) \isom \ZZ$, i.e., numbers. We may check this equality after base-change to an \'etale open trivializing $\P$ (or a closed point $x \in X$), where it follows by hypothesis on $A$.  The third computation is analogous.
\end{proof} 

\begin{corollary}\label{corr:charBG2} \begin{enumerate}
  \item $\Omega^*(B\bG)\otg \isom \Omega^*(B\bT)\otg^W$.
  \item Suppose $X \in \Sch_k$, and $\P$ is a principal $\bG$-bundle on $X$. There is an $\LL\otg$-module homomorphism
  \[ c_{\bG}(-,\P): \LL\otg\ps{\oh{\bT}}^W \to \End(\omega_*(X)\otg) \] satisfying the conditions of Defn.~\ref{defn:char-class}.
  
  Suppose furthermore that $\P$ has a $\bB$-reduction $\P^\bB$, and let $\P^{\bT} = (\bT)_{\P^{\bB}}$ be the associated principal $\bT$-bundle.  Then, $c_{\bG}(-,\P) = c_{\bT}(-, \P^{\bB})$.
\end{enumerate}
\end{corollary}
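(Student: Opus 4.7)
The plan is to deduce (ii) formally from (i) via Proposition~\ref{prop:BG}, in direct analogy with how Lemma~\ref{lem:cBG-Q} was deduced rationally: once $\Omega^*(B\bG)\otg \isom \LL\otg\ps{\oh{\bT}}^W$ is in hand, characteristic classes with values in $\LL\otg\ps{\oh{\bT}}^W$ are produced directly by Proposition~\ref{prop:BG}, and a $\bB$-reduction of $\P$ furnishes a lift of the classifying map to $B\bB$. Since $B\bB \to B\bT$ is a Zariski-locally trivial $\bN$-bundle with affine-space fibers, $\Omega^*(B\bB) \isom \Omega^*(B\bT) = \LL\ps{\oh{\bT}}$, so $c_\bG(-,\P) = c_\bT(-,\P^\bB)$ on such $\P$. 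The real work is in (i).

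For (i), I apply Lemma~\ref{lem:equaliz} to (finite-dimensional Totaro-style approximations of) the universal bundle $E\bG \to B\bG$, using $(\bG/\bB)_{E\bG} = E\bG/\bB \simeq B\bB$. Passing to the limit and reindexing by Poincar\'e duality, the lemma yields in cohomology a split equalizer
$$\Omega^*(B\bG)\otg \;\hookrightarrow\; \Omega^*(B\bT)\otg \;\rightrightarrows\; \Omega^*(B\bB \times_{B\bG} B\bB)\otg,$$
with the two parallel arrows being pullback along the two projections. So it suffices to identify the equalizer with $\LL\otg\ps{\oh{\bT}}^W = \Omega^*(B\bT)\otg^W$.

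One inclusion is standard: any class pulled back from $B\bG$ to $B\bT$ is automatically $W$-invariant, since $W = N_\bG(\bT)/\bT$ acts on $\bT$ by restriction of inner automorphisms of $\bG$, which act trivially on $B\bG$. For the reverse inclusion, I construct, for each $w \in W$, a section $s_w: B\bT \to B\bB \times_{B\bG} B\bB$ with $p_1 \circ s_w = \id_{B\bT}$ and $p_2 \circ s_w$ equal to the $W$-action of $w$ on $B\bT$: lifting $w$ to $n_w \in N_\bG(\bT)$, send a $\bT$-bundle $\P^\bT$ to the pair consisting of its standard $\bB$-extension and the $n_w$-conjugate reduction of the underlying $\bG$-bundle. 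Then for $x$ in the equalizer one gets $x = s_w^* p_1^* x = s_w^* p_2^* x = w^* x$, so $x$ is $W$-invariant.

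The main obstacle is verifying carefully that $p_2 \circ s_w$ genuinely induces the Weyl action of $w$ on $\Omega^*(B\bT)$, which comes down to tracking how the $n_w$-twisted $\bB$-reduction interacts with the associated $\bT$-quotient. A more conceptual alternative avoids explicit sections: stratify $B\bB \times_{B\bG} B\bB$ by the Bruhat decomposition $\bB \backslash \bG / \bB = \coprod_{w \in W} \bB \backslash \bB w \bB / \bB$, observe that each stratum is an affine-space bundle over $B\bT$ on which the two projections induce $\id$ and $w$ up to $\AA^1$-homotopy, and conclude via the localization sequence in algebraic cobordism together with $\AA^1$-invariance.
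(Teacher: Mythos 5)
Your proof of (ii) from (i) matches the paper's (which simply says the argument is the same as Lemma~\ref{lem:cBG-Q}), and your starting point for (i) — applying Lemma~\ref{lem:equaliz} to the universal bundle to get a split equalizer, then noting that $\im p^*$ clearly lies in the $W$-invariants — is also the paper's. But your argument for the hard containment (invariants $\subseteq \im p^*$) has a genuine logical gap.

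The ``explicit sections'' argument proves the wrong direction. You construct $s_w: B\bT \to B\bB \times_{B\bG} B\bB$ with $p_1 s_w = \id$ and $p_2 s_w = w$, and then conclude: ``for $x$ in the equalizer one gets $x = s_w^* p_1^* x = s_w^* p_2^* x = w^* x$.'' That is precisely the easy inclusion (equalizer $\subseteq$ invariants) restated. To prove the reverse you would need: if $x$ is $W$-invariant, then $p_1^* x = p_2^* x$ in $\Omega^*(B\bB \times_{B\bG} B\bB)\otg$. Knowing that $p_1^*x$ and $p_2^*x$ agree after restriction along the finitely many sections $s_w$ does not give this — those sections hit only a measure-zero locus (one section into each Bruhat stratum), and agreement there does not propagate to the whole space.

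The Bruhat-decomposition alternative is closer in spirit to what would actually be needed, but as stated it isn't a proof. The localization sequence in algebraic cobordism is only right-exact, so ``vanishes on all strata'' does not imply ``vanishes.'' What you really need is a cellular-fibration / flag-bundle theorem for $\bG/\bB$-bundles with $\ZZ\otg$-coefficients — essentially the paper's Prop.~\ref{prop:flagbundle}(i). But the paper's sketch of that proposition invokes the present Corollary, so following this route naively risks circularity, and the paper explicitly says it does not carry the direct approach out (``We don't need this approach, so we haven't carried this out in detail!''). The paper instead closes the gap with a cheaper trick you've overlooked: one already knows from the rational theory that $h = \id - p^*s'$ kills the rational $W$-invariants, and $\LL\otg\ps{\oh{\bT}}$ is torsion-free, so $h$ kills the integral $W$-invariants too; the only remaining work is bookkeeping with the coniveau filtration to make the map $s'$ and the operator $h$ well-defined on the completed cohomology. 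This ``rational statement plus torsion-freeness'' device is the real content of the paper's proof and is what your proposal is missing.
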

\begin{proof} The proof of (ii) is exactly the same as Lemma~\ref{lem:cBG-Q}.  
  
Before giving a rigorous proof of (i), we sketch the idea: We will apply Lemma~\ref{lem:equaliz} with $Y = B\bG$, $\P = E\bG$, $(\bG/\bB)_{\P} = B\bT$.  This tells us that the pullback map $p^*$ is a split injection, and its image is certainly contained in the Weyl invariants.  Letting $s'$ be such that $s' \circ p = \id$, we have $\im p^* = \ker(p \circ s' - \id)$.  We saw above that rationally the Weyl invariants are in this kernel; since $\Omega^*(B\bT)$ is torsion-free, this proves the reverse containment. Making this rigorous will require dealing with filtration and completion issues.
  % and since $\Omega^*(B\bT)\otg = \LL\otg\ps{\oh{\bT}}$ is torsion-free the same holds for $\Omega^*(B\bG)\otg$.  
 % Now, consider the diagram
 % \[ \xymatrix{
 % \Omega^*(B\bG) \oh{\otimes} \ZZ\otg \ar@{^{(}->}[r]\ar[d] & \Omega^*(B\bT) \oh{\otimes} \ZZ\otg \isom \LL\otg\ps{\oh{\bT}} \ar@{^{(}->}[d]\\
 % \Omega^*(B\bG) \oh{\otimes} \QQ \ar@{^{(}->}[r] & \Omega^*(B\bT) \oh{\otimes} \QQ  \isom \LL_\QQ\ps{\oh{\bT}}
 % } \]

Recall that we regard $\Omega^*(B\bT)\otg$ and $\Omega^*(B\bG)\otg$ as filtered by their coniveau filtrations.  The maps $p^*$ and $s = p_*(c_\bT(A, \P) \cap -)$ both preserve coniveau filtration,\footnote{In general, pullback preserves coniveau filtration, Chern classes of line bundles increase it by $1$, and pushforward decreases it by relative dimension. So, $c_\bT(A,\P)$ increases it by $d$ and $p_*$ decreases it by $d$.} and so actually determine a split inclusion
\[\xymatrix@R=2pc{ \dfrac{\Omega^* B\bG}{F^j \Omega^* B\bG}\otg  \ar@<1ex>[r]^{p^*} & \dfrac{\Omega^* B\bT}{F^j \Omega^* B\bT}\otg  \ar@{..>}@/^3ex/[l]^{s} }\]  for each $j$.  (This is just a way of rewording the approximation argument in a way independent of the chosen approximations.)  

For notational convenience, set $M = \LL\otg\ps{\oh{\bT}}$, $M_\QQ = \LL_\QQ\ps{\oh{\bT}}$, $N = \Omega^*(B\bG)\otg$, $N_\QQ=\Omega^*(B\bG)_\QQ$.  We saw that $s \circ p^*: N \to N$ was filtration preserving and invertible, so that $s' = (s \circ p^*)^{-1} \circ s: M \to M$ is a filtration preserving map satisfying $s' \circ p = \id$.  Then, $h = \id - p^* s': \Omega^*(B\bT)\otg \to \Omega^*(B\bT)\otg$ preserves the filtration, so that $\im p^* = \ker h$ on each quotient $M/F^j M$ (and of course on $M$).  It now suffices to show that $h$ vanishes on $M^W$, and since $M$ injects into $M_\QQ$ it suffices to show that $h_\QQ$ vanishes on the image of $M^W$ in $M_\QQ$; but this image is contained in $(M_\QQ)^W = N_\QQ$, so we're done.
\end{proof}

\begin{na} The splitting principle description of $c_\bG$ of \ref{na:cG-split} is of course still valid.  In this case, however, a direct approach (avoiding the reduction to the universal case) is more difficult as it was very convenient to make use of torsion-freeness which certainly need not hold in $\omega_*(X)$ for arbitrary $X$.  
    
There should however be the following more direct approach, giving in fact this type of results in slightly more generality, e.g., only no $2$-torsion. (We don't need this approach, so we haven't carried this out in detail!): A family version of the analysis of \cite{CPZ} should allow one to equip $\omega^*( (\bG/\bB)_\P)\otg$ with an action of the BGG/Demazure-style operators compatibly with those on $\LL\otg\ps{\oh{\bT}}$, and to identify the rightmost maps of the equalizer diagram of Lemma~\ref{lem:equaliz} with the ``action'' maps for this.\footnote{The action of BGG/Demazure-style operators can be viewed as a ``deformation'' of the Weyl group action.  In the case of cobordism (anything but cohomology and $K$-theory), Bressler \& Evens showed that they even cease to satisfy the familiar \emph{braid relations}, so that $D_w$ depends on writing $w$ as a product of simple roots and not merely on $w$.} This identifies $\omega^*(Y)\otg \subset \omega^*( (\bG/\bB)_\P)\otg$ with the invariants for this action, and reduces us to checking that the invariants for this action in $\LL\otg\ps{\oh{\bT}}$ are $\LL\otg\ps{\oh{\bT}}^W$ using torsion-freeness.  A similar argument is possible by directly proving a ``flag bundle theorem.''
    
The following Proposition encodes the goal of such an approach. (It's well-known and less interesting rationally, and is probably well-known to some but not all in the form below.  We don't use the proposition, so include only a brief sketch of proof already assuming the existence of $c_\bG$.)
\end{na}

\begin{prop}\label{prop:flagbundle} Suppose $\P \to Y$ is a principal $\bG$-bundle, $Y \in \Sm_k$. (We do this primarily so that gradings look nice.)
  \begin{enumerate}
  \item There is a natural isomorphism \[ \LL\otg\ps{\oh{\bT}} \otimes_{\LL\otg\ps{\oh{\bT}}^W} \omega^*(Y)\otg \isom \omega^*( (\bG/\bB)_\P)\otg  \qquad \text{given by} \qquad A \otimes x \mapsto c_{\bT}(A, \P^\bT) \cap p^* x. \]
  \item There is an action of ``BGG/Demazure-type'' operators on $\omega^*((\bG/\bB)_\P)\otg$ and the invariants are precisely the image of $p^*$.
  \item The BGG/Demazure-invariants in $\LL\otg\ps{\oh{\bT}}$ are precisely $\LL\otg\ps{\oh{\bT}}^W$.
\end{enumerate}
\end{prop}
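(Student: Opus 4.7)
The plan is to bootstrap from the split equalizer of Lemma~\ref{lem:equaliz} together with the identification $\Omega^*(B\bG)\otg \isom \LL\otg\ps{\oh{\bT}}^W$ of Corollary~\ref{corr:charBG2}, carrying out a family version of the analysis of \cite{CPZ}. I would prove the three parts essentially in reverse order: first the purely algebraic claim (iii), then construct the geometric operators in (ii), and finally deduce (i) as a Leray--Hirsch-type consequence.

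For (iii), a simple root $\alpha$ determines a BGG/Demazure-style operator $D_\alpha(A) = (A - s_\alpha A)/x_\alpha$ on $\LL\otg\ps{\oh{\bT}}$, where $x_\alpha$ denotes the FGL-class of $\alpha$ (well-definedness of the division goes back to \cite{CPZ}). Since $\LL\otg\ps{\oh{\bT}}$ is torsion-free and $x_\alpha$ is a non-zero-divisor, the condition $D_\alpha A = 0$ is equivalent to $s_\alpha A = A$. Taking the common kernel over simple roots therefore yields $\LL\otg\ps{\oh{\bT}}^W$, since $W$ is generated by simple reflections.

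For (ii), for each simple root $\alpha$ let $\bP_\alpha \supset \bB$ be the associated minimal parabolic; this gives a $\PP^1$-bundle $\pi_\alpha: (\bG/\bB)_\P \to (\bG/\bP_\alpha)_\P$, and I set $D_\alpha = \pi_\alpha^*(\pi_\alpha)_*$. The projection formula, together with the standard computation of $(\pi_\alpha)_*$ on Chern classes of the relative tautological line bundle along the $\PP^1$-fiber, shows that under $c_\bT(-,\P^\bT)$ these operators are compatible with the algebraic $D_\alpha$ of (iii). The inclusion $\im p^* \subseteq \bigcap_\alpha \ker D_\alpha$ is immediate from $p = \pi_\alpha \circ q_\alpha$ combined with $(\pi_\alpha)_* \pi_\alpha^* = 0$ for positive relative dimension. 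For the reverse inclusion I would use the splitting $s$ of Lemma~\ref{lem:equaliz} to write $\omega_*((\bG/\bB)_\P)\otg$ as $p^* \omega_*(Y)\otg$ plus a complementary summand, then verify fiberwise, using a Schubert-type decomposition, that the complementary summand has trivial common $D_\alpha$-kernel.

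For (i), the map is well-defined because for $A \in \LL\otg\ps{\oh{\bT}}^W$ one has $c_\bT(A,\P^\bT) \cap p^* x = p^*(c_\bG(A,\P) \cap x)$ by Corollary~\ref{corr:charBG2}(ii) combined with \ref{na:cG-split}, so the Weyl-invariant factor absorbs down to $Y$ via $c_\bG$. Bijectivity then follows from a family flag-bundle theorem, establishing $\omega_*((\bG/\bB)_\P)\otg$ as a free module of rank $\Card{W}$ over $\omega_*(Y)\otg$ via fiberwise Schubert classes, and matching this (via $c_\bT$) with the fact that $\LL\otg\ps{\oh{\bT}}$ is free of rank $\Card{W}$ over $\LL\otg\ps{\oh{\bT}}^W$. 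The main obstacle is bookkeeping around completions: $\LL\otg\ps{\oh{\bT}}$ is complete in the augmentation ideal while $\omega_*((\bG/\bB)_\P)\otg$ typically is not, so the tensor product in (i) must be interpreted carefully, with surjectivity and injectivity verified at the level of associated gradeds via graded Nakayama (Lemma~\ref{lem:grNAK}). One must also be cautious, as the remark preceding the proposition flags, that braid relations for $D_\alpha$ may fail in cobordism, so the argument should invoke only simple-root operators rather than longer Demazure words.
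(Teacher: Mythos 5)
Your proposal shares the key ingredients with the paper's (Lemma~\ref{lem:equaliz}, Corollary~\ref{corr:charBG2}, \cite{CPZ}) but reverses the logical order, and the reversal leaves the hardest step unaddressed. In the paper, (i) is proved first and is the load-bearing part: Lemma~\ref{lem:equaliz} reduces to the case where $\P$ is Zariski-locally trivial (pulling back along $p$ produces the tautological $\bB$-reduction), and then an ascending induction on closed subsets of $Y$, as in the projective-bundle theorem of \cite{LM}, carries the trivial-bundle computation of \cite{CPZ} to the general Zariski-locally-trivial case. Your ``bijectivity then follows from a family flag-bundle theorem'' names exactly this conclusion without supplying the argument; and the ``verify fiberwise, using a Schubert-type decomposition'' step in your (ii) quietly depends on the same localization machinery, since for non-special $\bG$ there is no Zariski cover of $Y$ trivializing $\P$, so passing from a computation on a single fiber $\bG/\bB$ to a statement about the global module $\omega^*((\bG/\bB)_\P)\otg$ is not automatic. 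Once (i) is in hand, the paper's (ii) is more economical than your minimal-parabolic construction: it identifies the equalizer diagram of Lemma~\ref{lem:equaliz} with $\omega^*(Y)\otg \to R\otimes_{R^W}\omega^*(Y)\otg \rightrightarrows (R\otimes_{R^W}R)\otimes_{R^W}\omega^*(Y)\otg$, with $R = \LL\otg\ps{\oh{\bT}}$, and a Demazure basis of $R$ over $R^W$ turns the equalizer --- which is $\im p^*$ by Lemma~\ref{lem:equaliz} --- into the common $D_\alpha$-kernel on the nose, bypassing a direct reverse-inclusion argument. Your torsion-freeness proof of (iii) ($x_\alpha$ a non-zero-divisor in $\LL\otg\ps{\oh{\bT}}$ forces $D_\alpha A = 0 \Leftrightarrow s_\alpha A = A$, and $W$ is generated by simple reflections) is a clean and correct replacement for the paper's bare citation of Demazure, and your caveat about braid relations failing for $D_\alpha$ in cobordism is exactly the right instinct.
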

\begin{proof} \begin{enumerate} \item Assuming Corr.~\ref{corr:charBG2}, the indicated formula is well-defined.  Surjectivity is easy to prove using localization and the results of \cite{CPZ} in the case of a trivial bundle.  In light of Lemma~\ref{lem:equaliz}, we may prove injectivity after pulling back to $(\bG/\bB)_\P$ and in particular may assume $\P$ is Zariski-locally trivial.  Injectivity is not hard to prove in the case of a trivial bundle (the results of \cite{CPZ} imply that one can ``coefficient find'' using only Chern classes), and then the case where $\P$ is Zariski-locally trivial follows by an ascending induction on closed subsets of $Y$ (with the bundle trivial on the locally closed differences) as in the proof of the projective bundle Theorem in \cite{LM}.
  \item (Defining the action is entirely geometric and no harder in families; see \cite{BrEv1} or \cite{CPZ}.) Let $R = \LL\otg\ps{\oh{\bT}}$.  Using (i), we may identify the equalizer diagram (up to regrading) of Lemma~\ref{lem:equaliz} with
    \[  \xymatrix{\omega^*(Y)\otg\ar[r]^-{p^*} &  R \otimes_{R^W} \omega^*(Y)\otg \ar@<1ex>[r]^-{p_1^*} \ar@<-1ex>[r]_-{p_2^*} &  \left(R \otimes_{R^W}  \otimes R\right) \otimes_{R^W} \omega^*(Y)\otg  } \]
    where $p^*$, $p_1^*$, and $p_2^*$ are acting only on the first factors in the tensor products.  Taking the ``Demazure'' basis $A_1, \ldots, A_d$ for $ R$ over $R^W$, and letting $M$ be the middle term, we may regard $p_1^*$ and $p_2^*$ as maps $M \mapsto M^{\oplus d}$, the first one diagonally and the second one by the actions of $A_1, \ldots, A_d$ via $c_\bT(A_i, \P^\bT)$.
  \item This is essentially a result of Demazure, and holds more generally assuming only that $2$ is not a zero divisor in the coefficient ring.\qedhere
\end{enumerate}
\end{proof}

%%%%%%%%%%%%%%%%%%%%%%%%%%%%%%%%%%%%%%%%%%%%%%%%%

\section{Operations}\label{sec:operations} 

\begin{na}In the course of the proof, we will want to have available the following ``operations'' on double-point cobordism with principal bundles.

\begin{enumerate}\label{enum:operations}
  \item\label{it:op-ind} Suppose $\phi: \bH \to \bH'$ is a homomorphism of algebraic groups.  Define $\LL$-module homomorphisms, natural in $X$,
    \[ \ind_{\phi} = \ind_{\bH}^{\bH'}: \omega_{*,\bH}(X) \to \omega_{*,\bH'}(X) \qquad \text{by} \qquad \ind_{\phi}\left[\pi: Y \to X, \P\right] = \left[\pi: Y \to X, (\bH')_{\P}\right] \] (Recall that $(\bH')_\P$ is the principal $\bH'$-bundle associated to $\P$, $(\bH')_{\P} = (\P \times \bH')/\bH$.)  As a special case, define $\varepsilon = \ind_{\bH}^{\{\id\}}: \omega_{*,\bH}(X) \to \omega_{*}(X)$.
  \item\label{it:op-bullet} Suppose $\P$ is a principal $\bH$-bundle on $X$.  Define the $\LL$-module homomorphism
    \[ - \bullet \P: \omega_*(X) \to \omega_{*,\bH}(X) \qquad\text{by}\qquad \left[\pi: Y \to X\right] \bullet \P = \left[\pi: Y \to X, \pi^* \P\right]. \]
  \item\label{it:op-GB} Suppose $\bB \subset \bG$ is the inclusion of a Borel, $d = \dim \bG/\bB$.  Define $\LL$-module homomorphisms, natural in $X$,
    \[ (\bG/\bB)_{-}: \omega_{*,\bG}(X) \to \omega_{*+d,\bB}(X) \qquad \text{by} \qquad (\bG/\bB)_{\left[\pi: Y \to X, \P\right]} = \left[\pi \circ p: (\bG/\bB)_{\P} \to X, \P^\bB\right] \] (Recall $p: (\bG/\bB)_{\P} = \P/\bB \to Y$ is the generalized flag bundle of $\P$, and $\P^\bB$ is $\P$ viewed as a $\bB$ bundle over $\P/\bB$.)
  \item\label{it:op-cT} Suppose $A \in \LL\ps{\oh{\bT}}$, $|A|$ its degree.  Define $\LL$-module homomorphisms, natural in $X$,
    \[ \c_\bT(A) \cap -: \omega_{*,\bT}(X) \to \omega_{*-|A|,\bT}(X) \quad \text{by} \quad \c_\bT(A) \cap \left[\pi: Y \to X, \P\right] = \pi_*\left[ (c_\bT(A, \P) \cap [Y]) \bullet \P\right] \] Recall (from $\S$~\ref{sec:charBG}) that $c_{\bT}(-,\P): \LL\ps{\oh{\bT}} \to \End(\omega_*(Y))$ denotes the unique graded $\LL$-algebra homomorphism that takes a character $\lambda \in \oh{\bT}$ to $c_1(\L(\lambda))$.
    Fixing a surjection $\bB \twoheadrightarrow \bT$, we can completely analogously define \[ \c_{\bT}(A) \cap -: \omega_{*,\bB}(X) \to \omega_{*-|A|, \bB}(X) \quad \text{by} \quad \c_{\bT}(A) \cap \left[\pi: Y \to X, \P\right] = p_*\left[ (c_{\bT}(A, \P^\bT) \cap [Y]) \bullet \P\right] \] where $\P^\bT = (\bT)_{\P}$ is the $\bT$-bundle associated (via the chosen surjection) to $\P$.
  \item\label{it:op-cG} Suppose $A \in \LL\otg\ps{\oh{\bT}}^W$, $|A|$ its degree  Define $\LL$-module homomorphisms, natural in $X$,
    \[ \c_{\bG}(A) \cap -: \omega_{*,\bG}(X)\otg \to \omega_{*-|A|,\bG}(X)\otg \quad \text{by} \quad \c_{\bG}(A) \cap \left[\pi: Y \to X, \P\right] = p_*\left[ (c_{\bG}(A, \P) \cap [Y]) \bullet \P\right] \] Recall (from $\S$~\ref{sec:charBG}) that $c_{\bG}(-,\P): \LL\otg\ps{\oh{\bT}}^W \to \End(\omega_*(Y)\otg)$ are certain ``characteristic classes for $\bG$-bundles.''
  \item Combining $\c_{\bT}(A)$ (resp., $\c_{\bG}(A)$) with $\varepsilon$, define
    \[ \ip{A}{-}_{\bT}=\varepsilon\left(\c_{\bT}(A) \cap -\right):   \omega_{*,\bT}(X)\otg \to \omega_{*-|A|}(X)\otg \qquad \text{for $A \in \LL\oh{\ps{\bT}}$} \]
    \[ \ip{A}{-}_{\bG}=\varepsilon\left(\c_{\bG}(A) \cap -\right):   \omega_{*,\bG}(X)\otg \to \omega_{*-|A|}(X)\otg \qquad \text{for $A \in \LL\otg\oh{\ps{\bT}}^W$} \]
\end{enumerate}

Well-definedness of (i) and (ii) is essentially obvious, as is (vi) from (iv) and (v). The rest of this section will be devoted to verifying the well-definedness of (iii)-(v).
\end{na}

\subsection{Double-point cobordism with extra structure}\label{ssec:dpextra}
\begin{na}To keep things honest, we'll abstract out the properties needed for well-definedness.  For notational convenience, it will be useful to have a common name for variants of double-point cobordism with ``extra structure.''  So, we introduce the following bit of redundant notation.
\end{na}

\begin{defn} 
  Suppose $\F: \Sm_k^{\text{op}} \to \mathop{Sets}$ is a presheaf on the category of smooth varieties.  For $f: X' \to X \in \Sm_k^{\text{op}}$ and $\gamma \in \F(X)$, we will follow tradition and denote $\F(f)(\gamma) \in \F(X)$ by $f^* \gamma$, or if the map $f$ is clear from context simply by $\res{\gamma}{X'}$.

  For $X \in \Sch_k$, define the graded abelian groups
  \[ \Cycle_{*,\F}(X) \eqdef \ZZ \cdot \left\{\left[\pi: Y \to X, \gamma \in \F(Y)\right] : \begin{gathered} \text{$Y \in \Sm_k$, pure of dimension *}\\\text{$\pi$ projective}\\\text{$\gamma \in \F(Y)$}\end{gathered} \right\} \] \[ \omega_{*,\F}(X) \eqdef \frac{\Cycle_{*,\F}(X)}{\text{double point cobordism relations}}. \]
\end{defn}

\begin{na} Taking $\F$ to be isomorphism classes of $\bG$-bundles, we recover $\omega_{*,\bG}(X)$.  The previous construction has the same functorialities as we expect of $\omega_{*,\bG}(X)$: It is covariantly functorial for projective morphisms $f: X' \to X$, and covariantly functorial for natural transformations $\F' \to \F$.
\end{na}

\subsection{Characteristic operations} 
The construction of Chern operations $\c_\bT, \c_\bG$ will be examples of the situation handled by the following Lemma:  Elements of $\F(Y)$ give rise to operations, that are already known to be be well-defined on cobordism, $\omega_*(Y) \to \omega_{*}(Y)$, that we wish to promote to transformations $\omega_{*,\F}(-) \to \omega_{*,\F}(-)$.  
\begin{lemma}\label{lem:op-ch}
  \begin{enumerate}
    \item Suppose given operations
    \[ a_Y: \F(Y) \times \omega_*(Y) \to \omega_{*,\F'}(Y) \] for all $Y \in \Sm_k$, satisfying
    \begin{enumerate}
      \item (``Homomorphism'') $a_Y(\gamma,-): \omega_*(Y) \to \omega_{*,\F'}(Y)$ is a group homomorphism for all $\gamma \in \F(Y)$.
      \item (``Push-pull'') For all smooth $Y, Y'$, $\gamma \in \F(Y)$, and projective $f: Y' \to Y$ 
	\[ f_* \left(a_{Y'}\left(f^* \gamma,[Y']\right)\right) = a_Y\left(\gamma, f_* [Y']\right). \]
    \end{enumerate}
	
	Then, the assignment
	\[ \left(\pi: Y \to X, \gamma\right) \longmapsto \pi_*\left(a_Y\left( \gamma, [Y]\right) \right) \] 
	determines a well-defined homomorphism of abelian groups $a: \omega_{*,\F}(X) \to \omega_{*,\F'}(X)$.  Moreover, $a$ commutes with pushforwards and so determines a natural transformation of functors.
    \item 
Suppose $a_Y: \F(Y) \times \omega_*(Y) \to \omega_*(Y)$ satisfies the hypotheses of (i).  Then so does $a'_Y: \F(Y) \times \omega_*(Y) \to \omega_{*,\F}(Y)$ given by
      \[ a'_Y(\gamma, x) = a_Y(\gamma, x) \bullet \gamma. \]
  \end{enumerate}
\end{lemma}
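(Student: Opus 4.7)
The plan is to verify (i) by recognizing that the defining double-point cobordism relations in $\omega_{*,\F}(X)$ are obtained from cobordism relations in $\omega_*(\Y)$ by applying $a_\Y(\gamma,-)$ and pushing forward to $X$, so that hypotheses (a) and (b) are exactly what is needed. Part (ii) will be a direct computation.

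For (i), fix a datum defining a double-point cobordism relation: a double-point degeneration $(\pi_X, t): \Y \to X \times \PP^1$ (with $\Y_1 = A \cup_D B$) together with $\gamma \in \F(\Y)$. Write $\iota_0, \iota_A, \iota_B, \iota_D$ for the inclusions into $\Y$ and $q: \PP_D(\N) \to D$ for the projection. The map $(\id, t): \Y \to \Y \times \PP^1$ is a closed immersion (hence projective) and exhibits $t$ as a double-point degeneration over the target $\Y$ itself, so the definition of $\omega_*(\Y)$ gives the identity
\[ (\iota_0)_*[\Y_0] \;=\; (\iota_A)_*[A] + (\iota_B)_*[B] - (\iota_D \circ q)_*[\PP_D(\N)] \qquad \text{in } \omega_*(\Y). \]
Applying the homomorphism $a_\Y(\gamma, -): \omega_*(\Y) \to \omega_{*,\F'}(\Y)$ from hypothesis (a), and then pushing forward via $(\pi_X)_*$, yields an identity in $\omega_{*,\F'}(X)$. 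Hypothesis (b), applied to each of the maps $\iota_0$, $\iota_A$, $\iota_B$, and $\iota_D \circ q$, identifies the four terms of this identity with the images under the cycle-level formula of the four summands of the target double-point cobordism relation in $\omega_{*,\F}(X)$. Hence the assignment descends to a well-defined homomorphism $a: \omega_{*,\F}(X) \to \omega_{*,\F'}(X)$. Its compatibility with projective pushforwards in $X$ is immediate from the defining formula and the functoriality of projective pushforward.

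For (ii), the homomorphism property of $a'_Y(\gamma, -) = (- \bullet \gamma) \circ a_Y(\gamma, -)$ is immediate, since $- \bullet \gamma: \omega_*(Y) \to \omega_{*,\F}(Y)$ is a homomorphism by its definition (item~\ref{it:op-bullet} of the list in Section~\ref{sec:operations}). For the push-pull property, unwinding definitions gives the elementary identity $f_*(x \bullet f^*\gamma) = (f_*x) \bullet \gamma$ for projective $f: Y' \to Y$ and $x \in \omega_*(Y')$; combined with push-pull for $a$, this yields push-pull for $a'$.

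There is no substantive obstacle. The one conceptual point worth highlighting is the use of $\Y$ itself as its own base: specifying the double-point degeneration via the identity on $\Y$ produces a tautological cobordism relation in $\omega_*(\Y)$ which, by the two hypotheses, can be transported to the desired relation in $\omega_{*,\F'}(X)$ — a strategy that reduces the entire well-definedness check to one identity on $\Y$.
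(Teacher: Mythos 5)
Your proof is correct and takes essentially the same approach as the paper: rewrite each of the four terms via push-pull as $\pi_* a_{\Y}(\gamma, (\iota_T)_*[T])$, collect using the homomorphism property of $a_\Y(\gamma,-)$ and $\pi_*$, and observe the resulting argument is an alternating sum vanishing in $\omega_*(\Y)$. The one place you are a bit more explicit than the paper — spelling out that the vanishing is witnessed by the tautological double-point degeneration $(\id_\Y, t): \Y \to \Y\times\PP^1$, where $\Y$ serves as its own base — is a nice clarification, and part (ii) matches the paper's reduction to $f_*(x \bullet f^*\gamma) = (f_*x)\bullet\gamma$.
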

\begin{proof} \begin{enumerate} \item Suppose $(\pi, t): \Y \to X \times \PP^1$, $\gamma \in \F(\Y)$, is a double-point relation in $\omega_{*,\F}(X)$.  Let $\Y_0$ be a smooth fiber of $t$, and $\Y_1 = A \cup_D B$ a ``double-point'' fiber of $T$.  Let $i_0, i_A, i_B, i_{\PP_D}$ the natural maps from $\Y_0, A, B, \PP_D(\N)$ to $\Y$; let $\pi_0, \pi_A, \pi_B, \pi_{\PP_D}$ be the natural maps to $X$.  We must show that 
%  \begin{align*}
%  (\pi_0)_*\left(a_{\Y_0}\left(i_0^* \gamma, [\Y_0]\right) \bullet i_0^* \gamma\right)
%  &- (\pi_A)_*\left(a_{A}\left(i_A^* \gamma, [A]\right) \bullet i_A^* \gamma\right)
%  - (\pi_B)_*\left(a_{B}\left(i_B^* \gamma, [B]\right) \bullet i_B^* \gamma\right)\\
%  &+ (\pi_{\PP_D})_*\left(a_{\PP_D(\N)}\left(i_{\PP_D}^* \gamma, [\PP_D(\N)]\right) \bullet i_{\PP_D}^* \gamma\right) = 0 \in \omega_{*,\F}(X)
%\end{align*} 
  \begin{align*}
    (\pi_0)_* a_{\Y_0}\left(\res{\gamma}{\Y_0}, [\Y_0]\right)
  &- (\pi_A)_* a_{A}\left(\res{\gamma}{A}, [A]\right)
  - (\pi_B)_* a_{B}\left(\res{\gamma}{B}, [B]\right) \\
  &+ (\pi_{\PP_D})_* a_{\PP_D(\N)}\left(\res{\gamma}{\PP_D}, [\PP_D(\N)]\right) = 0 \in \omega_{*,\F'}(X)
\end{align*} 
But for each $T = 0,A,B,\PP_D$,
\[ {\pi_{T}}_* a_{T}\left(i_T^* \gamma, [T]\right)
= \pi_* {i_T}_* a_{T}\left(i_T^* \gamma, [T]\right) 
= \pi_* a_{\Y}\left(\gamma, {i_T}_*[T]\right) \]
%\begin{align*} (\pi_{T})_*\left[a_{T}\left(i_T^* \gamma, [T]\right) \bullet i_T^* \gamma\right] 
%&= \pi_*\left[ (i_T)_* \left(a_{T}\left(i_T^* \gamma, [T]\right) \bullet i_T^* \gamma\right)\right]  \\
%&= \pi_*\left[ (i_T)_* \left(a_{T}\left(i_T^* \gamma, [T]\right)\right) \bullet \gamma \right]
%= \pi_* a_{\Y}\left(\gamma, (i_T)_*[T]\right) \end{align*} 
by functoriality of pushforward and the push-pull relation.  Since $\pi_*$ and $a_{\Y}(\gamma, -)$ are both homomorphisms, we may rewrite the expression we are to prove is zero as
\[ \pi_* a_{\Y}(\gamma, \underbrace{[\Y_0 \to \Y] - [A \to \Y] - [B \to \Y] + [\PP_D(\N) \to \Y]}_{=0 \in \omega_{*}(\Y)}).\]
\item It suffices to check that if $Y',Y \in \Sm_k$, $\gamma \in \F(Y)$, $x \in \omega_{*,\F}(Y')$ and $f: Y' \to Y$ a projective morphism, then
     \[ f_*\left( x \bullet f^* \gamma \right) = f_*(x) \bullet \gamma \in \omega_{*,\F}(X).\qedhere\]
  \end{enumerate}
\end{proof}

\begin{corollary}\label{cor:opsBG} The operations (iv) and (v) (i.e., $\c_\bT$ and $\c_\bG$) are well-defined.
\end{corollary}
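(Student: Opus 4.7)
The plan is to reduce all three operations to a uniform application of Lemma~\ref{lem:op-ch}. Fix a homogeneous $A$ and let $\F$ denote the presheaf of isomorphism classes of principal bundles for the relevant group ($\bT$, $\bB$, or $\bG$). For $Y \in \Sm_k$ I would set
\[ a_Y(\P, x) = c_\bT(A, \P) \cap x \qquad \text{(resp.,\ } c_\bG(A, \P) \cap x \text{ for } \c_\bG\text{).} \]
The ``homomorphism'' hypothesis of Lemma~\ref{lem:op-ch}(i) is automatic, since $c_\bT(A, \P)$ was introduced precisely as an $\LL$-linear endomorphism of $\omega_*(Y)$, and similarly for $c_\bG(A, \P)$ after inverting $\tau_\bG$.

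The substantive input is the ``push-pull'' hypothesis: for a projective morphism $f: Y' \to Y$ with $Y, Y' \in \Sm_k$, I need
\[ f_*\bigl( c_\bT(A, f^*\P) \cap [Y'] \bigr) \;=\; c_\bT(A, \P) \cap f_*[Y']. \]
For $\c_\bG$ this is literally property (i) of Defn.~\ref{defn:char-class}, satisfied by $c_\bG$ thanks to Lemma~\ref{lem:cBG-Q} (rationally) or Corr.~\ref{corr:charBG2} (over $\LL\otg$). For $\c_\bT$, it reduces, using the fact that $c_\bT(-, \P)$ is the continuous $\LL$-algebra homomorphism $\LL\ps{\oh{\bT}} \to \End_\LL(\omega_*(Y))$ sending $\lambda \mapsto c_1(\L(\lambda))$, to the projection formula for $c_1$ of pullbacks of line bundles together with the compatibility $\L_{f^*\P}(\lambda) = f^*\L_\P(\lambda)$; the former is a standard axiom of oriented Borel--Moore theories (Prop.~\ref{prop:lm}(i)). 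The $\bB$-variant of $\c_\bT$ follows by functoriality of the associated $\bT$-bundle along $\bB \twoheadrightarrow \bT$.

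Once the hypotheses of Lemma~\ref{lem:op-ch}(i) are verified for $a_Y$, part~(ii) of the same lemma upgrades $a$ to $a'_Y(\P, x) = a_Y(\P, x) \bullet \P \in \omega_{*,\F}(Y)$, still satisfying the hypotheses of~(i). A final application of part~(i) to $a'$ produces the desired well-defined natural transformation $\omega_{*,\F}(X) \to \omega_{*-|A|, \F}(X)$, and on a generator $[\pi: Y \to X, \P]$ this evaluates, by construction, to $\pi_*\bigl( (c_\bT(A, \P) \cap [Y]) \bullet \P \bigr)$, matching the formulas defining $\c_\bT(A) \cap -$ and $\c_\bG(A) \cap -$.

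The only point where I would expect to pause is a minor bookkeeping issue: $A$ is allowed to be a genuine power series in the characters rather than a polynomial, so one must check that the identities above make sense termwise. This is routine because, acting on any fixed-dimensional class, only finitely many monomials of $A$ contribute for degree reasons, reducing every identity to its polynomial analog where the projection formula for $c_1$ applies directly. Thus the corollary follows with no new geometric input beyond Lemma~\ref{lem:op-ch} and the characteristic-class machinery already in place.
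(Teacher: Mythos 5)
Your proposal matches the paper's argument almost exactly: both reduce to Lemma~\ref{lem:op-ch} by first checking that the bare Chern operations $a_Y(\P,x) = c_\bT(A,\P)\cap x$ (resp.\ $c_\bG(A,\P)\cap x$) satisfy the push-pull hypothesis via Defn.~\ref{defn:char-class}(i) (Corr.~\ref{corr:charBG2}), then apply part~(ii) of the lemma to put the bundle back in, and then part~(i) to conclude well-definedness. Your extra remarks---spelling out the projection-formula reduction for $c_\bT$ separately and noting the local nilpotence needed to interpret power series in the characters---are correct fillings-in of details the paper leaves implicit; the only item the paper adds that you elide is the sentence observing that $\LL$-linearity follows from compatibility with exterior products (\ref{na:char-prod}).
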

\begin{proof} Part (i) of Defn.~\ref{defn:char-class} (Corr.~\ref{corr:charBG2}) implies that Chern operations (without putting back in the bundle) satisfy the conditions part (i) of the Lemma.  Part (ii) of the Lemma then implies that the operations $\c_\bT$ and $\c_\bG$ do too.  Furthermore, $\LL$-linearity follow from compatability with exterior products (\ref{na:char-prod}).
\end{proof}

\subsection{Flag bundle}
The construction of $(\bG/\bB)_{-}$ will be an example of the situation handled by the following Lemma: The construction can be lifted to the level of cobordism cycles in a way that is compatible with base change and preserves double-point cobordisms, thereby ``encoding'' its own compatibility with double-point relations.
\begin{lemma}\label{lem:op-flag}
  Suppose given an assignment
	\[ a_Y: \F(Y) \to \Cycle_{*+d,\F'}(Y) \] 
	\[ \gamma \longmapsto a_Y(\gamma) = \left(\pi_Y(\gamma): W_Y(\gamma) \to Y, \delta_Y(\gamma)\right) \qquad \text{where $W_Y(\gamma) \in \Sm_k$, $\delta_Y(\gamma) \in \F'(W_Y(\gamma))$}\]
	for all $Y \in \Sm_k$, satisfying
	\begin{enumerate} 
      \item (``Submersion'') $\pi_Y(\gamma): W_Y(\gamma) \to Y$ is smooth, surjective, of relative dimension $d$ for all $Y \in \Sm_k$.
	  \item (``Base-change'')
	    For all $Y',Y \in \Sm_k$, $f: Y' \to Y$, $\gamma \in \F(Y)$, and $\gamma' = f^* \gamma$, there is a morphism $r_{f}: W_{Y'}(\gamma') \to W_Y(\gamma)$ so that $\sq{f}^* \delta_Y(\gamma) = \delta_{Y'}(\gamma')$ and so that the diagram
	\[ \xymatrix{
	W_{Y'}(\gamma') \ar[d]_{\pi_{Y'}(\gamma')} \ar[r]^{\sq{f}} & W_Y(\gamma) \ar[d]^{\pi_Y(\gamma)} \\
	Y' \ar[r]_f & Y } \]  is Cartesian.
    \end{enumerate}
Then, the assignment
	\[ \left(\pi: Y \to X, \gamma\right) \longmapsto \pi_*\left(a_Y\left( \gamma\right) \right) \] 
	determines a well-defined homomorphism of abelian groups $a: \omega_{*,\F}(X) \to \omega_{*+d,\F'}(X)$.  Moreover, $a$ commutes with pushforwards and so determines a natural transformation of functors.
\end{lemma}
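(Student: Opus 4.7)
Naturality in $X$ is essentially automatic: for projective $f:X'\to X$, both $a_X \circ f_*$ and $f_* \circ a_{X'}$ send a generator $(\pi:Y\to X',\gamma)$ to $(f\circ\pi\circ\pi_Y(\gamma):W_Y(\gamma)\to X,\,\delta_Y(\gamma))$. The real content is well-definedness modulo double-point relations, which I will prove by \emph{lifting} each such relation on $\Y$ to one on $W_\Y(\gamma)$, thereby encoding its image under $a$ as an actual double-point cobordism.

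Fix a double-point degeneration $(\pi,t):\Y\to X\times\PP^1$ together with $\gamma\in\F(\Y)$, and consider
\[ (\pi\circ\pi_\Y(\gamma),\,t\circ\pi_\Y(\gamma)):W_\Y(\gamma)\to X\times\PP^1, \qquad \delta_\Y(\gamma)\in\F'(W_\Y(\gamma)). \]
My first task is to verify that this is itself a double-point degeneration. Smoothness of $W_\Y(\gamma)$ and flatness of $t\circ\pi_\Y(\gamma)$ are immediate from hypothesis (i) combined with smoothness of $\Y$ and flatness of $t$; projectivity of $\pi\circ\pi_\Y(\gamma)$ follows because $\pi_\Y(\gamma)$ is projective (implicit in $a_\Y(\gamma)\in\Cycle_{*+d,\F'}(\Y)$).

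The main work is the bad-fiber analysis. Over a regular value $\zeta\in\PP^1$ of $t$, hypothesis (ii) identifies $W_\Y(\gamma)\times_\Y\Y_\zeta \cong W_{\Y_\zeta}(\gamma|_{\Y_\zeta})$, which is smooth by (i). Over the bad fiber $\Y_1 = A\cup_D B$, the same base-change yields $W_A(\gamma|_A)$, $W_B(\gamma|_B)$, $W_D(\gamma|_D)$ as the preimages of $A,B,D$; since $\pi_\Y(\gamma)$ is smooth, these are smooth divisors meeting transversely along $W_D(\gamma|_D)$, with $\N_{W_A/W_D} = \pi_\Y(\gamma)^*\N_{A/D}$, whence $\PP_{W_D}(\N_{W_A/W_D}) \cong W_\Y(\gamma)\times_\Y\PP_D(\N) \cong W_{\PP_D(\N)}(\gamma|_{\PP_D})$ by a further application of (ii). Using the $\delta$-part of (ii) piece by piece to identify $\delta_\Y(\gamma)$ restricted to each piece with $\delta$ applied to the corresponding restriction of $\gamma$, the double-point relation produced by this lifted degeneration reads
\[ [W_{\Y_0},\delta_{\Y_0}(\gamma|_{\Y_0})] - [W_A,\delta_A(\gamma|_A)] - [W_B,\delta_B(\gamma|_B)] + [W_{\PP_D(\N)},\delta_{\PP_D(\N)}(\gamma|_{\PP_D})] = 0, \]
which is exactly $a$ applied to the original relation.

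I expect the only delicate point to be the bad-fiber identification---checking transversality of the pulled-back divisors and the normal-bundle formula---but both are standard consequences of smoothness of $\pi_\Y(\gamma)$ and the Cartesian base-change of (ii). The rest is bookkeeping, parallel to part (i) of Lemma~\ref{lem:op-ch}.
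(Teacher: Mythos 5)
Your proposal is correct and follows essentially the same route as the paper: lift the double-point degeneration $(\pi,t)$ to $W_\Y(\gamma)$, use the Cartesian base-change hypothesis to identify the preimages of $\Y_0$, $A$, $B$, $D$ with the corresponding $W$'s, and use smooth-locality of normal bundles to identify $\PP_{W_D}(\N_{W_A/W_D})$ with $W_{\PP_D(\N)}(\gamma|_{\PP_D})$. The only cosmetic difference is that the paper packages the first step as ``being a double-point degeneration can be checked on a smooth surjective cover,'' while you verify the conditions directly, but these are the same argument.
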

\begin{proof} 
  With notation as in the proof of the previous Lemma, it suffices to show that
\begin{align*}
    (i_0)_* a_{\Y_0}\left(\res{\gamma}{\Y_0}\right)
  &- (i_A)_* a_{A}\left(\res{\gamma}{A}\right)
  - (i_B)_* a_{B}\left(\res{\gamma}{B}\right) \\
  &+ (i_{\PP_D})_* a_{\PP_D(\N)}\left(\res{\gamma}{\PP_D}\right) = 0 \in \omega_{*,\F'}(\Y)
\end{align*}  

The property of being a double-point degeneration can (like smoothness) be checked on a smooth surjective cover, like $\pi'=\pi_\Y(\gamma)$.  Consequently, $\Y' = W_{\Y}(\gamma) \to \Y \times \PP^1$ is a double-point degeneration, and together with $\delta' = \delta_{\Y}(\gamma) \in \F'(\Y')$ it determines a relation on $\Cycle_{*,\F'}(\Y)$.  The claim is that this is precisely the relation we want.

By the base-change property, $\Y'_0 = \pi'^{-1}(\Y_0) = W_{\Y_0}(\res{\gamma}{\Y_0}$ and $\delta_{\Y_0}(\res{\gamma}{\Y_0}) = \res{\delta'}{\Y'_0}$. 
Similarly, $\Y'_1 = \pi'^{-1}(\Y_1) = \pi'^{-1}(A \cup_D B)$ is the union of smooth components $A' = W_{A}(\res{\gamma}{A})$ and $B' = W_B(\res{\gamma}{B})$ intersecting transversely along the smooth divisor $D' = W_D(\res{\gamma}{D})$ with all the $\delta$ pulled back from $\Y'$.

It remains to compare $\PP_{D'} = \PP_{D'}(\O_{D'} \oplus \N_{A'/D'})$ with $W_{\PP_D}$, as $D$-schemes.  By base-change, $W_{\PP_D} = D' \times_{D} \PP_D$. Thus it suffices to verify that
\[ \xymatrix{ \PP_{D'} = \PP_{D'}(\O_{D'} \oplus \N_{A'/D'})\ar[r] \ar[d] & D' \ar[d] \ar[r] & \Y' \ar[d] \\
\PP_{D} = \PP_D(\O_D \oplus \N_{A/D}) \ar[r]& D \ar[r] & \Y } \] is Cartesian.  This is true since formation of normal bundles is smooth local, i.e., $\N_{A'/D'}$ is the pullback of $\N_{A/D}$ to $D'$.
\end{proof}

\begin{corollary}\label{cor:opsFlag} The operation (iii) (i.e., $(\bG/\bB)_{-}$) is well-defined.
\end{corollary}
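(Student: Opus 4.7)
The plan is to realize the operation $(\bG/\bB)_{-}$ as an instance of Lemma~\ref{lem:op-flag}. I take $\F$ to be the presheaf on $\Sm_k$ of isomorphism classes of principal $\bG$-bundles and $\F'$ the analogous presheaf for $\bB$-bundles, so that $\omega_{*,\F}(X) = \omega_{*,\bG}(X)$ and $\omega_{*,\F'}(X) = \omega_{*,\bB}(X)$. With $d = \dim(\bG/\bB)$, I define
\[ a_Y(\P) = \bigl(p_\P: (\bG/\bB)_\P \to Y,\; \P^\bB\bigr) \in \Cycle_{*+d,\F'}(Y). \]
A direct unwinding shows that the homomorphism produced by the lemma from this data is precisely the assignment $[\pi, \P] \mapsto [\pi \circ p_\P, \P^\bB]$, so it suffices to verify the two hypotheses (``Submersion'' and ``Base-change'').

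For ``Submersion,'' I will use that every principal $\bG$-bundle $\P$ is \'etale-locally trivial, so on any \'etale cover $U \to Y$ trivializing $\P$ one has $U \times_Y (\bG/\bB)_\P \cong U \times (\bG/\bB)$. Since $\bG/\bB$ is a smooth projective $k$-variety of dimension $d$, the map $p_\P$ is thus smooth and surjective of relative dimension $d$, and $(\bG/\bB)_\P \in \Sm_k$ whenever $Y \in \Sm_k$ (with quasi-projectivity guaranteed by a $\bG$-equivariant ample line bundle on $\bG/\bB$, as noted in the paper's notation).

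For ``Base-change,'' given $f: Y' \to Y$ and $\P' = f^* \P$, the formula $(\bG/\bB)_\P = (\P \times \bG/\bB)/\bG$ combined with $\P' = Y' \times_Y \P$ yields a canonical isomorphism $(\bG/\bB)_{\P'} \cong Y' \times_Y (\bG/\bB)_\P$ compatible with the structure maps, which supplies $\sq{f}$ and makes the required square Cartesian. The identification $\sq{f}^*(\P^\bB) = (\P')^\bB$ will then hold tautologically, since both sides are the same pullback of $\P$, viewed as a $\bB$-bundle over its flag variety, along the evident map into $(\bG/\bB)_\P$. I do not anticipate a serious obstacle here; the whole content is the routine verification that associated-bundle and $\bB$-reduction constructions commute with pullback, and this diagram chase is where the bookkeeping concentrates.
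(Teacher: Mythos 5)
Your proof is correct and follows the same route as the paper: it applies Lemma~\ref{lem:op-flag} with $a_Y(\P) = \bigl(p_\P: (\bG/\bB)_\P \to Y,\ \P^\bB\bigr)$, verifying the ``Submersion'' hypothesis via \'etale-local triviality of $\P$ and the ``Base-change'' hypothesis via the compatibility of the associated-bundle and $\bB$-reduction constructions with pullback. The only thing you leave implicit that the paper flags explicitly is $\LL$-linearity (Lemma~\ref{lem:op-flag} only yields a homomorphism of abelian groups), but this is immediate since $(\bG/\bB)_{-}$ visibly commutes with external product by cobordism cycles.
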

\begin{proof} It satisfies the hypothesis of the previous Lemma, with the cycle representative being simply $[\pi \circ p: (\bG/\bB)_{\P} \to Y, \P^\B]$.  Furthermore, $\LL$-linearity is geometrically clear.
\end{proof}

%%%%%%%%%%%%%%%%%%%%%%%%%%%%%%%%%%%%%%%%%%%%%%%%%

\section{Tori and Borels}\label{sec:bTbB}
\subsection{$\omega_{*,\bT}$ vs $\omega_{*,\bB}$} 
In topology, there is a homotopy equivalence $B\bB \sim B\bT$ since $\bB$ and $\bT$ differ by the (contractible) $\bN$. This motivates the following Lemma.

\begin{lemma}\label{lem:bTbB} Suppose $\bT \subset \bB$ is the inclusion of a maximal torus into a Borel.  Then, $\ind_{\bT}^{\bB}: \omega_{*,\bT}(X) \to \omega_{*,\bB}(X)$ is an isomorphism of $\LL$-modules, with inverse induced from a quotient map $\bB \to \bB/\bN = \bT$.
\end{lemma}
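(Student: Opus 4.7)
The plan is to verify that $q: \omega_{*,\bB}(X) \to \omega_{*,\bT}(X)$, defined by $q[\pi, \P^\bB] := [\pi, \P^\bB/\bN]$, is a two-sided inverse to $\ind_\bT^\bB$. Note that $q$ is an instance of operation (i) of $\S$\ref{sec:operations} applied to the quotient $\bB \twoheadrightarrow \bT$ and is therefore automatically well-defined on cobordism. The composition $q \circ \ind_\bT^\bB$ is the identity already at the level of cycles, since $(\P^\bT \times^\bT \bB)/\bN \isom \P^\bT$ canonically. The substantive content is the reverse composition: for every $\bB$-bundle $\P^\bB$ on a smooth projective $Y$, one must show
\[ [Y, \P^\bB] \;=\; [Y, (\P^\bB/\bN) \times^\bT \bB] \quad \text{in}\ \omega_{*,\bB}(X). \]

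My plan is to exhibit this equality via a smooth cobordism on $Y \times \PP^1$, reducing by induction on $\dim \bN$ to the case $\bN \isom \GG_a$. In characteristic zero the exponential gives a $\bT$-equivariant isomorphism $\bN \isom \operatorname{Lie}(\bN)$, and reductivity of $\bT$ decomposes $\operatorname{Lie}(\bN)$ into one-dimensional $\bT$-weight spaces; choosing a central filtration $\bN = \bN_0 \supset \bN_1 \supset \cdots \supset \bN_k = \{e\}$ subordinate to this decomposition lets one trivialize the $(\bN_i/\bN_{i+1})$-torsor data inside $\P^\bB$ one $\GG_a$-layer at a time by applying the base case.

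In the base case, $\bB = \bT \ltimes \GG_a$ with $\bT$-action via a nonzero character $\alpha \in \oh{\bT}$, a $\bB$-bundle is a pair $(\P^\bT, \E)$ with $\E$ a torsor for $\L_\alpha := \P^\bT \times^\bT \AA^1_\alpha$, classified by $\xi \in H^1(Y, \L_\alpha)$. Writing $\alpha = (a_i)$ in a basis $\lambda_1, \ldots, \lambda_r$ of $\oh{\bT}$, set $k := |\gcd(a_i)| > 0$ and pick $(m_i) \in \ZZ^r$ with $\sum a_i m_i = k$ (by Bezout). On $\Y = Y \times \PP^1$, with projections $\pi, \rho$, define $\tilde{\P}^\bT$ to be the $\bT$-bundle whose $\lambda_i$-associated line bundle is $\pi^* \L_{\lambda_i} \otimes \rho^* \O_{\PP^1}(m_i)$, so that its $\alpha$-line bundle is $\pi^* \L_\alpha \otimes \rho^* \O_{\PP^1}(k)$. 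Let $\tilde{\E}$ be the torsor classified by $\xi \otimes s^k \in H^1(Y, \L_\alpha) \otimes H^0(\PP^1, \O(k)) \subseteq H^1(\Y, \pi^* \L_\alpha \otimes \rho^* \O(k))$, where $s \in H^0(\PP^1, \O(1))$ has a simple zero at $t = 0$ and satisfies $s(1) = 1$. The $\bB$-bundle $(\tilde{\P}^\bT, \tilde{\E})$ on $\Y$ restricts at $t = 0$ to $\P^\bT \times^\bT \bB$ (the torsor trivializes since $s^k$ vanishes there) and at $t = 1$ to $\P^\bB$; since $\Y \to \PP^1$ is smooth, this is a trivial double-point degeneration witnessing the desired identity.

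\textbf{The hard part} will be the base-case construction: realizing $\rho^* \O_{\PP^1}(k)$ as the $\alpha$-associated line bundle of a $\bT$-bundle on $\PP^1$ whose fibers at $t = 0, 1$ are trivializable. The constraint $k \in \gcd(a_i) \ZZ$ is exactly what makes this possible, and uses nontriviality of $\alpha$ in an essential way (as is available to us because $\bN$ is the unipotent radical of a Borel, so only nonzero roots occur). A secondary concern in the inductive step is that the central extensions $\bN_{i+1} \to \bB/\bN_{i+1} \to \bB/\bN_i$ need not split globally as algebraic groups (think Heisenberg-type $\bN$), so the inductive argument must apply the base-case cobordism directly to the $(\bN_i/\bN_{i+1})$-torsor part of $\P^\bB$ rather than via a clean reduction to an ``intermediate $\bB/\bN_i$-bundle'' lifted back to $\bB$.
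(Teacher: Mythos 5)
Your reduction is correct, and the base case ($\bN \isom \GG_a$) does work: twisting the $\bT$-part by $\rho^*\O_{\PP^1}(m_i)$ and scaling the extension class by $s^k$ produces a $\bB$-bundle on $Y \times \PP^1$ that is smooth over $\PP^1$, hence a trivial double-point degeneration, interpolating between $\P^\bB$ (at $t=1$) and $\P^\bT \times^\bT \bB$ (at $t=0$). This is essentially the same cobordism the paper uses, specialized to $\dim\bN=1$, just encoded via line bundles and K\"unneth rather than via conjugation.

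The gap is the inductive step, and you flag the problem yourself without resolving it. Your base-case construction is built on the identification of $\bB$-torsors with pairs (a $\bT$-torsor, a class in $H^1(Y,\L_\alpha)$), which uses that $\bN$ is abelian and that $\bT\subset\bB$ provides a canonical ``zero'' lift. At an intermediate step of a central filtration this fails twice over: the lifts of a fixed $\bB/\bN_{i+1}$-torsor along $1\to\bN_i/\bN_{i+1}\to\bB/\bN_{i+1}\to\bB/\bN_i\to1$ form a \emph{torsor} under $H^1$ with no preferred basepoint (the sequence has no group-theoretic section when $\bN$ is, e.g., Heisenberg for $\mathbf{SL_3}$), and the twist-by-$\O(m_i)$ trick cannot be applied because twisting a non-abelian $\bB/\bN_i$-torsor by a $\bT$-torsor on $\PP^1$ is not an operation on torsors (the product of cocycles valued in $\bT$ and in $\bB/\bN_i$ is not a cocycle). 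So ``apply the base-case cobordism directly to the $(\bN_i/\bN_{i+1})$-torsor part'' is not well-defined as stated, and the induction does not close.

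The paper avoids the induction entirely: pick a strictly dominant coweight $\lambda:\GG_m\to\bT$ and set $f_t(b)=\lambda(t)b\lambda(t)^{-1}$. Each $f_t$ is a group homomorphism, so applying $f_t$ to a cocycle of $\P^\bB$ produces a $\bB$-torsor over $Y\times\AA^1$; strict dominance gives $\langle\lambda,\alpha\rangle>0$ for every positive root, so $f_t\to i\circ p$ as $t\to0$, while $f_t$ is inner for $t\ne0$, permitting a gluing over $Y\times\PP^1$. This handles all of $\bN$ uniformly. Note that your Bezout choice of $(m_i)$ with $\sum a_im_i>0$ is implicitly choosing a coweight $\mu$ pairing positively with $\alpha$; the paper's strictly dominant $\lambda$ is the simultaneous version of this for every positive root, and the fact that conjugation is a homomorphism is exactly what makes the single family respect the non-abelian group structure of $\bN$.
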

\begin{proof} 
   Fix a retraction
    \[  \bT \stackrel{i}{\longrightarrow} \bB \stackrel{p}{\longrightarrow} \bB/\bB_u \isom \bT \] which induces
    \[ \omega_{*,\bT}(X) \stackrel{\ind_i}{\longrightarrow} \omega_{*,\bB}(X) \stackrel{\ind_p}{\longrightarrow} \omega_{*,\bT}(X) \] 
    so that $\ind_i$ is injective. (Think ``direct sum of a sequence of line bundles'' and ``associated graded of a filtered vector bundle''.)  
    
    To remains to show that $\ind_i \ind_p = \id$; more concretely, if $\P \to Y$ is a $\bB$-torsor, we wish to give a cobordism (of $\bB$-torsors) between $\P \to Y$ and $\ind_i \ind_p \P \to Y$ (i.e., to ``take the extension parameter to zero'').  For this, it suffices to construct a family of group endomorphisms $f_t: \bB \to \bB$, $t \in \AA^1$, such that $f_1 = \id$, $f_0 = i \circ p$ is the retraction $\bB \to \bT$, and $f_t$ is an inner automorphism for $t \neq 0$.  By e.g., applying $f_t$ to cocycle representatives, this gives a $\bB$-torsor $\P' \to Y \times \AA^1$ whose restriction to $Y \times \{0\}$ is isomorphic to $\ind_i \ind_p \P$ and whose restriction to $Y \times \GG_m$ is isomorphic to $p_1^* \P$.  The second property allows us to extend $\P'$ to a $\bB$-torsor over all of $Y \times \PP^1$ by gluing on $\res{p_1^* \P}{Y \times \PP^1-\{0\}}$ along $Y \times \GG_m$.%\footnote{Here's a global geometric description of this construction: <<TODO!>>}

%<<TODO -- Using Cech cocycles, really?  How do I describe the above geometrically\ldots>>
    
Finally, there is a standard construction of $f_t$ by using the conjugation action of $\bT$: Take $\lambda: \GG_m \to \bT$ to be a strictly dominant coweight, and set $f_t(b) = \lambda(t) b \lambda(t)^{-1}$.  By the following standard Lemma, $f_t \to i \circ p$ as $t \to 0$. \qedhere 
\end{proof}

\begin{lemma} Suppose $\bT \subset \bG$ is a maximal torus, $\bB$ a Borel, and $\lambda: \GG_m \to \bT$ a strictly dominant coweight.  The induced action map  \[ a_{\lambda}: \GG_m \times \bB \to \bB \qquad a_{\lambda}(t,b) = \lambda(t) b \lambda(t)^{-1} \] extends uniquely to a map $a'_{\lambda}: \AA^1 \times \bB \to \bB$, and furthermore $\res{a'_{\lambda}}{0 \times \bB} = i \circ p$ is the retraction of $\bB$ onto $\bT$.
\end{lemma}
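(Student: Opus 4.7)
The plan is to use the semidirect product decomposition $\bB = \bT \ltimes \bN$ to reduce the statement to an explicit computation in the unipotent radical $\bN$, where the root subgroup description makes the claim transparent.

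First, I would observe that every element of $\bB$ factors uniquely as $b = t_0 u$ with $t_0 \in \bT$ and $u \in \bN$, and the multiplication map gives an isomorphism of schemes $\bT \times \bN \isom \bB$. Since $\bT$ is commutative and $\bN$ is normal, for any $t \in \GG_m$ we have
\[ a_{\lambda}(t, t_0 u) = \lambda(t) t_0 u \lambda(t)^{-1} = t_0 \cdot \bigl( \lambda(t) u \lambda(t)^{-1} \bigr), \]
so it suffices to extend the conjugation action on $\bN$ alone to $\AA^1 \times \bN \to \bN$ and verify that the fiber at $t=0$ is the constant map to $1_{\bN}$. Under the projection $p: \bB \to \bT$, the element $t_0 u$ is sent to $t_0$, so the claimed identification $\res{a'_\lambda}{0 \times \bB} = i \circ p$ follows immediately from the vanishing of the $\bN$-component.

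Next, fix an ordering of the positive roots $\alpha_1, \ldots, \alpha_N$ (determined by $\bB$); the product-of-root-subgroups map $\prod_i U_{\alpha_i} \to \bN$ (where each $U_{\alpha_i} \isom \AA^1$ via a root homomorphism $u_{\alpha_i}$) is an isomorphism of schemes. The defining property of root subgroups gives
\[ \lambda(t) \cdot u_{\alpha_i}(x) \cdot \lambda(t)^{-1} = u_{\alpha_i}\bigl(t^{\langle \alpha_i, \lambda \rangle} x\bigr). \]
Strict dominance of $\lambda$ means $\langle \alpha_i, \lambda \rangle > 0$ for every positive root $\alpha_i$, so each coordinate of the conjugation map is a \emph{polynomial} in $t$ with no constant term. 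Hence the action $\GG_m \times \bN \to \bN$ extends (uniquely, since $\GG_m \subset \AA^1$ is schematically dense and $\bN$ is separated) to a morphism $\AA^1 \times \bN \to \bN$, and at $t = 0$ the positive-power monomials vanish, sending every $u \in \bN$ to the identity. Combining with the first paragraph yields both the extension $a'_\lambda$ and the identification of its fiber at $0$ with $i \circ p$.

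There is no genuine obstacle: the only subtlety is to check that \emph{strict} dominance (rather than mere dominance) is what ensures all exponents are strictly positive, so that every monomial in $t$ really does vanish at $t=0$. Uniqueness of the extension is automatic from separatedness of $\bB$ and density of $\GG_m$ in $\AA^1$.
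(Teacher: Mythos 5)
Your proof is correct. The paper does not give a full proof here; it simply cites \cite[Prop.~8.4.5, Ex.~8.4.6(5)]{Springer} and displays the $\mathbf{SL_2}$ matrix computation as a ``representative case,'' leaving the general argument to the reference. Your argument is a self-contained proof of the same statement, and the mechanism is exactly the systematic version of what that $\mathbf{SL_2}$ example illustrates: decompose $\bB = \bT \ltimes \bN$, reduce to conjugation on $\bN$, and use the product-of-root-subgroups parametrization $\prod_i U_{\alpha_i} \isom \bN$ together with the weight formula $\lambda(t)\, u_{\alpha_i}(x)\, \lambda(t)^{-1} = u_{\alpha_i}\bigl(t^{\langle \alpha_i, \lambda\rangle} x\bigr)$ to see that all coordinates are monomials in $t$ of strictly positive degree (this is where strict dominance enters), hence extend over $t = 0$ and vanish there. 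The points you handle are exactly the ones that matter: (a) commutativity of $\bT$ to strip off the torus factor, (b) the fact that conjugation preserves the chosen product-of-root-subgroups chart so the formula really is coordinatewise, and (c) uniqueness of the extension via density of $\GG_m$ in $\AA^1$ and separatedness. The only thing you gain by your route is self-containment; the only thing the paper gains is brevity.
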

\begin{proof} See \cite[Prop.~8.4.5, Ex.~8.4.6(5)]{Springer}.  A representative case is the easy matrix computation for $\mathbf{SL_2}$: 
  \[ \lambda(t) \begin{pmatrix} a & b \\ 0 & c \end{pmatrix} \lambda(t)^{-1} = \begin{pmatrix} a & b t^{\langle \lambda, r_{+} \rangle} \\ 0 & c \end{pmatrix}. \qedhere \]
\end{proof}

\begin{remark} If $\bH$ entered into $\omega_{*,\bH}(X)$ only through (the $\AA^1$-homotopy type of) $B\bH$ then Lemma~\ref{lem:bTbB} would be easy (like in topology), and an analogous statement would hold more generally for the map to the reductive quotient $\bH \to \bH_{\text{red}}$ for any linear algebraic $\bH$.  The above proofs generalize, using the special form of the Levi decomposition, to show that this holds if $\bH$ is a parabolic subgroup of a reductive group.  We don't know if it holds in general.
\end{remark}

\subsection{Reminders on $\omega_{*,\bT}(X)$}
We want to emphasize the following Corollary of computations in \cite{LP}, parts of which are not stated in precisely the form we need there:
\begin{lemma}\label{lem:bt}\mbox{} \begin{enumerate}
  \item There is an isomorphism of $\LL$-modules
    \[ \omega_{*,\bT} \isom \bigoplus_{m_1,\ldots,m_r \geq 0} p_{m_1,\ldots,m_r} \cdot \LL \] where
    \[ p_{m_1,\ldots,m_r} = [\PP^{m_1} \times \cdots \PP^{m_r}, (\O_{\PP^{m_1}}(1), \cdots, \O_{\PP^{m_r}}(1))]. \]
  \item The pairing $\ip{-}{-}_\bT= \LL\ps{\oh{\bT}} \otimes \omega_{*,\bT} \to \LL$ is a perfect pairing of $\LL$-modules, in the sense that it induces $\LL\ps{\oh{\bT}} = (\omega_{*,\bT})^\dual$.
  \item The natural map
    \[ \gamma^X_{\bT}: \omega_{*,\bT} \otimes_{\LL} \omega_{*}(X)  \longrightarrow  \omega_{*,\bT}(X) \] is an isomorphism of $\LL$-modules.
  \item Suppose $x \in \omega_{*,\bT}(X)$.  Then, $x = 0$ iff $\ip{A}{x}_\bT = 0$ for all $A \in \LL\ps{ \oh{\bT} }$.
\end{enumerate}
\end{lemma}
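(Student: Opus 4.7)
Parts (i) and (iii) are direct specializations of results already proved in \cite{LP}: part (iii) is Theorem~\ref{thm:LP}(i) applied to $\bG = \GG_m^r$, and part (i) is the explicit basis calculation carried out there for $\omega_{*,\GG_m^r}$. So the only real work is deducing (ii) from this explicit model, after which (iv) is essentially formal.

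For part (ii), I would fix a basis $\lambda_1,\ldots,\lambda_r$ of $\oh{\bT}$ so that $\LL\ps{\oh{\bT}} \cong \LL\ps{t_1,\ldots,t_r}$ has topological $\LL$-basis given by the monomials $t^n = t_1^{n_1}\cdots t_r^{n_r}$, indexed by $n = (n_1,\ldots,n_r) \in \NN^r$. Using that $\c_\bT$ is compatible with external products (Cor.~\ref{cor:opsBG} and \ref{na:char-prod}), the pairing factors as
\[
\ip{t^n}{p_m}_{\bT} \;=\; \prod_{i=1}^{r} \pi_*\bigl( c_1(\O_{\PP^{m_i}}(1))^{n_i} \cap [\PP^{m_i}] \bigr) \;\in\; \LL.
\]
Each factor is a standard one-variable computation: it equals $1 \in \LL_0$ when $n_i = m_i$, lies in the positively graded piece $\LL_{m_i - n_i} \subset \mm$ when $n_i < m_i$, and vanishes when $n_i > m_i$ (since the resulting class would sit in a negative-degree piece of $\LL$, which is zero). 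Thus the matrix of the pairing, indexed by multi-indices in any total order refining the coordinatewise order on $\NN^r$, is block-triangular with $1$'s on the diagonal. Combined with (i), this identifies the adjoint map $\LL\ps{\oh{\bT}} \to \Hom_\LL(\omega_{*,\bT}, \LL)$ with the canonical isomorphism $\prod_m \LL \cdot t^m \isom \prod_m \LL \cdot p_m^{\vee}$ after base-changing by the triangular matrix, which is invertible.

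For part (iv), combine (i) and (iii) to write every $x \in \omega_{*,\bT}(X)$ uniquely as a (possibly infinite, but finite in each degree) sum $x = \sum_m y_m \cdot p_m$ with $y_m \in \omega_*(X)$. The pairing is $\omega_*(X)$-linear in its second slot, so $\ip{t^n}{x}_\bT = \sum_m y_m \cdot \ip{t^n}{p_m}_\bT$, and the same block-triangular structure allows one to solve inductively on $n$ (in the coordinatewise order) for each $y_m$ in terms of $\ip{t^n}{x}_\bT$ for $n \leq m$. Vanishing of all pairings therefore forces each $y_m = 0$, hence $x = 0$.

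The main technical content is the block-triangular computation in part (ii); the (mildly) subtle point, relative to the analogous Chow-theoretic story, is that the off-diagonal entries $\pi_*(c_1(\O_{\PP^m}(1))^n \cap [\PP^m])$ for $n < m$ are genuine nonzero elements of $\mm \subset \LL$ (rather than vanishing as in Chow), so one must argue via block-triangularity with nilpotent off-diagonal, not via strict diagonality.
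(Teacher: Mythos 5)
Your proposal is correct and follows essentially the same route as the paper: credit (i) and (iii) to \cite{LP} (via the translation to $\GG_m^r$-bundles), establish block-triangularity of the pairing matrix $\bigl(\ip{t^n}{p_m}_\bT\bigr)$ with $1$'s on the diagonal via the product-of-projective-spaces computation, and read off (ii) and (iv) from that. The paper's proof of (iii) also offers the observation you make (that (ii) gives ``coefficient-solving'' elements $d_{m'}$ dual to the $p_m$ and hence an explicit inverse to $\gamma^X_\bT$), so even that alternative is aligned. One small caution on wording: the off-diagonal entries land in $\mm = \LL_{\geq 1}$, which is not a nilpotent ideal, so ``nilpotent off-diagonal'' is loose --- what you actually use is either (a) that the $y_m$ in $x = \sum_m y_m\, p_m$ have finite support (so one descends from the maximal nonzero $m$), or (b) the graded-Nakayama mechanism of Lemma~\ref{lem:grNAK}; either fixes the phrasing without changing the substance.
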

\begin{proof}\begin{enumerate}
  \item This follows from the more explicit $\QQ$-basis given in \cite{LP}.  Let us indicate the necessary translation: Given $(m_1,\ldots,m_r)$ and a partition $\lambda'$ indexing a basis element of $\LL$, take the ``partition list'' $(\lambda, (m_1,\ldots,m_r))$ where $\lambda = \lambda' \cup (m_i)$.  Note that \cite{LP} shows that these elements generate integrally and are a basis (in particular, independent) rationally.
  \item  We can compute
    \begin{align*} \ip{\lambda_1^{m'_1} \cdots \lambda_r^{m'_r}}{p_{m_1, \ldots, m_r}}_\bT &= c_1\left(\O_{\PP^{m_1}}(1)\right)^{m'_1} \circ \cdots \circ c_1\left(\O_{\PP^{m_r}}(1)\right)^{m'_r} \cap [\PP^{m_1} \times \cdots \times \PP^{m_r}] \\ & = [\PP^{m_1-m'_1} \times \cdots \times \PP^{m_r-m'_r}] \end{align*}  So the matrix for the pairing is block triangular (vanishes if any $m'_i > m_i$), with ones on the diagonal (where all $m'_i=m_i$).
   \item Surjectivity was established in \cite{LP} along the way to proving Theorem~3 there.  Injectivity follows by a Chern-operation argument, which can be extracted from the part of \cite{LP} giving a basis for $\omega_{*,1^r}$.  Alternatively, (ii) implies (by ``coefficient solving'') that there exist unique $d_{n'_1,\ldots,n'_r}\in \LL\ps{\oh{\bT}}$ such that
     \[ \ip{d_{m'_1,\ldots,m'_r}}{p_{m_1,\ldots,m_r}}_\bT = \begin{cases}  1 & \text{if $m'_1=m_1$, \dots, $m'_r=m_r$} \\ 0 & \text{otherwise} \end{cases} \]  This gives rise to an inverse to $\gamma^X_\bT$, since
       \[ \ip{d_{m'_1\ldots,m'_r}}{\gamma^X_\bT(p_{m_1,\ldots,m_r} \otimes x)}_\bT = \ip{d_{m'_1,\ldots,m'_r}}{p_{m_1,\ldots,m_r}}_\bT x  \in \omega_*(X). \] 
     \item Clear from the above. \qedhere
\end{enumerate}
\end{proof}

%%%%%%%%%%%%%%%%%%%%%%%%%%%%%%%%%%%%%%%%%%%%%%%%%

\section{Proof of Rational Results}\label{sec:rational}
Our proof will follow the same general outline as the reduction of $\bGLr$ to $\GG_m^r$ in Lee-Pandharipande.  First, we show surjectivity of the map by considering (after generically finite base-change) sections of the flag bundle associated to $\P$, followed by ``setting the extensions parameters to zero.''  Then, we show injectivity by using suitable characteristic classes.

\subsection{Surjectivity in Theorem~\ref{thm:coinv}}
For surjectivity, we may as well work a bit better than rationally from the start.

\begin{prop}\label{prop:surj} The map $\ind_{\bT}^{\bG}: \omega_{*,\bT}(X)/W \to \omega_{*,\bG}(X)$ of Theorem~\ref{thm:coinv} becomes surjective after tensoring $-\otimes_{\ZZ} \ZZ\otg$.
\end{prop}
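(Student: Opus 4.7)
The plan is to use graded Nakayama to reduce to a statement modulo $\mm$, then exploit the torsion-index control on the flag bundle to produce enough degree-$d_i$ ``sections'' whose greatest common divisor is $\tau_\bG$.

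By Lemma~\ref{lem:grNAK}(i), applied to the graded $\LL\otg$-module homomorphism $(\ind_\bT^\bG)\otg$ between non-negatively graded modules, it suffices to check surjectivity modulo $\mm \,\omega_{*,\bG}(X)\otg$. Since $\omega_{*,\bT}(X) \twoheadrightarrow \omega_{*,\bT}(X)/W$, one may equivalently show that every generator $[\pi: Y \to X, \P]$ of $\omega_{*,\bG}(X)$ lies, modulo $\mm$, in the image of $(\ind_\bT^\bG)\otg$ coming from $\omega_{*,\bT}(X)$.

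Given such a generator, apply Proposition~\ref{prop:BGsect} to the $\bG$-bundle $\P$ on $Y$ to obtain closed subschemes $Z_i \subset (\bG/\bB)_\P$ generically finite of degree $d_i$ over $Y$ with $\gcd(d_i) = \tau_\bG$. Using resolution of singularities in characteristic zero, replace each $Z_i$ with a smooth projective birational cover, producing smooth projective morphisms $f_i : \sq{Z}_i \to Y$ of degree $d_i$ together with lifts $g_i : \sq{Z}_i \to (\bG/\bB)_\P$. On $\sq{Z}_i$, the pulled-back $\bG$-bundle $f_i^*\P$ inherits the $\bB$-reduction $g_i^* \P^\bB$, and Lemma~\ref{lem:bTbB} identifies $[g_i^*\P^\bB] = \ind_\bT^\bB[\P^\bT_i]$ in $\omega_{*,\bB}(\sq{Z}_i)$, where $\P^\bT_i = \ind_{\bB \to \bT} g_i^*\P^\bB$. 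Applying $\ind_\bB^\bG$ (so $f_i^*\P$ reappears as the $\bG$-bundle) and pushing forward along $\pi f_i$ yields
\[ [\pi f_i : \sq{Z}_i \to X,\, f_i^*\P] \;=\; \ind_\bT^\bG\bigl[\pi f_i : \sq{Z}_i \to X,\, \P^\bT_i\bigr] \;\in\; \im (\ind_\bT^\bG). \]

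The final step is to relate this to $d_i[\pi:Y\to X,\P]$. Functoriality of the $\bullet$ operation under projective pushforward (verified in Section~\ref{sec:operations}) gives
\[ [\pi f_i : \sq{Z}_i \to X,\, f_i^*\P] = \pi_*\bigl(((f_i)_*[\sq{Z}_i]) \bullet \P\bigr), \]
and Proposition~\ref{prop:lm}(iv) yields $(f_i)_*[\sq{Z}_i] \equiv d_i[Y] \pmod{\mm\,\omega_*(Y)}$. Combining and using $\LL$-linearity of $\pi_*$ and $\bullet$,
\[ d_i \,[\pi : Y \to X,\, \P] \;\equiv\; \ind_\bT^\bG\bigl[\pi f_i : \sq{Z}_i \to X,\, \P^\bT_i\bigr] \pmod{\mm\,\omega_{*,\bG}(X)}. \]
Since $\gcd(d_i) = \tau_\bG$, a Bezout combination in $\ZZ\otg$ recovers $[\pi:Y\to X,\P]$ modulo $\mm$, concluding the proof via Lemma~\ref{lem:grNAK}.

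The only nontrivial conceptual ingredients are Proposition~\ref{prop:BGsect} (torsion-index control on $(\bG/\bB)_\P$), Lemma~\ref{lem:bTbB} (the $\bB$-vs-$\bT$ identification), and Proposition~\ref{prop:lm}(iv) (degree formula in Chow). The main obstacle is really bookkeeping: ensuring that pullback of $\P^\bB$ by the resolved $g_i$ yields a well-defined $\bB$-bundle and that the functorialities of $\bullet$ and $\ind$ commute with projective pushforward in the manner needed. All of this is formal once Section~\ref{sec:operations} is in place.
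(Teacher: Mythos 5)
Your proof is correct and takes essentially the same route as the paper: reduce by graded Nakayama to a statement modulo $\mm$, use Proposition~\ref{prop:BGsect} to produce degree-$d_i$ generically finite covers of $Y$ inside the flag bundle, resolve singularities, use Lemma~\ref{lem:bTbB} to trade the induced $\bB$-reduction for a $\bT$-bundle, and combine via Bezout after inverting $\tau_\bG$. The only cosmetic difference is that the paper forms the Bezout combination upstairs in $\omega_*((\bG/\bB)_\P)$ and then pushes forward once, whereas you push each $\sq{Z}_i$ down to $X$ first and combine in $\omega_{*,\bT}(X)$; these are equivalent by $\LL$-linearity and functoriality of pushforward.
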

\begin{proof} By Lemma~\ref{lem:grNAK}, it suffices to prove that the cokernel of the induced map $ \sq{\ind_{\bT}^{\bG}}: \sq{\omega}_{*,\bT}(X)/W \longrightarrow \sq{\omega}_{*,\bG}(X) $ is killed by $\tau_\bG$.  (Recall $\sq{\omega} = \omega \otimes_{\LL} \ZZ$ in all variants.)

  Let $[\pi: Y \to X, \P]$ be a generator of $\sq{\omega}_{*,\bG}(X)$.  Now, let's stare at the diagram
%   \[ \xymatrix{
%&  \P^{\bB} = \P \ar[d] \\
%Y_i \ar[d] \ar@/_.1pc/[drr]_(.2){r_i} &  \P^{\bT} = \P/\bN \ar[d]|(.55)\hole\\
%Z_i \ar@{^{(}->}[r]&  (\bG/\bB)_{\P} = \P/\bB \ar[r]_-{\pi} & Y
%  }\]
  \[ \xymatrix{
  &  \P/\bB \ar@{=}[d] & \P/\bN \ar@{=}[d] & \P \subset p^* \P \ar@{=}[d]\\
  Z_i \ar@{^{(}->}[r] & \ar[d]^p (\bG/\bB)_{\P} & \P^{\bT} \ar[l] & \P^{\bB} \subset p^* \P \ar[l] \\ Y_i \ar[u] \ar[r]_{r_i} & Y \ar[d]^{\pi} & &\P\ar[ll] \\ & X
  }\]
  Here, $p: (\bG/\bB)_{\P} \to Y$ is the generalized flag bundle of $\P$; $\P^{\bB} \to (\bG/\bB)_{\P}$ is a $\bB$-bundle, in fact a $\bB$-reduction of $p^* \P$ which is the universal instance of reduction of the structure group of $\P$ to $\bB$; $\P^{\bT} \to (\bG/\bB)_{\P}$ is a $\bT$-bundle, the one obtained from $\P^{\bB}$ by ``setting the extension parameters to zero.''

  By Prop.~\ref{prop:BGsect}, there are $Z_1,\ldots,Z_k \subset (\bG/\bB)_{\P}$ which are generically finite of degree $d_i$, $\gcd d_i = \tau_\bG$, over $Y$.   Let $Y_i$ be a resolution of singularities of $Z_i$, so that $Y_i$ is smooth and the composite $r_i: Y_i \to Z_i \to Y$ is still generically finite of degree $d$.  %Since $r_i$ factors through $(\bG/\bB)_{\P} \to Y$, the pullback $r_i^* \P$ has a natural reduction $\P^{\bB}_i$ of its structure group to $\bB$ (that is, the pullback of $\P^{\bB} \to (\bG/\bB)_{\P}$). 
  By Prop.~\ref{prop:lm}, $[Y_i \to Y] = p_* [Y_i \to (\bG/\bB)_{\P}] = (\deg r_i) [Y] \in \sq{\omega}_*(Y)$.   Taking an appropriate linear combination, we may find $x \in \omega_{*}((\bG/\bB)_{\P})$ such that $p_* x = \tau_\bG [Y] \in \sq{\omega}_*(Y)$.  By Lemma~\ref{lem:bTbB}, $\ind_{\bT}^{\bB}(x \bullet \P^\bT) = x \bullet \P^{\bB}$; since $\P^{\bB}$ was a $\bB$-reduction of $p^* \P$, $\ind_{\bB}^{\bG}(x \bullet \P^{\bB}) = x \bullet p^* \P$.  Consequently,
  \begin{align*} \ind_{\bT}^{\bG} \pi_* p_*(x \bullet \P^\bT) &=  \pi_* p_*\left(\ind_{\bT}^{\bG}\left(x \bullet \P^{\bT}\right) \right) \\ &=\pi_* p_*\left(x \bullet p^* \P\right) = \pi_*\left((p_* x) \bullet \P\right) \\ &= \tau_{\bG} \pi_*([Y] \bullet \P) = \tau_{\bG}[\pi: Y \to X, \P] \in \sq{\omega}_{*,\bG}(X). \qedhere\end{align*}%  from which the Proposition   Lemma~\ref{lem:bTbB}(c), $x \bullet \P^{\bT} \mapsto x \bullet \P^{\bB}$ Taking the image under the natural map of Lemma~\ref{lem:bTbB}(a) and applying $\pi_*$ we find that
  %\[   (\deg r_i) [Y \to X, \P] = [\pi \circ r_i: Y_i \to X, (r_i)^* \P] \] 
  %In light of the gcd condition, it thus suffices to show that each of these right hand terms are in the image of $\sq{\omega}_{*, \bT}(X)$. In light of the reduction of structure group, $[\pi \circ r_i: \sq{Y} \to X, (r_i)^* \P] \in \sq{\omega}_{*,\bG}(X)$ is the image of $[\pi \circ r_i: Y_i \to X, \P^{\bB}_i] \in \sq{\omega}_{*,\bB}(X)$ and by Lemma~\ref{lem:bTbB}(c) in the image of $\sq{\omega}_{*,\bT}(X)$.
\end{proof}

\subsection{Proof of injectivity in Theorem~\ref{thm:coinv}}
%\begin{constr}
%Suppose $V$ is an $r$-dimensional algebraic representation of $G$.  Then, the assignment
%\[ \left[f: Y \to X, \P\right] \quad \mapsto \quad \left[f: Y \to X, V_{\P}\right] \] determines an $\LL$-module homomorphism
%\[ C_V: \omega_{*,\bG}(X) \to \omega_{*,\bGLr}(X) \] We can then compose with the Chern classes $c_r: \omega_{*, \bGLr}(X) \to \omega_{*}(X)$, as constructed in \cite{LP}, thereby defining
%\end{constr}

%Recall the following foundational Lemma of Weyl:
%\begin{lemma} $R(G) \isom R(T)^W$ where $R(G)$ is the representation ring of $G$, and $R(T)^W$ is the Weyl-invariants in the representation ring of $T$.
%\end{lemma} 

%\begin{prop} The map $\Psi_\QQ: \omegaBT(X)_\QQ/W \longrightarrow \omegaBG(X)_\QQ $ of Theorem~\ref{thm:coinv} is injective.
%\end{prop}
\begin{proof}
  %Consider the commutative diagram of Lemma~\ref{lem:dpch}.
  %\[ \xymatrix{
  %\LL_\QQ\ps{ \oh{T} }^W  \ar[r]^-{\ch_{\bG}} \ar[d]_{=} & \Hom_{\LL_{\QQ}}(\omega_{*,\bG}(X)_{\QQ}, \omega_{*}(X)_{\QQ}) \ar[d]\\
  %\LL_\QQ\ps{ \oh{T} }^W \ar[d] \ar[r]^-{\ch_{\bT}} \ar@{^{(}->}[d] & \Hom_{\LL_{\QQ}}((\omega_{*,\bT}(X)_{\QQ})_W, \omega_{*}(X)_{\QQ}) \ar@{^{(}->}[d] \\
  %\LL_\QQ\ps{ \oh{T} } \ar[r]^-{\ch_{\bT}} &\Hom_{\LL_{\QQ}}(\omega_{*,\bT}(X)_{\QQ}, \omega_{*}(X)_{\QQ}) } \]
  Suppose $x \in \omega_{*,\bT}(X)_{\QQ}$ is such that $\Psi(x) = 0 \in \omega_{*,\bG}(X)_{\QQ}$. Since we're working rationally $(\omega_{*,\bT}(X)_{\QQ})^W = (\omega_{*,\bT}(X)_{\QQ})_W $ by averaging, so to prove injectivity it suffices, by Lemma~\ref{lem:bt}, to show that \[ \ip{A}{ \sum_{w \in W} w x }_\bT = \ip{\underbrace{\left(\sum_{w \in W} w \cdot A\right)}_{\in \LL_\QQ\ps{ \oh{T} }^W}}{x}_\bT = 0  \] for all $A \in \LL_\QQ\ps{ \oh{T} }$.   But, the indicated term lies in $\LL_\QQ\ps{ \oh{T} }^W$ and by naturality of the construction we obtain
  \[ \ip{\sum_{w \in W} w \cdot A}{x}_{\bT} = \ip{\sum_{w \in W} w \cdot A}{\Psi(x)}_\bG = 0 \] as desired. 
\end{proof}

\subsection{Proof of Theorem~\ref{thm:main}}
\begin{proof} \mbox{}
  \begin{enumerate} \item Consider the diagram
      \[ \xymatrix{ 
      \omega_{*}(X) \otimes_{\LL} (\omega_{*,\bT})/W \ar[r]^-{\gamma_\bT^X} \ar[d]_{\id \otimes \ind_\bT^\bG} & \omega_{*,\bT}(X)/W \ar[d]^{\ind_\bT^\bG} \\
      \omega_{*}(X) \otimes_{\LL} \omega_{*,\bG} \ar[r]_-{\gamma_\bG^X} & \omega_{*,\bG}(X) } \] By Theorem~\ref{thm:coinv} the vertical arrows become isomorphisms after tensoring with $\QQ$, so that in order to prove that the bottom horizontal arrow becomes an isomorphism it suffices to prove that the top horizontal arrow does so.  The top arrow is gotten by taking Weyl coinvariants of a map which is an isomorphism by Lemma~\ref{lem:bt}, and is thus an isomorphism.
    \item Consider the diagram
      \[ \xymatrix{ 
      (\omega_{*,\bT})/W \ar[d] \ar[r]^-{\vartheta_{\bT}/W} & \MU_*(B\bT(\CC))/W \ar[d] \\
      \omega_{*,\bG} \ar[r]_-{\vartheta_\bG} & \MU_*(B\bG(\CC)) 
      }\]
      Theorem~\ref{thm:coinv} tells us that the left vertical arrow becomes an isomorphism after tensoring with $\QQ$.   The right vertical arrow is rationally an isomorphism by a well-known transfer argument in topology.  The top horizontal arrow is obtained by taking Weyl coinvariants of a map which is an isomorphism by Lemma~\ref{lem:bt}.  It follows that the bottom horizontal arrow is an isomorphism.\qedhere
  \end{enumerate}
\end{proof}

%%%%%%%%%%%%%%%%%%%%%%%%%%%%%%%%%%%%%%%%%%%%%%%%%

\section{Better than rationally}\label{sec:better-than-Q}
\subsection{The retraction}
The argument used to prove surjectivity in Theorem~\ref{thm:coinv} yields the following more precise statement:
\begin{lemma}\label{lem:retract} Suppose $A \in \LL\ps{\oh{\bT}}$, $|A|=d = \dim\bG/\bB$, is such that $p_*(c_{\bT}(A) \cap [\bG/\bB]) = \tau_{\bG} [\pt] \in \sq{\omega}_0(\pt) \isom \CH_0(\pt)$.  Let $\phi: \omega_{*,\bG}(X) \to \omega_{*,\bG}(X)$ denote the composite
  \[ \xymatrix@C=3pc{
  \omega_{*,\bG}(X) \ar[r]^-{(\bG/\bB)_{-}} & \omega_{*+d, \bB}(X) \ar[r]^-{\c_\bT(A) \cap -} & \omega_{*, \bB}(X) \ar[r]^-{\ind_{\bB}^{\bG}} & \omega_{*, \bG}(X) } \] 
  Then, $\phi \equiv \tau_{\bG} \pmod{\mm}$ (i.e., $\phi(x) - \tau_{\bG} x \in \LL_{\geq 1} \omega_{*, \bG}(X)$).

 The same statements holds with $\omega_{*,\bG}$ replaced by $\omega^{Zar}_{*,\bG}$ throughout.
\end{lemma}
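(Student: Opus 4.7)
The plan is to evaluate $\phi$ directly on a generator $[\pi:Y\to X,\P]$ of $\omega_{*,\bG}(X)$ by unwinding the three constituent operations. The operation $(\bG/\bB)_{-}$ produces $[\pi\circ p:(\bG/\bB)_\P\to X,\P^\bB]$; applying $\c_\bT(A)\cap-$ caps with $c_\bT(A,\P^\bT)$ against $[(\bG/\bB)_\P]$ and pushes forward along $\pi\circ p$; and $\ind_\bB^\bG$ replaces the $\bB$-bundle $\P^\bB$ by its associated $\bG$-bundle, which is canonically $p^*\P$. The key structural input is then the compatibility of the $\bullet$-operation with projective pushforward from the proof of Lemma~\ref{lem:op-ch}(ii), namely $p_*(z\bullet p^*\P)=(p_*z)\bullet\P$ for the projective map $p:(\bG/\bB)_\P\to Y$. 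Using this to factor $(\pi\circ p)_*=\pi_*\circ p_*$, the whole statement reduces to computing the class $p_*(c_\bT(A,\P^\bT)\cap[(\bG/\bB)_\P])\in\omega_*(Y)$ modulo $\mm\omega_*(Y)$ and showing it equals $\tau_\bG[Y]$.

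This latter computation is where the hypothesis on $A$ is engineered to enter. By Prop.~\ref{prop:lm}(iii) we may work in $\sq{\omega}_*(Y)\isom\CH_*(Y)$; by a dimension count the class sits in top degree on each component of $Y$, hence is an integer multiple of $[Y]$. I would pin down the integer by base-changing along an arbitrary closed point $y\in Y$ and using compatibility of Chern classes and projective pushforward with Cartesian squares (part of Prop.~\ref{prop:lm}(i)): the fiber of $p$ over $y$ is $\bG/\bB$ equipped with the standard $\bT$-bundle, so the restriction computes $p_*(c_\bT(A)\cap[\bG/\bB])$, which equals $\tau_\bG[\pt]\in\CH_0(\pt)$ by hypothesis. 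Thus $p_*(c_\bT(A,\P^\bT)\cap[(\bG/\bB)_\P])=\tau_\bG[Y]+m$ for some $m\in\mm\omega_*(Y)$.

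Feeding this back through $\pi_*(-\bullet\P)$, which is $\LL$-linear and hence carries $\mm\omega_*(Y)$ into $\mm\omega_{*,\bG}(X)$, yields $\phi([\pi,\P])=\tau_\bG[\pi,\P]+\pi_*(m\bullet\P)\equiv\tau_\bG[\pi,\P]\pmod{\mm\omega_{*,\bG}(X)}$, as required. The Zariski-locally-trivial variant follows by the same argument, since every operation in the composite preserves Zariski-local triviality: $p^*\P$ inherits it from $\P$, and the canonical $\bB$-reduction $\P^\bB$ of $p^*\P$ is Zariski-locally trivial because the map $\bG\to\bG/\bB$ is itself a Zariski-locally trivial $\bB$-bundle (equivalently, $\bB$ is special). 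I expect no serious obstacle; the whole proof is essentially a bookkeeping exercise around the three operations combined with the $\bullet$/pushforward identity, after which the Chow-level reduction to the universal fiber is essentially tautological.
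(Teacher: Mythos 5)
Your proposal is correct and follows the paper's proof essentially step for step: unwind the three operations on a generator, commute $\ind_\bB^\bG$ past the pushforward, use the $\bullet$/pushforward compatibility $p_*(z\bullet p^*\P)=(p_*z)\bullet\P$, and then verify $p_*(c_\bT(A,\P^\bT)\cap[(\bG/\bB)_\P])=\tau_\bG[Y]\in\sq{\omega}_*(Y)$ by passing to Chow (via Prop.~\ref{prop:lm}(iii), a citation the paper itself garbles as (ii)) and computing the integer on a fiber. The only cosmetic difference is that you resolve the top-degree question and base-change to a closed point explicitly, whereas the paper leaves both a little terse, and your remark about the Zariski-locally-trivial case is slightly more spelled out than the paper's one-line dismissal.
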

\begin{proof} Suppose $[\pi: Y \to X, \P] \in \omega_{*,\bG}(X)$.  Let $p: (\bG/\bB)_{\P} \to Y$ be the projection, and $\P^\bB$ the natural $\bB$-reduction of $p^* \P$.  Tracing through the definitions we find
  \begin{align*}
    \phi([\pi: Y \to X, \P]) &= \ind_{\bB}^{\bG} \pi_* p_*\left(\left(c_\bT(A, \P^B) \cap [(\bG/\bB)_\P]\right) \bullet \P^{\bB} \right) \\
    &= \pi_* p_*\ind_{\bB}^{\bG} \left(\left(c_\bT(A, \P^B) \cap [(\bG/\bB)_\P]\right) \bullet \P^{\bB} \right) \\
    &= \pi_* p_*\left(\left(c_\bT(A, \P^B) \cap [(\bG/\bB)_\P]\right) \bullet p^* \P \right) \\
    &= \pi_*\left(p_*\left(c_\bT(A, \P^B) \cap [(\bG/\bB)_\P]\right) \bullet \P \right) 
  \end{align*}  
  Consequently, it suffices to show that $ p_*\left(c_\bT(A, \P^B) \cap [(\bG/\bB)_\P]\right) = \tau_\bG [Y] \in \sq{\omega}_*(Y)$.   By Prop.~\ref{prop:lm}(ii), it suffices to prove the same formula in Chow theory.  It's enough to prove this for each connected component of $Y$, so we are asking for an equality of elements in $\CH_{0}(Y) \isom \ZZ$. Finally, we may check equality after base-change to an \'etale neighborhood trivializing $\P$ (or a closed point).  This reduces us to the case of $p: \bG/\bB \to \pt$, where the equality follows by hypothesis.
%  We first explain each of the $\LL$-module morphisms in more detail:
%  \begin{enumerate}
%    \item
%  \[ (\bG/\bB)_{(-)}: \omega_{*, \bG}(X) \longrightarrow \omega_{*+d,\bB}(X) \qquad\qquad [Y \to X, \P] \longmapsto [(\bG/\bB)_{\P} \to X, \P^{\bB}] \]
%  This is well-defined, since the associated $\bG/\bB$-bundle of a double-point relation $Y \to X \times \PP^1$ will yield a double-point relation $(\bG/\bB)_{\P} \to X \times \PP^1$.
% \item 
%  \[ \omega_{*, \bB}(X) \longrightarrow \omega_{*,\bG}(X) \qquad\qquad [Y \to X, \P] \longmapsto [Y \to X, \bG_{\P}] \]
%  is just the map induced by the inclusion $i: \bB \to \bG$ and is easily seen to respect the double-point relation.
%\item Finally,
%    \[ \c_{\bT}(A): \omega_{*, \bB}(X) \longrightarrow \omega_{*-d,\bB}(X) \qquad\qquad [f: Y \to X, \P] \longmapsto f_* [\c_{\bT}(A,\P) \cap [Y \to Y], \P] \]
%    A word is in order about the above formula: By $[\c_{\bT}(A,\P) \cap [Y \to Y], \P] $, we mean the image of $\c_{\bT}(A, \P) \cap [Y \to Y] \in \omega_{*}(Y)$ under the map $\omega_*(Y) \to \omega_{*,\bB}(Y)$ of Lemma~\ref{lem:bTbB}(a) corresponding to the torsor $\P$.  Here there is something to check in well-definedness.
%    <<TaODO>>
%  \end{enumerate}
%
%  Tracing through the construction, we readily checked that the resulting composite map $\sq{\phi}: \sq{\omega}_{*,\bG}(X) \to \sq{\omega}_{*,\bG}(X)$ is the identity by the condition on $A$.
%
Nothing is changed by requiring $\P$ to be Zariski-locally trivial, so the same holds with $\omega^{Zar}_{*,\bG}$.
\end{proof}

Even though it seems like we've done very little, we can read off some consequences:
\begin{prop}[Theorem~\ref{thm:better-than-Q}(i), part of (iii)]\label{prop:retract} \mbox{}
  \begin{enumerate} \item The composite maps \[ \phi\otg: \omega_{*,\bG}(X)\otg \to \omega_{*,\bG}(X)\otg \qquad \text{and} \qquad \phi'\otg: \omega^{Zar}_{*,\bG}(X)\otg \to \omega^{Zar}_{*,\bG}(X)\otg \] of Lemma~\ref{lem:retract} are isomorphisms.
    \item The natural map $\omega^{Zar}_{*,\bG}(X)\otg  \to \omega_{*,\bG}(X)\otg$ is an isomorphism.
    \item $\omega_{*,\bG}(X)\otg$ is a direct summand of $\omega_{*,\bT}(X)\otg$.
    \item $\omega_{*,\bG}$ is a projective $\LL\otg$-module, and a torsion-free $\ZZ$-module.
  \end{enumerate}
\end{prop}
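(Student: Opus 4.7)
The plan is to derive (i) from Lemma~\ref{lem:retract} by a graded Nakayama argument, and then deduce (ii)--(iv) by purely formal manipulations using Lemma~\ref{lem:bTbB} and Lemma~\ref{lem:bt}(i). The real content is already contained in Lemma~\ref{lem:retract}; everything below is essentially bookkeeping.

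For (i), Lemma~\ref{lem:retract} gives $\phi \equiv \tau_{\bG}\cdot\id \pmod{\mm}$, and the same for $\phi'$. After inverting $\tau_{\bG}$, the endomorphism $\varphi = \tau_{\bG}\cdot\id$ of the non-negatively graded $\LL\otg$-module $\omega_{*,\bG}(X)\otg$ is an honest isomorphism whose reduction modulo $\mm\otg$ coincides with that of $\phi\otg$. Lemma~\ref{lem:grNAK}(ii) then forces $\phi\otg$ itself to be an isomorphism; the argument for $\phi'\otg$ is identical.

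For (ii), the key observation is that $\bB$-bundles are always Zariski-locally trivial (as $\bB$ is special), so $\ind_{\bB}^{\bG}$ factors canonically through $\omega^{Zar}_{*,\bG}(X)$. Consequently $\phi$ admits a natural factorization $\omega_{*,\bG}(X) \xrightarrow{\tilde\phi} \omega^{Zar}_{*,\bG}(X) \xrightarrow{j} \omega_{*,\bG}(X)$, where $j$ is the comparison map, and similarly $\phi' = \tilde\phi \circ j$. Part (i) shows that both $j\circ\tilde\phi = \phi\otg$ and $\tilde\phi\circ j = \phi'\otg$ are isomorphisms after inverting $\tau_{\bG}$, so $j\otg$ is both split surjective and split injective, hence an isomorphism.

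For (iii), I would combine the factorization of $\phi$ with the isomorphism $\ind_{\bT}^{\bB}$ of Lemma~\ref{lem:bTbB} to rewrite $\phi\otg$ as a composite
\[ \omega_{*,\bG}(X)\otg \xrightarrow{f} \omega_{*+d,\bT}(X)\otg \xrightarrow{g} \omega_{*,\bG}(X)\otg, \]
with $f = (\ind_{\bT}^{\bB})^{-1}\circ (\bG/\bB)_{-}$ and $g = \ind_{\bB}^{\bG}\circ (\c_{\bT}(A)\cap -)\circ \ind_{\bT}^{\bB}$, so that $g\circ f = \phi\otg$ is an isomorphism. Then $f$ is split injective with retraction $(\phi\otg)^{-1}\circ g$, exhibiting $\omega_{*,\bG}(X)\otg$ as a direct summand of $\omega_{*,\bT}(X)\otg$ (up to the cosmetic grading shift by $d$). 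Specializing to $X=\pt$ yields (iv): by Lemma~\ref{lem:bt}(i), $\omega_{*,\bT}\otg$ is a free $\LL\otg$-module, so its direct summand $\omega_{*,\bG}\otg$ is $\LL\otg$-projective and torsion-free as a $\ZZ\otg$-module (since $\LL\otg$ itself is $\ZZ\otg$-torsion-free). The only mild subtlety anywhere is checking the grading hypotheses of Lemma~\ref{lem:grNAK}(ii), and these are immediate from the non-negativity of the grading on double-point cobordism.
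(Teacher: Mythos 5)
Your proof is correct and follows the same strategy as the paper's: graded Nakayama (Lemma~\ref{lem:grNAK}(ii) with $\varphi=\tau_{\bG}\cdot\id$) for part~(i), then formal retraction arguments from the factorization of $\phi$ through $\omega^{Zar}_{*,\bG}$ and through $\omega_{*,\bB}\isom\omega_{*,\bT}$ for parts~(ii)--(iv). If anything you spell out the two-sided-inverse argument for~(ii) and the split-injective-with-retraction $(\phi\otg)^{-1}\circ g$ for~(iii) more explicitly than the paper, which only gestures at the retraction diagram; there is no difference of substance.
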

\begin{proof} By definition of $\tau_\bG$, there exists $A \in \LL\ps{\oh{\bT}}$ satisfying the hypothesis of Lemma~\ref{lem:retract}.  So, claim (i) follow by combining Lemma~\ref{lem:retract} and Lemma~\ref{lem:grNAK} applied to $\phi$.

Since the map of (i) is an isomorphism, the diagram of Lemma~\ref{lem:retract} displays $\omega_{*,\bG}(X)\otg$ (and $\omega^{Zar}_{*,\bG}(X)\otg$) as a retract of $\omega_{*, \bB}(X)\otg \isom \omega_{*,\bT}(X)\otg$ (Lemma~\ref{lem:bTbB}); this proves (iii).  Analogously, this exhibits the morphism $\omega_{*,\bG}(X)\otg \to \omega^{Zar}_{*,\bG}(X)\otg$ as a retract of the isomorphism $\omega^{Zar}_{*,\bB}(X)\otg = \omega_{*,\bB}(X)\otg$, proving (ii).

Finally, $\omega_{*,\bT}\otg$ is a free $\LL\otg$-module, in particular a torsion-free $\ZZ$-module, by Lemma~\ref{lem:bt}.  So any direct summand of it is a projective $\LL\otg$-module and a torsion-free $\ZZ$-module.
\end{proof} 

\begin{prop}[Theorem~\ref{thm:better-than-Q}(ii)]  The map
  \[ \gamma^X_{\bG}\otg: \omega_*(X)\otimes_\LL \omega_{*,\bG}\otg \longrightarrow \omega_{*,\bG}(X)\otg \] is an isomorphism of $\LL\otg$-modules.
\end{prop}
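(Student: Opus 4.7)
My plan is to transfer the known torus isomorphism $\gamma^X_\bT\otg$ (Lemma~\ref{lem:bt}(iii), which continues to hold verbatim after inverting $\tau_\bG$) across the split retraction from $\omega_{*,\bT}$ to $\omega_{*,\bG}$ produced by Lemma~\ref{lem:retract}. Fix $A \in \LL\ps{\oh{\bT}}$ as in Lemma~\ref{lem:retract}, and for $Y \in \{\pt, X\}$ let
\[ F_Y = (\ind_\bT^\bB)^{-1}\circ(\c_\bT(A)\cap -)\circ(\bG/\bB)_{-}\colon \omega_{*,\bG}(Y)\otg \longrightarrow \omega_{*,\bT}(Y)\otg, \qquad H_Y = \ind_\bT^\bG. \]
Lemma~\ref{lem:retract} combined with Lemma~\ref{lem:bTbB} gives $H_Y\circ F_Y = \phi_Y\otg$, an isomorphism (Prop.~\ref{prop:retract}(i)); in particular $F_Y$ is a split monomorphism with left inverse $\phi_Y^{-1}\circ H_Y$, so that $\id\otimes F_\pt$ is also a split monomorphism with left inverse $\id\otimes(\phi_\pt^{-1}\circ H_\pt)$.

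The next step is to fit these maps into the commutative diagram
\[
\xymatrix{
\omega_*(X)\otimes_{\LL}\omega_{*,\bG}\otg \ar[r]^-{\gamma^X_\bG\otg} \ar[d]_{\id\otimes F_\pt}
  & \omega_{*,\bG}(X)\otg \ar[d]^{F_X} \\
\omega_*(X)\otimes_{\LL}\omega_{*,\bT}\otg \ar[r]^-{\gamma^X_\bT\otg}_-{\sim} \ar[d]_{\id\otimes H_\pt}
  & \omega_{*,\bT}(X)\otg \ar[d]^{H_X} \\
\omega_*(X)\otimes_{\LL}\omega_{*,\bG}\otg \ar[r]^-{\gamma^X_\bG\otg}
  & \omega_{*,\bG}(X)\otg
}
\]
whose middle row is an isomorphism. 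Commutativity of the lower square is essentially by inspection: on a generator $[\pi]\otimes[Y',\P'^\bT]$, both paths yield $[\pi\circ p_1\colon Y\times Y'\to X,\, p_2^*(\bG)_{\P'^\bT}]$, using that associated bundle formation commutes with pullback. The upper square is more delicate: on a generator $[\pi]\otimes[Y',\P']$ one must unpack how $(\bG/\bB)_{-}$ and $\c_\bT(A)\cap-$ act on $p_2^*\P'$, using the base-change property from Lemma~\ref{lem:op-flag} (so that $(\bG/\bB)_{p_2^*\P'} = Y\times(\bG/\bB)_{\P'}$ with $\bB$-reduction pulled back from $Y'$) together with the exterior product compatibility of Chern operations (\ref{na:char-prod}) to peel off the $[Y]$-factor.

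Granted the diagram, the conclusion is formal. For surjectivity, given $y\in\omega_{*,\bG}(X)\otg$, the lower square shows that
\[ u = (\id\otimes H_\pt)\circ(\gamma^X_\bT\otg)^{-1}\circ F_X\circ\phi_X^{-1}(y) \]
satisfies $\gamma^X_\bG\otg(u) = H_X\bigl(F_X(\phi_X^{-1}(y))\bigr) = \phi_X(\phi_X^{-1}(y)) = y$. For injectivity, if $\gamma^X_\bG\otg(u) = 0$ then the upper square gives $\gamma^X_\bT\otg((\id\otimes F_\pt)(u)) = 0$; invertibility of the middle row forces $(\id\otimes F_\pt)(u) = 0$, and the split monomorphism property yields $u = 0$.

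The only step I expect to require real care is the verification of commutativity of the upper square on generators, since it chains together definitions from \S\ref{sec:operations} and the exterior product formulas of \S\ref{sec:charBG}; it is a tedious unpacking but conceptually routine, with no new ideas beyond what is already in place. The rest of the argument is a short diagram chase.
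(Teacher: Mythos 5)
Your proof is correct and follows essentially the same approach as the paper: both exhibit $\gamma^X_{\bG}\otg$ as a (generalized) retract of the known isomorphism $\gamma^X_{\bT}\otg \simeq \gamma^X_{\bB}\otg$, using the retraction $\phi$ of Lemma~\ref{lem:retract} applied once at $X$ (right column) and once at $\pt$ tensored with $\omega_*(X)$ (left column), then concluding from invertibility of $\phi\otg$ (Prop.~\ref{prop:retract}(i)). The only differences are cosmetic --- you route through $\omega_{*,\bT}$ via $\ind_{\bT}^{\bB}$ where the paper keeps $\omega_{*,\bB}$ in the middle row, and you spell out the diagram chase explicitly (and helpfully flag that commutativity of the upper square is where the real content lies, a point the paper passes over silently).
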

\begin{proof} Consider the diagram
  \[\xymatrix{
  \omega_*(X)\otimes_\LL \omega_{*,\bG}\otg \ar[r]^-{\gamma^X_{\bG}} \ar[d]&  \omega_{*,\bG}(X)\otg  \ar[d] \\
  \omega_*(X)\otimes_\LL \omega_{*,\bB}\otg \ar[r]^-{\gamma^X_{\bB}} \ar[d]&  \omega_{*,\bB}(X)\otg  \ar[d] \\
  \omega_*(X)\otimes_\LL \omega_{*,\bG}\otg \ar[r]^-{\gamma^X_{\bG}}&  \omega_{*,\bG}(X)\otg  } \]
  where the right-hand vertical maps are just those of Lemma~\ref{lem:retract} for $X$, and the left-hand vertical maps are those of Lemma~\ref{lem:retract} for $X = \pt$ tensored with $\omega_*(X)$.  By Prop.~\ref{prop:retract}, the diagram exhibits $\gamma^X_{\bG}$ as a retract of $\gamma^X_{\bB}$.  Combining Lemma~\ref{lem:bTbB} and Lemma~\ref{lem:bt}(ii) shows that $\gamma^X_{\bB}$, and consequently its retract $\gamma^X_{\bG}$, is an isomorphism.
\end{proof}

\subsection{Completing the proof}
In order to relate $\omega_{*,\bT}\otg$ and $\omega_{*,\bG}\otg$, we will use ``better-than-rationally'' characteristic classes and in particular the ``operation'' $c_{\bG}$.
%\begin{lemma}\label{lem:dpch2}
%For $A \in \LL\otg\ps{\oh{\bT}}^W$, the assignment \[ [f: Y \to X, \P] \mapsto f_*\left(\c_{\bG}(A) \cap [Y \to Y]\right) \] descends to a well-defined $\LL\otg$-module homomorphism $\ch_{\bG}(A): \omega^{Zar}_{*,\bG}(X)_\QQ \to \omega_{*}(X)_\QQ$.
%
%  The assignment $A \mapsto \ch_{\bG}(A)$ determines an $\LL\otg$-module homomorphism
%  \[ \ch_{\bG}: \LL\otg\ps{ \oh{\bT} }^W \longrightarrow \Hom_{\LL\otg}(\omega^{Zar}_{*,\bG}(X)\otg, \omega_{*}(X)\otg) \] which is natural with respect to $\bG$ (e.g., consider the inclusion $\bT \hookrightarrow \bG$).
%\end{lemma}
%\begin{proof} Follows from Corollary~\ref{corr:charBG2}.
%\end{proof}

\begin{proof}[Completion of proof of Theorem~\ref{thm:better-than-Q} and Prop.~\ref{prop:direct-summand}]
  It remains to identify the $\LL\otg$-module dual of $\omega_{*,\bG}\otg$.  By Prop.~\ref{prop:surj},\[  \omega_{*,\bT}\otg/W \twoheadrightarrow \omega_{*,\bG}\otg \] and consequently
  \[ \left(\omega_{*,\bG}\otg\right)^\dual \hookrightarrow \left(\omega_{*,\bT}\otg/W \right)^\dual = \left(\left(\omega_{*,\bT}\otg\right)^\dual\right)^W \] (where $M^\dual = \Hom_{\LL\otg}(M, \LL\otg)$).

  Using $\c_\bT$ and $\c_\bG$ (Section~\ref{sec:operations}) for $X = \pt$ --- or more accurately the pairings $\varepsilon \c_\bT = \ip{-}{-}_\bT$ and $\varepsilon \c_\bG = \ip{-}{-}_\bG$ derived from them --- and the compatibility between them, we obtain the commutative diagram
  \[ \xymatrix{
  \LL\otg\ps{\oh{\bT}}^W \ar[r]^{\varepsilon \c_{\bG}} \ar[dr]_{\varepsilon \c_{\bT}} & \left(\omega_{*,\bG}\otg\right)^\dual  \ar@{^{(}->}[d] \\ &\left(\left(\omega_{*,\bT}\otg\right)^\dual\right)^W  } \]  By Lemma~\ref{lem:bt}(iv), the diagonal arrow is an isomorphism.  So, the vertical arrow is a surjection and thus an isomorphism.  This completes the proof of Theorem~\ref{thm:better-than-Q}.

  Since $\omega_{*,\bG}\otg$ is a projective $\LL\otg$-module (Prop.~\ref{prop:retract}(iv)), it injects into its double dual.  We thus obtain a commutative diagram
  \[ \xymatrix{
  \omega_{*,\bT}\otg \ar@{->>}[r] \ar[d] & \omega_{*,\bG} \ar@{^{(}->}[d]\\
  \left( \LL\otg\ps{\oh{\bT}}^W  \right)^\dual \ar[r]_-{\sim} & {{\omega_{*,\bG}}^\dual}^\dual }
  \] 

  It follows that $\omega_{*,\bG}$ is isomorphic to the image of $\omega_{*,\bT}\otg$ in $\left( \LL\otg\ps{\oh{\bT}}^W  \right)^\dual$.  But this is precisely the quotient of $\omega_{*,\bT}\otg$ obtained by modding out by the submodule of $x \in \omega_{*,\bT}\otg$ for which $\ip{A}{x}_{\bT} = 0$ for all $A \in \LL\otg\ps{\oh{\bT}}^W$.  This proves Prop.~\ref{prop:direct-summand}(i).

  To prove Prop.~\ref{prop:direct-summand}(ii), we combine the surjectivity result of Prop.~\ref{prop:surj}, the rational results of Theorem~\ref{thm:coinv}, and the assertion that $\omega_{*,\bG}\otg$ is torsion-free  (Prop.~\ref{prop:retract}(iv)) to conclude that there is a commutative diagram
  \[ \xymatrix{ \left(\omega_{*,\bT}\otg/W\right)/\text{torsion} \ar@{^{(}->}[r] \ar@{->>}[d] & (\omega_{*,\bT})_\QQ/W \ar[d]^{\sim} \\
  \omega_{*,\bG}\otg \ar@{^{(}->}[r] & (\omega_{*,\bG})_\QQ } \] which implies that $\omega_{*,\bG}\otg \isom \left(\omega_{*,\bT}\otg/W\right)/\text{torsion}$.
\end{proof}

\begin{remark}\label{rmk:depends} There is an alternative way to carry out the previous proof that removes the dependence on $\S$\ref{ss:charBG-2}, and in particular on a better-than-rational $\c_\bG$, by leveraging the rational statements of Theorem~\ref{thm:coinv} and the torsion-freeness of Prop.~\ref{prop:retract}(iv).\footnote{If the reader makes it to the proof of Corr.~\ref{corr:charBG2}, they will find that in fact this sort of argument, using torsion-freeness to leverage a rational argument, was actually used there.  So in a sense this amounts to shifting this up a level in our constructions (doing it after passing to cobordism, rather than before), making things a tiny bit easier at the cost of no longer getting the results of $\S$\ref{ss:charBG-2} as pleasant side-effects.  This is analogous to how Lemma~\ref{lem:retract} is a cobordism analog of the $\ZZ\otg$ form of Lemma~\ref{lem:equaliz}.}
  
Alternate proof:  By Theorem~\ref{thm:coinv} and torsion-freeness of $\omega_{*,\bG}\otg$, we obtain that the kernel of $\omega_{*,\bT}\otg \twoheadrightarrow \omega_{*,\bG}\otg$ is precisely the set of $x \in \omega_{*,\bG}\otg$ s.t. $\ip{A}{x} = 0 \in \LL_\QQ$ for all $A \in \LL_\QQ\ps{\oh{\bT}}^W$.  Using torsion freeness, and the vanishing of high enough filtration elements on any fixed $x$, we see that this is equivalent to $\ip{A}{x}=0$ for all $A \in \LL\otg\ps{\oh{\bT}}^W$.  This proves Prop.~\ref{prop:direct-summand}(i).  

By the rational results, the kernel of $\omega_{*,\bT}\otg/W \twoheadrightarrow \omega_{*,\bG}\otg$ is torsion. Since $\LL\otg$ is torsion-free, the surjection induces an isomorphism on $\LL\otg$-duals.  Combining with Lemma~\ref{lem:bt}, this proves Theorem~\ref{thm:better-than-Q}(iii) by Lemma~\ref{lem:bt}.
\end{remark}

\appendix

\bibliography{note}

\end{document}